\renewcommand{\d}{\mathrm d}
\newcommand{\R}{\mathbb R}
\newcommand{\Z}{\mathbb Z}
\newcommand{\wt}{\widetilde}
\newcommand{\wh}{\widehat}
\renewcommand{\Re}{\operatorname{Re}}
\renewcommand{\Im}{\operatorname{Im}}
\newcommand{\Ai}{\operatorname{Ai}}
\newcommand{\C}{\mathcal{C}}
\newcommand{\D}{\mathcal{D}}
\newcommand{\id}{\mathbbm{1}}
\renewcommand{\O}{\mathcal{O}}
\renewcommand{\P}{\mathbf P}
\newcommand{\E}{\mathbf E}
\newcommand{\I}{\mathrm i}
\DeclareMathOperator*{\sgn}{sgn}
\newtheorem{proposition}{Proposition}[section]
\newtheorem{theorem}[proposition]{Theorem}
\newtheorem{lemma}[proposition]{Lemma}
\newtheorem{definition}[proposition]{Definition}
\newtheorem{corollary}[proposition]{Corollary}
\theoremstyle{definition}
\newtheorem{remark}[proposition]{Remark}
\newtheorem{conj}[proposition]{Conjecture}
\numberwithin{equation}{section}
\author{Patrik L.\ Ferrari\thanks{Institute for Applied Mathematics, Bonn University, Endenicher Allee 60, 53115 Bonn,
Germany. E-mail: {\tt ferrari@uni-bonn.de}} \and
B\'alint Vet\H o\thanks{Institute for Applied Mathematics, Bonn University, Endenicher Allee 60, 53115 Bonn, Germany;
MTA--BME Stochastics Research Group, Egry J.\ u.\ 1, 1111 Budapest, Hungary. E-mail: {\tt vetob@math.bme.hu}}}
\title{Tracy--Widom asymptotics for \mbox{$q$-TASEP}}
\date{}
\begin{document}
\maketitle
\sloppy

\begin{abstract}
We consider the \mbox{$q$-TASEP} that is a $q$-deformation of the totally asymmetric simple exclusion process (TASEP) on $\Z$ for $q\in[0,1)$ where the jump rates depend on the gap to the next particle.
For step initial condition, we prove that the current fluctuation of \mbox{$q$-TASEP} at time $\tau$ is of order $\tau^{1/3}$ and asymptotically distributed as the GUE Tracy--Widom distribution, which confirms the KPZ scaling theory conjecture.

\noindent {\bf Key words and phrases:} interacting particle systems, KPZ universality class, $q$-TASEP, step initial condition, asymptotic current fluctuation, GUE Tracy--Widom distribution

\noindent {\bf MSC classes:} 60K35, 60B20
\end{abstract}

\section{Introduction}

The totally asymmetric simple exclusion process (TASEP) is the simplest non-reversible stochastic interacting particle system on $\Z$.
Particles try to jump to the right by $1$ according to independent Poisson processes with rate $1$ and the jumps are allowed only if the target site is empty (due to the exclusion constraint).
There has been a lot of studies around this model (and its discrete time versions).
For instance, the limiting process for particles positions or for the current fluctuations are given by the Airy processes.
This was obtained using determinantal structures of correlation functions~\cite{Jo03b,BFPS06,BF07,Sas05}.

The \mbox{$q$-TASEP} is a generalization of TASEP defined as follows.
For a parameter $q\in[0,1)$, the jumps of the particles on $\Z$ are still independent of each other, but the jump rate is $1-q^{\rm gap}$
where the gap is the number of consecutive vacant sites next to the particle on its right. In the $q=0$ case, \mbox{$q$-TASEP} reduces to TASEP.

As a natural generalization of TASEP, the \mbox{$q$-TASEP} also belongs to the Kardar--Parisi--Zhang (KPZ) universality class,
hence by the universality conjecture it is expected to show the characteristic asymptotic fluctuation statistics of the KPZ class.
Indeed, in Theorem~\ref{thm:main} we prove that the large time current fluctuations are governed by the (GUE) Tracy--Widom distribution.
This confirms the KPZ scaling theory conjecture, see also \cite{Spo13a}.

The \mbox{$q$-TASEP} was first introduced by Borodin and Corwin in~\cite{BC11}.
They investigated the $q$-Whittaker 2d growth model which is an interacting particle system in two space dimensions on the space of Gelfand--Tsetlin patterns.
The \mbox{$q$-TASEP} is a Markovian subsystem of this two-dimensional process.
The key idea to study the $q$-Whittaker 2d growth model was to exploit its connection to the $q$-Whittaker process.
This connection was first observed by O'Connell in~\cite{OCon09} for a special case which can be obtained from $q\in[0,1)$ in the $q\to1$ limit in a rather careful way as explained in~\cite{BC11}.
The Macdonald process is above the $q$-Whittaker process in the algebraic hierarchy,
hence from the study of Macdonald processes, explicit formulas could be derived for expectations of relevant observables of the \mbox{$q$-TASEP} for a special class of initial conditions.
In particular for step initial condition, a Fredholm determinant formula was given by Borodin, Corwin and Ferrari in~\cite{BCF12} for the $q$-Laplace transform of the particle position
which is the starting point of the asymptotic analysis performed in this paper and it is cited as Theorem~\ref{thm:starting} below.

The totally asymmetric zero range process with a special choice of rate function which is also known as \mbox{$q$-TAZRP} was first introduced by Sasamoto and Wadati in~\cite{SW98b}.
The duality of \mbox{$q$-TASEP} and \mbox{$q$-TAZRP} was proved in~\cite{BCS12} and as a consequence,
joint moment formulas for multiple particle positions were obtained for \mbox{$q$-TASEP} however they are not of Fredholm determinant form.
An explicit formula for the transition probabilities of \mbox{$q$-TAZRP} with a finite number of particles was recently found in two different ways:
in~\cite{BCPS13} by using the spectral theory for the $q$-Boson particle system and in~\cite{KL13} by using the Bethe ansatz.
In these two papers, the distribution of the left-most particle's position after a fixed time for general initial condition was also characterized.
In~\cite{BC13}, two natural discrete time versions of \mbox{$q$-TASEP} were introduced and Fredholm determinant expressions were proved for the $q$-Laplace transform of the particle positions.
A further extension of \mbox{$q$-TASEP} appears in~\cite{CP13}, the $q$-PushASEP which is yet another integrable particle system, a $q$-generalization of the PushASEP studied in~\cite{BF07}.
An explicit contour integral formula was derived for the joint moment of particle positions,
but it is not completely clear how to turn it into a Fredholm determinant expression even for a single position.

Finally, let us point out the technical issues of this paper which are new and were not present for in the asymptotic analysis of the semi-discrete directed polymer in~\cite{BCF12}.
One difficulty lies in the choice of the contour for the Fredholm determinant: it has to be a steep descent path for the asymptotic analysis but also be such that the extra residues coming from the sine inverse can be controlled.
The contours that we have chosen are shown on Figure~\ref{fig:contours}.
Further, the real part of the function that gives asymptotically the main exponential contribution is periodic in the vertical direction.
The contour for the Fredholm determinant stays within one period, but the contour in the integral representation of the kernel is vertical,
and the convergence of this integral comes from the extra sine factor in the denominator of the integrand.

The paper is organized as follows.
Section~\ref{s:results} contains the main result of the paper with explanation.
The behaviour of the hydrodynamic limit of \mbox{$q$-TASEP} stated in Proposition~\ref{prop:lln} is verified in Section~\ref{s:hydro}.
Theorem~\ref{thm:starting}, the starting formula of our investigations with the necessary notation is given in Section~\ref{s:finite}.
The main result of the paper follows from Theorem~\ref{thm:starting} in two steps:
we prove in Section~\ref{s:qLaplace} that under proper scaling the \mbox{$q$-Laplace} transform converges to the distribution function,
and in Section~\ref{s:asympt} we show that the Fredholm determinant that appears in Theorem~\ref{thm:starting} converges to that of the Airy kernel.
Section~\ref{s:scalingtheory} is devoted to show that the KPZ scaling theory conjecture is satisfied for \mbox{$q$-TASEP}.

\section{Model and main result}\label{s:results}

We start with the definition of the \mbox{$q$-TASEP} with \emph{step initial condition} and further notation.
Let $q\in(0,1)$.
The \mbox{$q$-TASEP} is a continuous time interacting particle system on $\Z$ that consists of the evolution of particles ${\bf X}(\tau)=(X_N(\tau):N\in\Z\mbox{ or }N\in\mathbb N)$ for $\tau\ge0$.
The particles are numbered from right to left.
Each particle $X_N(\tau)$ jumps to the right by $1$ independently of the others with rate $1-q^{X_{N-1}(\tau)-X_N(\tau)-1}$.
The infinitesimal generator of the process is given by
\begin{equation}
(Lf)({\bf x})=\sum_k\left(1-q^{x_{k-1}-x_k-1}\right)\left(f\left({\bf x}^k\right)-f({\bf x})\right)
\end{equation}
where ${\bf x}=(x_N:N\in\Z\mbox{ or }N\in\mathbb N)$ is a configuration of particles such that $x_N<x_{N-1}$ for all $N$ and ${\bf x}^k$ is the configuration where $x_k$ is increased by one.
Note that this only happens when the position $x_{k-1}$ is not at $x_k+1$ and that the dynamics preserves the order of particles.
Step initial condition means that the particles are initially filling the negative integers, i.e., there are only particles with labels $N=1,2,\dots$ and they are initially at $X_N(0)=-N$.

\begin{definition}\label{def:qdigamma}
Fix $q\in(0,1)$.
The infinite \emph{$q$-Pochhammer symbol} is given by
\begin{equation}\label{defqpochhammer}
(a;q)_\infty=\prod_{k=0}^\infty(1-aq^k)
\end{equation}
for any $a\in\mathbb C$.
Let
\begin{equation}\label{defqgamma}
\Gamma_q(z)=(1-q)^{1-z}\frac{(q;q)_\infty}{(q^z;q)_\infty}
\end{equation}
be the \emph{$q$-gamma function}.
Then the \emph{$q$-digamma function} is defined by
\begin{equation}\label{defqdigamma}
\Psi_q(z)=\frac\partial{\partial z}\log\Gamma_q(z).
\end{equation}
\end{definition}

\begin{definition}\label{def:kfa}
Let $q\in(0,1)$ be fixed and choose a parameter $\theta>0$.
To these values, we associate the parameters
\begin{align}
\kappa\equiv \kappa(q,\theta)&=\frac{\Psi'_q(\theta)}{(\log q)^2q^\theta},\label{defkappa}\\
f\equiv f(q,\theta)&=\frac{\Psi'_q(\theta)}{(\log q)^2}-\frac{\Psi_q(\theta)}{\log q}-\frac{\log(1-q)}{\log q},\label{deff}\\
\chi\equiv\chi(q,\theta)&=\frac{\Psi'_q(\theta)\log q-\Psi''_q(\theta)}2.\label{defalpha}
\end{align}
\end{definition}
The parameters $f$ and $\kappa$ describe the global behaviour of the particle system as explained below.
In turns out that explicit formulas are available for them only in terms of the parameter $\theta$ which appears naturally in the asymptotic analysis of the problem.
To keep the notation simple, we will not write the $(q,\theta)$ dependence of $\kappa$, $f$ and $\chi$ in the sequel.
For $\kappa$ and $f$, there are the following series representations
\begin{equation}
\kappa=\sum_{k=0}^\infty \frac{q^k}{(1-q^{\theta+k})^2},\qquad f=\sum_{k=0}^\infty \frac{q^{2\theta+2k}}{(1-q^{\theta+k})^2}.\label{kappafseries}
\end{equation}

The macroscopic picture of \mbox{$q$-TASEP} is as follows.
Due to particle conservation, under hydrodynamic limit, one expects that the (macroscopic) particle density $\rho(t,x)$ satisfies the PDE
\begin{equation}\label{pde}
\frac\partial{\partial t}\rho(t,x)+\frac\partial{\partial x}j(\rho)(t,x)=0
\end{equation}
with initial condition $\rho(0,x)=\id(x<0)$ where $j(\rho)$ is the particle current at density $\rho$.

It was shown in~\cite{BC11} that in the stationary distribution of the \mbox{$q$-TASEP} dynamics, gaps between consecutive particles are i.i.d.\ $q$-geometric random variables with some parameter $\alpha\in[0,1)$, i.e.\ with distribution
\begin{equation}\label{qgeodistr}
\P({\rm gap}=k)=(\alpha;q)_\infty\frac{\alpha^k}{(q;q)_k},\quad k=0,1,2,\ldots.
\end{equation}
The \emph{local stationarity assumption} is that the gaps between particles have locally \mbox{$q$-geometric} distribution with some parameter $\alpha$.
Under this assumption, the macroscopic behaviour of $q$-TASEP can be deduced from the PDE \eqref{pde} as follows.

\begin{proposition}\label{prop:lln}$ $
\begin{enumerate}
\item
Under the local stationarity assumption where the parameter of the $q$-geometric distribution is $\alpha$, the particle density $\rho$ and the particle current $j(\rho)$ are given by
\begin{equation}\label{rhojrho}
\rho=\frac{\log q}{\log q+\log(1-q)+\Psi_q(\log_q\alpha)},\quad j(\rho)=\alpha \rho.
\end{equation}

\item
The function
\begin{equation}\label{solutionPDE}
\rho\left(t,\frac{f-1}{\kappa}t\right)=\frac{\log q}{\log q+\log(1-q)+\Psi_q(\theta)}
\end{equation}
solves the PDE \eqref{pde} with the corresponding $j(\rho)$ given by \eqref{rhojrho} and with $\kappa$ and $f$ defined in \eqref{defkappa}--\eqref{deff} understood as functions of $\theta$.

\item
Fix a $\theta>0$ and assuming local stationarity.
Then (\ref{rhojrho}) with $\alpha=q^\theta$ gives the local particle density $\rho$ at time $\tau$ and position $(f(\theta)-1)\tau/\kappa(\theta)$ as well as the particle current $j(\rho)$ at position $(f(\theta)-1)\tau/\kappa(\theta)$.

\item
Consequently, under the assumption of local stationarity, the law of large numbers
\begin{equation}\label{lln}
\frac{X_N(\tau=\kappa N)}{N} \simeq f-1
\end{equation}
holds for the position of the $N$th particle after time $\kappa N$ as $N\to\infty$.
\end{enumerate}
\end{proposition}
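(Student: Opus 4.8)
The plan is to establish the four items of Proposition~\ref{prop:lln} in sequence, starting from the stationary gap distribution \eqref{qgeodistr} and the conservation law \eqref{pde}.

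\textbf{Item (1).} First I would compute the two elementary quantities attached to the $q$-geometric distribution with parameter $\alpha$: the mean gap $\E[\mathrm{gap}]$ and the mean jump rate $\E[1-q^{\mathrm{gap}}]$. The density is the inverse of the average inter-particle spacing, $\rho = 1/(1+\E[\mathrm{gap}])$, and the macroscopic current equals (particle density)$\times$(mean jump rate per particle)$= \rho\,\E[1-q^{\mathrm{gap}}]$. Using the $q$-binomial theorem $\sum_k \alpha^k/(q;q)_k = 1/(\alpha;q)_\infty$ together with its derivative one obtains $\E[1-q^{\mathrm{gap}}] = \alpha$, which immediately gives $j(\rho) = \alpha\rho$. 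For the density, differentiating $\log$ of the normalization identity and recognizing the resulting series as $\Psi_q$ (via \eqref{defqgamma}--\eqref{defqdigamma}, noting $\Psi_q(z) = -\log(1-q) + \log q\sum_{k\ge0} q^{z+k}/(1-q^{z+k})$) yields $\E[\mathrm{gap}] = -(\log(1-q)+\Psi_q(\log_q\alpha))/\log q$, hence $\rho = \log q/(\log q + \log(1-q) + \Psi_q(\log_q\alpha))$ as claimed. This is mostly bookkeeping with $q$-series.

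\textbf{Items (2) and (3).} Setting $\alpha = q^\theta$, so $\log_q\alpha = \theta$, item (1) parametrizes the fundamental diagram $\theta\mapsto(\rho(\theta),j(\theta))$. For a scalar conservation law with convex (or concave) flux and Riemann-type step data $\rho(0,x)=\id(x<0)$, the entropy solution is a rarefaction fan: along each ray $x/t = $ const the density is constant, and that constant is determined by $x/t = j'(\rho)$, the characteristic speed. So I would verify that the ray $x = \frac{f-1}{\kappa}t$ carries density $\rho(\theta) = \log q/(\log q+\log(1-q)+\Psi_q(\theta))$ by checking $j'(\rho(\theta)) = (f(\theta)-1)/\kappa(\theta)$. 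Since both $\rho$ and $j=\alpha\rho$ are known as explicit functions of $\theta$, the chain rule gives $j'(\rho) = (dj/d\theta)/(d\rho/d\theta)$; differentiating the closed forms and simplifying (the key derivative being $\frac{d}{d\theta}\Psi_q(\theta)=\Psi_q'(\theta)$, and $\frac{d}{d\theta}q^\theta = q^\theta\log q$) should collapse to $(f-1)/\kappa$ with $\kappa,f$ as in \eqref{defkappa}--\eqref{deff}; the series representations \eqref{kappafseries} are a convenient intermediate form for this check. Item (3) is then just the statement that, under local stationarity, this entropy solution describes the actual microscopic density profile at the macroscopic point $(\tau,(f-1)\tau/\kappa)$.

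\textbf{Item (4).} Finally, the law of large numbers for $X_N(\kappa N)$ follows by inverting the density profile. The label $N$ equals the number of particles to the right of $X_N$, which macroscopically is $\int_{X_N(\tau)/\tau}^{\infty}\rho(1,u)\,du\cdot\tau$; equivalently, the position $X_N(\tau)$ is the location of the ray carrying the cumulative count $N$. With the scaling $\tau=\kappa N$ the point $x/\tau = (f-1)/\kappa$ is exactly the ray on which, by items (2)--(3), the density takes the value matching parameter $\theta$, and a short computation shows the cumulative particle count up to that ray equals $N$ to leading order. Hence $X_N(\kappa N)/N \simeq f-1$.

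\textbf{Main obstacle.} The genuinely substantive point is that the argument rests on the \emph{local stationarity assumption} and on \eqref{pde} being solved by the entropy (rarefaction-fan) solution; making the passage from the microscopic dynamics to the conservation law rigorous is outside the scope here, so the cleanest route is to state items (2)--(4) as consequences of the (assumed) hydrodynamic description and to devote the real work to item (1) and to the explicit derivative identity $j'(\rho(\theta))=(f(\theta)-1)/\kappa(\theta)$, which is the one computation where the specific combinations defining $\kappa$, $f$ and $\chi$ in Definition~\ref{def:kfa} must be shown to emerge exactly.
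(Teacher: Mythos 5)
Your plan is correct and follows essentially the same route as the paper: item (1) is the identical $q$-series computation, and your characteristic-speed check $j'(\rho(\theta))=(f(\theta)-1)/\kappa(\theta)$ amounts to the same derivative identity the paper verifies directly by showing that \eqref{rhot} and \eqref{jx} sum to zero. The one step you leave implicit, the item-(4) count $\int_{(f-1)/\kappa}^{1}\rho\,\d x=1/\kappa$, is done in the paper by parametrizing the integral by $\tilde\theta$ as in \eqref{densityintegral} and using $\rho=1/(q^{\tilde\theta}\kappa-f+1)$ (cf.\ \eqref{rho1} and \eqref{implicitderivative}), which makes the integrand an exact derivative.
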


In order to visualize the macroscopic behaviour predicted by \eqref{lln}, consider the evolution of the points $(X_N(\tau)+N,N)$ in the coordinate system.
For $\tau=0$, these points all lie on the positive half of the vertical axis.
For $\tau$ large and after rescaling the picture by $\tau$, the points are macroscopically around $(f/\kappa,1/\kappa)$
which is a curve that can be parameterized by $\theta$ and it is shown on Figure~\ref{fig:macro}.
It can easily be seen from the series representation of $\kappa$ and $f$ \eqref{kappafseries} that the curve touches the axes at $(1,0)$ for $\theta\to0$ and at $(0,1-q)$ for $\theta\to\infty$.
It is clear that the right-most \mbox{$q$-TASEP} particle has speed $1$ hence the touching point $(1,0)$,
whereas it follows from the above calculation that the left-most particle which has already started moving after time $\tau$ is around the position $-(1-q)\tau$.

\begin{figure}
\begin{center}
\includegraphics[width=200pt]{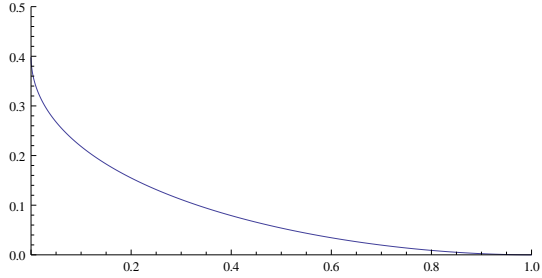}
\end{center}
\caption{The macroscopic shape of the positions of \mbox{$q$-TASEP} particles for $q=0.6$
which is the parametric curve $(f/\kappa,1/\kappa)$.
\label{fig:macro}}
\end{figure}

It is also possible to parameterize the macroscopic position by $\kappa$.
Indeed, from the series representation for $\kappa$ given in \eqref{kappafseries} one sees that $\theta\mapsto\kappa(\theta)$ is strictly decreasing from $\infty$ to $1/(1-q)$ as $\theta$ ranges from $0$ to $\infty$.
Hence the inverse function $\theta(\kappa)$ is well-defined however it is not explicit.
The present parametrization is also natural, because $\theta$ is the position of the double critical point of the function that gives the main contribution in the exponent of the correlation kernel as seen in Section~\ref{s:asympt}.

The main objective of the present paper is the study of the fluctuations of particle $X_N$ around the macroscopic (deterministic) behaviour given by \eqref{lln}.
By KPZ universality, one expects that these fluctuations are of order $\O(N^{1/3})$ and have Tracy--Widom statistics (see the review~\cite{Fer10b}).
Further, at a given time $\tau=\kappa N$, particles are correlated over a scale $\O(N^{2/3})$ with the Airy$_2$ process as limit process.
The same limit process is expected to arise~\cite{CFP10b} for the position of a tagged particle $X_N$ at times of order $N^{2/3}$ away from $\kappa N$ (as it was shown for TASEP in~\cite{SI07}).
More precisely, for any $c\in\R$ consider the scaling
\begin{align}
\tau(N,c)&=\kappa N+cq^{-\theta}N^{2/3},\label{deftau}\\
p(N,c)&=(f-1)N+cN^{2/3}-c^2\frac{(\log q)^3}{4\chi}N^{1/3}\label{defp}
\end{align}
with $\kappa$, $f$ and $\chi$ given in Definition~\ref{def:kfa}. $p(N,c)$ is the macroscopic approximation of the particle position, obtained as follows.
Let $\tilde\theta$ be such that $\tau(N,c)=\kappa(\tilde\theta)N$.
Then $p(N,c)=(f(\tilde\theta)-1)N+\O(1)$.
The rescaled tagged particle position given by
\begin{equation}\label{defxi}
c\mapsto \xi_N(c)=\frac{X_N(\tau(N,c))-p(N,c)}{\chi^{1/3}(\log q)^{-1}N^{1/3}}
\end{equation}
is expected to converge to the Airy$_2$ process.
Our main result is the convergence of one-point distribution of $\xi_N$ to the Tracy--Widom distribution function~\cite{TW94}.

\begin{theorem}\label{thm:main}
Let $q\in(0,1)$ and $\theta>0$ be fixed such that $q^\theta\le1/2$.
For any $c,x\in\R$ and with the notation above, the rescaled position $\xi_N$ converges in distribution, i.e.,
\begin{equation}
\lim_{N\to\infty}\P(\xi_N<x)=F_{\rm GUE}(x)
\end{equation}
where $F_{\rm GUE}$ is the GUE Tracy--Widom distribution function.
\end{theorem}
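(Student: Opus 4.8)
The plan is to start from the Fredholm determinant formula for the $q$-Laplace transform of the particle position $X_N$ given in Theorem~\ref{thm:starting}, and perform a steepest descent analysis under the scaling \eqref{deftau}--\eqref{defxi}. The argument splits into two logically independent pieces, matching the two sections announced in the introduction. First, on the probabilistic side, one must show that under our scaling the $q$-Laplace transform $\E\big[1/(\zeta q^{X_N+N};q)_\infty\big]$, evaluated at an appropriately chosen $\zeta=\zeta(N)$ scaling like $q^{-p(N,c)-\text{(fluctuation term)}}$, converges to $\P(\xi_N<x)$. This is the content of Section~\ref{s:qLaplace}: the function $a\mapsto 1/(\zeta q^{a};q)_\infty$ approximates, after the right scaling of $\zeta$ with $N$, a sharp indicator $\id(a > \text{threshold})$, with the width of the transition region being $o(N^{1/3})$, so that testing against this soft indicator is asymptotically the same as testing against the true cdf. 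One has to control the tails (the left tail of $X_N$ is easy by monotonicity/coupling with TASEP-type bounds; the transition region needs the explicit decay of the $q$-exponential), and this is where the hypothesis $q^\theta \le 1/2$ will presumably be used to get clean estimates.

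Second, on the analytic side (Section~\ref{s:asympt}), one must show that the Fredholm determinant $\det(\id - K)_{L^2(\text{contour})}$ appearing in Theorem~\ref{thm:starting}, after substituting the scaling and deforming contours, converges to $\det(\id - K_{\mathrm{Ai}})_{L^2(x,\infty)}$, the Airy kernel Fredholm determinant, which equals $F_{\rm GUE}(x)$. The kernel has the schematic form $\frac{1}{2\pi\I}\int \frac{ds}{\sin(\pi s)} \, \frac{g(w)}{g(w+s)} \cdots$ with an exponential factor $e^{N(F(w) - F(w+s))}$ for a function $F$ depending on $(q,\theta)$; one identifies $\theta$ as the location of a double critical point of $F$ (as the excerpt already flags), i.e. $F'(\theta)=F''(\theta)=0$, so that near the critical point $F(w) - F(\theta) \approx \chi (w-\theta)^3/3$ after rescaling $w - \theta \sim N^{-1/3}$, and the fluctuation exponents in \eqref{deftau}--\eqref{defp} are precisely tuned so the subleading terms reproduce the Airy-kernel form with spectral parameter $x$. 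The steps are: (i) compute $F$, verify the double critical point at $\theta$ and the sign $\chi>0$ via the series representations \eqref{kappafseries}; (ii) choose the two contours shown in Figure~\ref{fig:contours} so that both are steep descent paths for $\Re F$ through the critical point, staying within one vertical period; (iii) localize the $w$-integral and the $s$-integral to $N^{-1/3}$-neighborhoods of the critical point, bounding the complementary parts by the steep-descent property; (iv) Taylor expand and change variables to obtain the Airy kernel in the limit, controlling the $1/\sin(\pi s)$ factor and the residues it generates; (v) invoke a dominated-convergence argument for Fredholm determinants (uniform trace-norm bounds on $K$ along the contours) to pass the limit inside the determinant.

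The main obstacle, as the authors themselves signal, is step (ii)--(iii): the choice of contours. Because $\Re F$ is periodic in the imaginary direction, one cannot simply push the $w$-contour to $\pm\I\infty$ along a single steepest-descent ray; instead the Fredholm-determinant contour must wrap within one period, while the contour in the integral representation of the kernel is taken vertical, and its convergence at infinity is not supplied by the exponential factor (which is periodic, hence non-decaying) but only by the extra $1/\sin(\pi s)$ in the denominator. Reconciling these two requirements — a genuine steep-descent path for the asymptotics, and a path along which the spurious residues picked up from the poles of $1/\sin(\pi s)$ can be summed and shown to vanish (or to reorganize into something controllable) — is the delicate technical heart of the argument, and was absent from the analogous analysis of the semi-discrete polymer in \cite{BCF12}. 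Everything else is, in principle, a matter of careful but routine estimation once the contours are fixed; and the final identification $\det(\id - K_{\mathrm{Ai}})_{L^2(x,\infty)} = F_{\rm GUE}(x)$ is classical \cite{TW94}.
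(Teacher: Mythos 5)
Your outline follows essentially the same route as the paper: the same two-step decomposition (convergence of the $q$-Laplace transform to the distribution function via soft indicators, then steepest-descent analysis of the Fredholm determinant around the double critical point $\theta$ with cubic coefficient $\chi$, localization, and reduction to the Airy kernel), and you correctly single out the periodicity of $\Re f_0$, the vertical $Z$-contour whose convergence comes from $1/\sin$, and the residue control as the technical heart. One correction: the hypothesis $q^\theta\le 1/2$ is not used in the probabilistic step, which requires no tail estimates or coupling at all (the elementary Lemma~4.1.39 of \cite{BC11} together with monotonicity and uniform convergence of $y\mapsto 1/(-q^{yt};q)_\infty$ suffices); it enters exactly in the analytic part you flag as delicate, namely in proving that $\C_\varphi$ is steep descent for $-\Re(f_0)$ for angles $\varphi\in[\pi/4,\pi/2]$ (the term $(1-2q^{\theta+k})s\cos 2\varphi$ in \eqref{f0W} has the right sign only when $q^{\theta}\le 1/2$), which is what allows $\varphi$ to be taken close to $\pi/2$ so that the poles of the sine inverse can be controlled (Lemmas~\ref{lemma:steep}, \ref{lemma:expdecay} and \ref{lemma:residues}).
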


\begin{remark}
We believe that the condition $q^\theta\le1/2$ is technical and that Theorem~\ref{thm:main} holds for any $\theta>0$.
This technical restriction comes from the difficulty of controlling the simple poles arising from the sine inverse in representation of the kernel and at the same time to prove that one has a steep descent path.
It could be possible to obtain Tracy--Widom asymptotics for small values of $\theta$ using the small contour representation Theorem~3.2.11 of~\cite{BC11}
instead of the infinite contour representation Theorem~4.13 of~\cite{BCF12}.
However this other representation can not be used to analyze the whole $q^\theta>1/2$ case, so we do not pursue in this direction.
\end{remark}

\begin{remark}\label{rem:parametertau}
One can choose the time $\tau$ of the $q$-TASEP process to be the free parameter that tends to $\infty$.
It means that \eqref{defxi} can be rewritten as
\begin{equation}
X_{N(\tau,c)}(\tau)=\wt p(\tau,c)+\frac{\chi^{1/3}}{\kappa^{1/3}\log q}\xi_\tau\tau^{1/3}
\end{equation}
with
\begin{align}
N(\tau,c)=&\frac\tau\kappa-c\frac{\tau^{2/3}}{\kappa^{5/3}q^{\theta}}+\frac{2c^2}{3\kappa^{7/3}q^{2\theta}}\tau^{1/3},\label{defNtau}\\
\wt p(\tau,c)=&\frac{f-1}\kappa\tau+c\left(\frac1{\kappa^{2/3}}-\frac{f-1}{\kappa^{5/3}q^{\theta}}\right)\tau^{2/3}
-c^2\left(\frac{(\log q)^3}{4\chi\kappa^{1/3}}+\frac2{3\kappa^{4/3}q^{\theta}}+\frac{2(f-1)}{3\kappa^{7/3}q^{2\theta}}\right)\tau^{1/3}
\end{align}
where $\xi_\tau$ has also asymptotically GUE Tracy--Widom distribution.
In order to keep the notation simpler, we do not write integer part in \eqref{defNtau}.
\end{remark}

\begin{remark}\label{rem:current}
Theorem~\ref{thm:main} can be interpreted as a statement about the current fluctuation as follows.
Let $H(t,y)$ be the number of particles on the right of position $y$ at time $t$.
Then the event $\{X_N(t)\le y\}$ is clearly the same as $\{H(t,y)<N\}$.
Therefore the result of Theorem~\ref{thm:main} is equivalent with
\begin{equation}\label{currentrandomin}
\lim_{\tau\to\infty}\P\left(H\left(\tau,\wt p(\tau,c)+\frac{\chi^{1/3}}{\kappa^{1/3}\log q}x\tau^{1/3}\right)<N(\tau,c)\right)=F_{\rm GUE}(x).
\end{equation}

One can associate the height profile $h(t,y)$ to the \mbox{$q$-TASEP} in the way as usual for TASEP as follows.
The height difference $h(t,y+1)-h(t,y)$ is either $-1$ or $+1$ if there is a particle at position $y$ at time $t$ or not respectively.
The initial profile is set to be $h(0,y)=|y|$.
Then the relation
\begin{equation}\label{hHrelation}
h(t,y)=2H(t,y+1)+y
\end{equation}
clearly holds and the above statement \eqref{currentrandomin} along with Theorem~\ref{thm:scalingtheory} below can easily be rephrased in terms of the height profile $h(t,y)$.
\end{remark}

Next we show that the KPZ scaling theory conjecture is satisfied for \mbox{$q$-TASEP}.
To state the scaling conjecture, we introduce the notations and follow the formalization of~\cite{Spo13a}.
Suppose that $\eta\in\{-1,1\}^\Z$ is a configuration of the system where the component $\eta_j=h_{j+1}-h_j$ can be understood as the height difference between positions $j+1$ and $j$ in a growth model of a surface.
The Markov generator of the process is given by
\begin{equation}
Lf(\eta)=\sum_{j\in\Z} c_{j,j+1}(\eta)(f(\eta^{j,j+1})-f(\eta))
\end{equation}
where $\eta^{j,j+1}$ is the configuration obtained from $\eta$ by interchanging the slopes at $j$ and $j+1$.
Wedge initial condition is imposed, i.e.\ $h(j,0)=|j|$.
It is assumed that there is a family of spatially ergodic and time stationary measures $\mu_\varrho$ of the process $\eta(t)$ labelled by the average slope
\begin{equation}
\varrho=\lim_{a\to\infty}\frac1{2a+1}\sum_{|j|\le a} \eta_j.
\end{equation}

The average steady state current and the integrated covariance of the conserved slope field are given by
\begin{equation}
j(\varrho)=\mu_\varrho(c_{0,1}(\eta)(\eta_0-\eta_1)),\qquad A(\varrho)=\sum_{j\in\Z}(\mu_\varrho(\eta_0\eta_j)-\mu_\varrho(\eta_0)^2).
\end{equation}
The deterministic profile function of the surface can be computed by the Legendre transform $\phi(y)=\sup_{|\varrho|\le1}(y\varrho-j(\varrho))$.
Then
\begin{conj}[KPZ class,~\cite{Spo13a}]\label{conj:KPZ}
Suppose that $\phi$ is twice differentiable at $y$ with $\phi''(y)\neq0$ and set $\varrho=\phi'(y)$ for $|\varrho|<1$.
If $A(\varrho)<\infty$ and $\lambda(\varrho)=-j''(\varrho)\neq0$, then
\begin{equation}\label{scalingtheory}
\lim_{t\to\infty}\P\left(h(yt,t)-t\phi(y)\ge-(-\textstyle\frac12\lambda A^2)^{1/3}st^{1/3}\right)=F_{\rm GUE}(s).
\end{equation}
\end{conj}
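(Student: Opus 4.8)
Since the conjecture is formulated for the whole KPZ class it cannot be proved as stated; for $q$-TASEP what one can do---this is Theorem~\ref{thm:scalingtheory}---is to compute the model-dependent quantities $j(\varrho)$, $A(\varrho)$ and $\lambda(\varrho)$ explicitly from the stationary $q$-geometric measures and verify that \eqref{scalingtheory} reduces to the limit law already established in Theorem~\ref{thm:main}. The only input needed is \eqref{qgeodistr} together with Proposition~\ref{prop:lln}.

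First I would identify the family $\mu_\varrho$. By~\cite{BC11} the stationary gaps are i.i.d.\ $q$-geometric with parameter $\alpha\in[0,1)$ as in \eqref{qgeodistr}, and via the dictionary $\eta_j=+1$ iff site $j+1$ is empty (Remark~\ref{rem:current}) this is precisely a one-parameter family, with mean slope $\varrho=1-2\rho$ where $\rho=\rho(\alpha)$ is the particle density in \eqref{rhojrho}. Writing $\alpha=q^\theta$ turns $\varrho$, and every quantity downstream, into a smooth function of $\theta$, the parametrization used throughout the paper. The steady-state current $j(\varrho)$ is then, up to the factor forced by \eqref{hHrelation}, the particle current $\alpha\rho$ of \eqref{rhojrho}, hence explicit in $\theta$ through $\Psi_q(\theta)$.

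Next comes the integrated covariance $A(\varrho)$. Since the particle configuration is a renewal process (i.i.d.\ gaps), not a product measure on $\{-1,1\}^\Z$, I would use $\mathrm{Var}\big(\sum_{j=0}^{a-1}\eta_j\big)\sim A(\varrho)\,a$ together with $\sum_{j=0}^{a-1}\eta_j=a-2\,\#\{\text{particles in }(0,a]\}$ and the classical renewal asymptotics $\mathrm{Var}(\#\{\text{points in }(0,a]\})\sim a\,\mathrm{Var}(s)/\E[s]^3$ with inter-particle spacing $s=\mathrm{gap}+1$, so that $A(\varrho)=4\,\mathrm{Var}(\mathrm{gap})/\E[s]^3$. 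Here $\E[s]=1/\rho$ is given by \eqref{rhojrho}, and the $q$-geometric law \eqref{qgeodistr} can be realized, via the $q$-binomial theorem, as an independent sum of geometric variables with parameters $q^{\theta+k}$, $k\ge0$, whence $\mathrm{Var}(\mathrm{gap})=\sum_{k\ge0}q^{\theta+k}/(1-q^{\theta+k})^2=\Psi'_q(\theta)/(\log q)^2=\kappa q^\theta$ by \eqref{defkappa}; thus $A(\varrho)=4\kappa q^\theta\rho^3$. Finally $\lambda(\varrho)=-j''(\varrho)$ comes from the parametric chain rule $j''(\varrho)=\varrho'(\theta)^{-1}\frac{d}{d\theta}\big(j'(\theta)/\varrho'(\theta)\big)$ (primes denoting $\theta$-derivatives on the right-hand side), which introduces $\Psi''_q(\theta)$ and hence $\chi$ from \eqref{defalpha}.

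It then remains to assemble the pieces: to check that $\phi(y)=\sup_{|\varrho|<1}(y\varrho-j(\varrho))$ is twice differentiable with $\phi''(y)\neq0$, that $\varrho=\phi'(y)$ reproduces the law of large numbers \eqref{lln} and the curvature term of \eqref{defp} (this fixes the correspondence $y\leftrightarrow\theta$), and finally the algebraic identity
\begin{equation*}
\left(-\tfrac12\lambda(\varrho)A(\varrho)^2\right)^{1/3}=\frac{2\chi^{1/3}}{\kappa^{1/3}|\log q|},
\end{equation*}
the right-hand side being the Tracy--Widom scaling constant of \eqref{currentrandomin} (the factor $2$ coming from \eqref{hHrelation}); this, with \eqref{currentrandomin} and Theorem~\ref{thm:main}, yields \eqref{scalingtheory}. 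I expect this last step to be the real obstacle---not the individual computations, but showing that the combination $-\tfrac12\lambda A^2$ collapses, after a somewhat involved manipulation of $q$-digamma identities, to exactly the cube of the scaling constant of Theorem~\ref{thm:main}; keeping the various conventions straight (orientation of space and time, $\sup$ versus $\inf$ in the Legendre transform, the factor $2$ in \eqref{hHrelation}, and the sign of $\log q$) is where slips are easiest.
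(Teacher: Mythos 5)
Your plan follows the same route as the paper's proof of Theorem~\ref{thm:scalingtheory}: identify the stationary family through $\alpha=q^\theta$, compute $j(\varrho)$, $A(\varrho)$, $\lambda(\varrho)$ explicitly in $\theta$, and match the resulting coefficient in \eqref{scalingtheory} against the Tracy--Widom limit of Theorem~\ref{thm:main}. Your renewal-theoretic computation of $A$ is a legitimate alternative to the paper's use of Spohn's formulas \eqref{jAqtasep}, and it gives the correct value: $\mathrm{Var}(\mathrm{gap})=\Psi_q'(\theta)/(\log q)^2=\kappa q^\theta$ and $A=4\kappa q^\theta\rho^3$ coincide with \eqref{Acalc}, and your parametric computation of $\lambda$ is what the paper does in \eqref{lambdacalc}.

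There is, however, a genuine error in the identity you propose to verify, and it comes from a step your plan skips. You claim $\bigl(-\tfrac12\lambda A^2\bigr)^{1/3}=\frac{2\chi^{1/3}}{\kappa^{1/3}|\log q|}$, i.e.\ twice the spatial scaling constant of \eqref{currentrandomin}. But in \eqref{currentrandomin} the $x\tau^{1/3}$ term sits in the \emph{spatial argument} of $H$, whereas the conjecture \eqref{scalingtheory} concerns the fluctuation of $h$ (hence of $H$, via \eqref{hHrelation}) at a \emph{fixed} macroscopic location. Converting a spatial displacement of the level-crossing point into a fluctuation of $H$ requires multiplying by the local slope of the limit shape, i.e.\ by the density $\rho=\frac{1}{q^\theta\kappa-f+1}<1$; in the paper this is exactly the first part of the proof, where \eqref{currentrandomin} with $c=0$ is turned into \eqref{currentrandomout} by tilting $\theta$ on the $\tau^{-2/3}$ scale and using \eqref{implicitderivative}. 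The correct target identity is therefore
\begin{equation*}
\Bigl(-\tfrac12\lambda A^2\Bigr)^{1/3}=\frac{2}{q^\theta\kappa-f+1}\,\frac{\chi^{1/3}}{\kappa^{1/3}|\log q|},
\end{equation*}
which is what \eqref{coeff} compared with \eqref{currentSpohn} establishes; your version omits the factor $\rho$ and is false for every $\theta>0$ (it would only hold in the degenerate limit $\rho\to1$, i.e.\ $\theta\to0$). Had you carried out the verification as planned, the constants would not have matched, so the missing ingredient is precisely the tilt argument based on \eqref{implicitderivative} that transfers position fluctuations of $X_N$ into current fluctuations at a fixed point.
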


The following theorem is a consequence of Theorem~\ref{thm:main}.
It provides a form of Theorem~\ref{thm:main} in terms of the current fluctuations and it confirms Conjecture~\ref{conj:KPZ}.
The result is proved in Section~\ref{s:scalingtheory}.
\begin{theorem}\label{thm:scalingtheory}
Consider the current $H(t,y)$ defined in Remark~\ref{rem:current} and let $\xi_\tau$ be defined through
\begin{equation}\label{currentrandomout}
H\left(\tau,\frac{f-1}\kappa\tau\right)=\frac\tau\kappa+\frac1{q^\theta\kappa-f+1}\frac{\chi^{1/3}}{\kappa^{1/3}\log q}\xi_\tau \tau^{1/3}.
\end{equation}
If $q^\theta\le1/2$, then
\begin{equation}\label{rescheightconv}
\lim_{\tau\to\infty}\P(\xi_\tau<x)=F_{\rm GUE}(x).
\end{equation}
Further, the convergence \eqref{rescheightconv} confirms the KPZ scaling theory conjecture for \mbox{$q$-TASEP}.
\end{theorem}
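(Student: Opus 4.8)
The plan is to deduce \eqref{rescheightconv} from Theorem~\ref{thm:main} by a change of variables, and then to match the resulting fluctuation constant against the prediction of Conjecture~\ref{conj:KPZ}. For the first part I would use the identity $\{X_N(t)\le y\}=\{H(t,y)<N\}$ of Remark~\ref{rem:current} (the lattice $\pm1$ corrections being irrelevant at scale $\tau^{1/3}$). Freezing the spatial argument of $H$ at the macroscopic value $\tfrac{f-1}{\kappa}\tau$ and letting the number of particles to its right fluctuate is, through this identity, precisely the $c=0$ instance of Theorem~\ref{thm:main} (equivalently of Remark~\ref{rem:parametertau}) up to a reparametrization that vanishes in the limit: solving $\tau=\tau(N,c)$ for the particle index $N=\tfrac{\tau}{\kappa}+a\tau^{1/3}$ forces $c=c_\tau=O(\tau^{-1/3})\to0$. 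Next I would identify the local particle density at $(\tau,\tfrac{f-1}{\kappa}\tau)$: by Proposition~\ref{prop:lln} with $\alpha=q^\theta$ together with \eqref{kappafseries} one has $\E[\mathrm{gap}]=\sum_{k\ge0}q^{\theta+k}/(1-q^{\theta+k})=q^\theta\kappa-f$, so the density equals $\rho:=1/(1+\E[\mathrm{gap}])=1/(q^\theta\kappa-f+1)$ --- exactly the prefactor in \eqref{currentrandomout}, and precisely the Jacobian converting the spatial fluctuation scale $\chi^{1/3}(\log q)^{-1}N^{1/3}$ of \eqref{defxi} into the vertical one. Substituting $c=c_\tau$ into \eqref{defp}--\eqref{defxi} and unwinding the definitions --- keeping track of the sign flips produced by $\log q<0$, which together with two complementations cancel out --- the event $\{\xi_\tau<x\}$ becomes $\{\xi_N<x+o(1)\}$ with $N=N_\tau\sim\tau/\kappa$. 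One should also record that $\chi>0$ and $q^\theta\kappa-f+1>0$, both visible from \eqref{kappafseries} (indeed $\chi=-(\log q)^3\sum_{k\ge0}q^{2(\theta+k)}/(1-q^{\theta+k})^3$), so that $\chi^{1/3}$ is real. Passing from fixed $c$ to the sequence $c_\tau\to0$ is handled either by uniformity of the steep-descent analysis of Theorem~\ref{thm:main} over $c$ in a compact neighbourhood of $0$, or by monotonicity of $H$ in time (sandwich $H(\tau,\cdot)$ between $H(\tau(N,\pm\epsilon),\cdot)$ and let $\epsilon\downarrow0$ after $N\to\infty$, using continuity of $F_{\rm GUE}$). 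This gives \eqref{rescheightconv}.

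To confirm Conjecture~\ref{conj:KPZ} I would specialize its formalism to \mbox{$q$-TASEP} through \eqref{hHrelation}: the slopes $\eta_j\in\{-1,1\}$ are $-1$ exactly at occupied sites (up to the index shift in \eqref{hHrelation}), so $\varrho=1-2\rho$, and the time-stationary spatially-ergodic measures $\mu_\varrho$ are the product-over-gaps \mbox{$q$-geometric} measures \eqref{qgeodistr}, the relevant one having parameter $\alpha=q^\theta$. Then I would compute the ingredients of \eqref{scalingtheory}. First, $j(\varrho)=\mu_\varrho(c_{0,1}(\eta)(\eta_0-\eta_1))=-2\,j_{\rm part}(\rho)=-2\alpha\rho$: the factor $-2$ comes from \eqref{hHrelation}, and the only nonzero contribution is a particle just left of the bond, whose expected jump rate is $\E_{\mu_\varrho}[1-q^{\mathrm{gap}}]=\alpha$, which, like the variance below, follows from the generating function $\E[z^{\mathrm{gap}}]=(\alpha;q)_\infty/(\alpha z;q)_\infty$ (a consequence of the \mbox{$q$-binomial} theorem). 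Second, by Proposition~\ref{prop:lln} and \eqref{hHrelation}, at $y=(f-1)/\kappa$ the limit shape obeys $\phi(y)=(f+1)/\kappa$ and $\phi'(y)=\varrho$. Third, $A(\varrho)=\sum_j(\mu_\varrho(\eta_0\eta_j)-\mu_\varrho(\eta_0)^2)$ equals $4$ times the integrated covariance of the occupation field, which by the classical renewal identity (integrated covariance $=\mathrm{Var}(\mathrm{spacing})/(\text{mean spacing})^3$ for a stationary renewal process of mean spacing $1/\rho$) equals $4\rho^3\,\mathrm{Var}(\mathrm{gap})$, and $\mathrm{Var}(\mathrm{gap})=\sum_{k\ge0}q^{\theta+k}/(1-q^{\theta+k})^2=q^\theta\kappa$; thus $A(\varrho)=4\rho^3q^\theta\kappa<\infty$. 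Fourth, $\lambda(\varrho)=-j''(\varrho)=\tfrac12 j_{\rm part}''(\rho)$, and differentiating $j_{\rm part}(\rho)=q^\theta\rho$ with $\rho$ and $q^\theta$ regarded as functions of $\theta$ --- using $\rho'=-(\log q)q^\theta\kappa\rho^2$ and $\kappa'=\tfrac{2\log q}{q^\theta}\sum_{k\ge0}q^{2(\theta+k)}/(1-q^{\theta+k})^3$, both from Definition~\ref{def:kfa} and \eqref{kappafseries} --- the computation telescopes to $j_{\rm part}''(\rho)=\frac{-2\sum_{k\ge0}q^{2(\theta+k)}/(1-q^{\theta+k})^3}{q^{2\theta}\kappa^3\rho^3}\neq0$. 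Assembling these, $-\tfrac12\lambda(\varrho)A(\varrho)^2=\frac{8\rho^3}{\kappa}\sum_{k\ge0}\frac{q^{2(\theta+k)}}{(1-q^{\theta+k})^3}$, whose cube root equals $2\rho\chi^{1/3}/((-\log q)\kappa^{1/3})$ because $\chi=-(\log q)^3\sum_{k\ge0}q^{2(\theta+k)}/(1-q^{\theta+k})^3$. On the other hand, pushing \eqref{currentrandomout} through \eqref{hHrelation} gives $h(\tau,\tfrac{f-1}{\kappa}\tau)-\tfrac{f+1}{\kappa}\tau=\tfrac{2\rho\chi^{1/3}}{\kappa^{1/3}\log q}\xi_\tau\tau^{1/3}$, whose prefactor has modulus exactly $2\rho\chi^{1/3}/((-\log q)\kappa^{1/3})$ since $\log q<0$. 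Hence \eqref{scalingtheory} with $s=x$ is equivalent to \eqref{rescheightconv} (the inequality $\ge$ in \eqref{scalingtheory} versus $<$ in \eqref{rescheightconv} being reconciled by the sign of $\log q$ and continuity of $F_{\rm GUE}$), and the hypotheses of Conjecture~\ref{conj:KPZ} ($A<\infty$, $\lambda\neq0$, $\phi''\neq0$) hold because the \mbox{$q$-geometric} law has exponential tails and the relevant series is strictly positive; this confirms the conjecture.

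The first part is essentially bookkeeping once $\rho=1/(q^\theta\kappa-f+1)$ is identified and the $c_\tau\to0$ passage is dispatched. The real work lies in the second part, and the hardest single point is verifying that the several $\theta$-derivatives of the \mbox{$q$-digamma} quantities entering $\lambda(\varrho)$ conspire so that $-\tfrac12\lambda A^2$ collapses onto the single series $\sum_{k\ge0}q^{2(\theta+k)}/(1-q^{\theta+k})^3$, and hence onto $\chi$; the closed-form evaluation of $\E[1-q^{\mathrm{gap}}]$ and $\mathrm{Var}(\mathrm{gap})$ for the \mbox{$q$-geometric} law and the renewal formula for $A(\varrho)$ are the other substantive inputs. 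Keeping the orientation of the height function and all the signs carried by $\log q<0$ straight throughout is the other place where it is easy to slip.
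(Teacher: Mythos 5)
Your proposal is correct, and it reaches the same constants as the paper, but it implements both halves somewhat differently, so a comparison is worth recording. For the first half, the paper starts from \eqref{currentrandomin} with $c=0$ and trades the random spatial displacement for a perturbation of $\theta$ on the $\tau^{-2/3}$ scale, the Jacobian being supplied by the identity \eqref{implicitderivative}, $\frac{\d}{\d\theta}\frac{f-1}\kappa\big(\frac{\d}{\d\theta}\frac1\kappa\big)^{-1}=-(q^\theta\kappa-f+1)$; you instead invert the $(N,c)$ parametrization at fixed $\tau$ and identify the prefactor as the local density via $\E[\mathrm{gap}]=\sum_{k\ge0}q^{\theta+k}/(1-q^{\theta+k})=q^\theta\kappa-f$, which is the same statement in disguise ($1/\rho=q^\theta\kappa-f+1$ is exactly the slope in \eqref{implicitderivative}). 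Your explicit flag of the $c_\tau\to0$ issue, with the monotonicity sandwich as a remedy, is if anything more careful than the paper, which handles this step at the same informal Taylor-expansion level. For the second half, the paper passes to Spohn's reflected (left-jumping) convention and takes $j(\varrho)=-\alpha(1+\varrho)$ and $A(\varrho)=-\alpha(1+\varrho)\frac{\d\varrho}{\d\alpha}$ from \cite{Spo13a}, then grinds out $\lambda$ and $A$ in terms of $q$-digamma derivatives, yielding \eqref{lambdacalc}--\eqref{Acalc} and \eqref{coeff}; you stay in the paper's own orientation and obtain $A=4\rho^3\operatorname{Var}(\mathrm{gap})$ from the renewal identity together with $\operatorname{Var}(\mathrm{gap})=q^\theta\kappa$, and $\lambda=\tfrac12 j_{\rm part}''(\rho)$ by direct $\theta$-parametrized differentiation of $j_{\rm part}=q^\theta\rho$. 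I checked your intermediate formulas ($\operatorname{Var}(\mathrm{gap})=q^\theta\kappa$, $\rho'=-(\log q)q^\theta\kappa\rho^2$, $\kappa'=\frac{2\log q}{q^\theta}\sum_k q^{2(\theta+k)}/(1-q^{\theta+k})^3$, $j_{\rm part}''=-2S/(q^{2\theta}\kappa^3\rho^3)$, and $\chi=-(\log q)^3 S$): they all hold, and your $-\tfrac12\lambda A^2=8\rho^3\chi/(\kappa|\log q|^3)$ coincides with what \eqref{lambdacalc}--\eqref{Acalc} give, so the matching with the coefficient in \eqref{currentSpohn}/\eqref{currentrandomout} is the same as the paper's. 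What your route buys is a more probabilistic derivation of $A$ (renewal variance rather than Spohn's thermodynamic formula) and series manipulations in $\kappa$, $f$, $\chi$ rather than $q$-digamma calculus; what the paper's route buys is a literal, line-by-line verification against the formulas (2.21)--(2.23) of \cite{Spo13a}, which makes the claim ``confirms the conjecture as stated there'' immediate, whereas in your convention one should note (as you implicitly do via the Legendre identities $\phi'=\varrho$, $\phi=y\varrho-j$) that the reflection does not affect the invariant combination $-\tfrac12\lambda A^2$.
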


\begin{corollary}\label{cor:density}
The expressions in Proposition~\ref{prop:lln} for the local particle density after time $\tau$ at position $(f-1)\tau/\kappa$ is confirmed for $q^\theta\le1/2$ without assuming local stationarity.
In particular, the particle density after time $\tau$ at position $p\tau$ for \mbox{$p\in(-(1-q),0]$} is recovered.
Under the local stationarity assumption, the current through the origin is the same as the one obtained at the end of Chapter 3 in~\cite{BC11}.
\end{corollary}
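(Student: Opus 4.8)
The plan is to deduce the corollary from Theorem~\ref{thm:main} and Theorem~\ref{thm:scalingtheory}, which are proved without the local stationarity assumption. First I would isolate the law of large numbers contained in the fluctuation statement. Putting $c=0$ in~\eqref{deftau}--\eqref{defxi}, Theorem~\ref{thm:main} asserts that
\[
\xi_N(0)=\frac{X_N(\kappa N)-(f-1)N}{\chi^{1/3}(\log q)^{-1}N^{1/3}}
\]
converges in distribution, hence is bounded in probability, so that $X_N(\kappa N)/N=(f-1)+\chi^{1/3}(\log q)^{-1}N^{-2/3}\xi_N(0)\lto f-1$ in probability for every $\theta>0$ with $q^\theta\le1/2$; this is exactly~\eqref{lln} with the local stationarity assumption removed. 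Equivalently, via $\{X_N(t)\le y\}=\{H(t,y)<N\}$ from Remark~\ref{rem:current}, or directly from~\eqref{currentrandomout} of Theorem~\ref{thm:scalingtheory}, one obtains $H(\tau,(f-1)\tau/\kappa)/\tau\lto1/\kappa$ in probability as $\tau\to\infty$.

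Next I would convert this counting statement into the local density. Write $u(\theta)=(f(\theta)-1)/\kappa(\theta)$; by~\eqref{kappafseries} and the discussion after Proposition~\ref{prop:lln}, $\theta\mapsto\kappa(\theta)$ is a strictly decreasing diffeomorphism of $(0,\infty)$ onto $(1/(1-q),\infty)$, and $u$ is likewise a strictly decreasing diffeomorphism of $(0,\infty)$ onto $(-(1-q),1)$, with $u(\theta)\to1$ as $\theta\to0$ and $u(\theta)\to-(1-q)$ as $\theta\to\infty$. Since $y\mapsto H(\tau,y)$ is nonincreasing for every $\tau$ and the pointwise limit $y\mapsto\tilde G(y):=1/\kappa(u^{-1}(y))$ is continuous, the convergence $H(\tau,y\tau)/\tau\lto\tilde G(y)$ (valid for $y=u(\theta)$ with $q^\theta<1/2$ by the first step) improves, by monotonicity of $H(\tau,\cdot)$ and continuity of $\tilde G$, to locally uniform convergence in probability; consequently the local particle density at position $(f-1)\tau/\kappa=u(\theta)\tau$ equals $-\tilde G'(u(\theta))=-(1/\kappa(\theta))'/u'(\theta)$. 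Substituting $u=(f-1)/\kappa$ and simplifying by means of~\eqref{kappafseries} and Definition~\ref{def:kfa} gives $\frac{\log q}{\log q+\log(1-q)+\Psi_q(\theta)}$, which is exactly~\eqref{solutionPDE}, i.e.~\eqref{rhojrho} with $\alpha=q^\theta$; this proves the first assertion.

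For the statement about positions $p\tau$ with $p\in(-(1-q),0]$, set $\theta_0=\log_q(1/2)$, so that the first part covers every position $u(\theta)$ with $\theta\ge\theta_0$, i.e.\ the whole interval $(-(1-q),u(\theta_0)]$. Let $\theta^*$ be the unique root of $f(\theta)=1$, which exists because $f$ decreases from $\infty$ to $0$. Keeping only the $k=0$ term in the series for $f$ gives $(q^{\theta^*}/(1-q^{\theta^*}))^2\le f(\theta^*)=1$, so $q^{\theta^*}\le1/2=q^{\theta_0}$, whence $\theta^*\ge\theta_0$ and $u(\theta_0)\ge u(\theta^*)=0$; therefore $(-(1-q),0]$ lies in the covered range. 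Finally, in the local stationarity framework the current through the origin is read off from Proposition~\ref{prop:lln} at $\theta=\theta^*$, and a routine comparison identifies it with the current computed at the end of Chapter~3 of~\cite{BC11}. The one step that is not purely mechanical is the upgrade from pointwise to locally uniform convergence of $H(\tau,\cdot)/\tau$ that precedes the differentiation; it rests on the elementary fact that a pointwise-convergent sequence of monotone functions with continuous limit converges locally uniformly, after which the verification reduces to bookkeeping and a short $q$-series identity.
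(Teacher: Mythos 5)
Your proposal is correct and follows essentially the same route as the paper: the condition $q^\theta\le1/2$ is checked to hold for all positions $p\in(-(1-q),0]$ (the content of Remark~\ref{rem:techcond}, which you re-derive via the root of $f(\theta)=1$), the law of large numbers is extracted from Theorem~\ref{thm:main}/Theorem~\ref{thm:scalingtheory} without local stationarity, and the density is obtained by differentiating the limiting counting function using \eqref{implicitderivative} and \eqref{kappafseries}. The paper's own proof is just a terser version of this; your added step upgrading pointwise to locally uniform convergence of $H(\tau,\cdot)/\tau$ via monotonicity is a detail the paper leaves implicit, not a different approach.
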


\begin{remark}\label{rem:techcond}
The condition $q^\theta\le1/2$ is equivalent to $\theta\ge\log_q(1/2)$ and the function $\theta\mapsto f(q,\theta)$ is decreasing.
But for $\theta=\log_q(1/2)$, $f-1=\sum_{k=1}^\infty q^{2k}/(2-q^k)^2\geq 0$ for any $q\in (0,1)$.
Therefore, for any $q\in (0,1)$, for the $\theta$ given by $(f-1)/\kappa=p$, the condition $q^\theta\le1/2$ holds for all $p\le0$.
However we expect Corollary~\ref{cor:density} to hold for any \mbox{$p\in(-(1-q),1)$}.
\end{remark}

\begin{remark}
Recently after the submission of the present paper, the technical condition $q^\theta\le1/2$ has been removed in~\cite{B14}.
\end{remark}

\section{Hydrodynamic limit}\label{s:hydro}

The goal of the present section is to show that under the local stationarity assumption, the law of large numbers \eqref{lln} holds for the particle positions.
In particular, we give the proof of Proposition~\ref{prop:lln} below.
The law of large numbers \eqref{lln} also follows from Theorem~\ref{thm:main}, but we give a direct argument here without the whole asymptotic analysis of the problem.

\begin{proof}[Proof of Proposition~\ref{prop:lln}]$ $
\begin{enumerate}
\item
If the gaps between particles have $q$-geometric distribution with parameter $\alpha$ given in \eqref{qgeodistr}, then under the local stationarity assumption, the particle density is given by
\begin{equation}\label{rhowithG}
\rho=\frac1{1+\E({\rm gap})}.
\end{equation}
The expectation above can be computed as follows:
\begin{equation}\label{EG}
\E({\rm gap})=(\alpha;q)_\infty\alpha\frac{\d}{\d\alpha}\sum_{k=0}^\infty\frac{\alpha^k}{(q;q)_k} =(\alpha;q)_\infty\alpha\frac{\d}{\d\alpha}\frac1{(\alpha;q)_\infty}
=\alpha\sum_{k=0}^\infty\frac{q^k}{1-\alpha q^k}.
\end{equation}
Using the series representation of the $q$-digamma function \eqref{Psiseries} on the right-hand side of \eqref{EG} and substituting it into \eqref{rhowithG} leads to $\rho$ in \eqref{rhojrho}.

The particle current in local stationarity is the product of the particle density and the expected rate of jump.
The latter is equal to
\begin{equation}
\E\left(1-q^{\rm gap}\right)=(\alpha;q)_\infty\sum_{k=0}^\infty(1-q^k)\frac{\alpha^k}{(q;q)_k} =\alpha(\alpha;q)_\infty\sum_{k=1}^\infty\frac{\alpha^{k-1}}{(q;q)_{k-1}}=\alpha.
\end{equation}
The formula for the current $j(\rho)$ in \eqref{rhojrho} now follows.

\item
From the derivatives of the two sides of \eqref{solutionPDE} with respect to $t$ and $\theta$, one can express
\begin{equation}\label{rhot}
\rho_t\left(t,\frac{f-1}\kappa t\right)=\frac1t \frac{f-1}\kappa \left(\frac{\d}{\d\theta}\frac{f-1}\kappa\right)^{-1} \frac{\Psi_q'(\theta)\log q}{(\log q+\log(1-q)+\Psi_q(\theta))^2}.
\end{equation}
Using the fact that the corresponding $j(\rho)$ is given by \eqref{rhojrho}, a similar calculation as above shows that
\begin{multline}\label{jx}
j_x\left(t,\frac{f-1}\kappa t\right)=\frac1t \left(\frac{\d}{\d\theta}\frac{f-1}\kappa\right)^{-1}\\
\times\left(\frac{q^\theta\log q}{\log q+\log(1-q)+\Psi_q(\theta)}-\frac{q^\theta\Psi_q'(\theta)\log q}{(\log q+\log(1-q)+\Psi_q(\theta))^2}\right).
\end{multline}
Finally, \eqref{defkappa}--\eqref{deff} imply that the sum of \eqref{rhot} and \eqref{jx} is $0$.

\item
Is follows immediately from the previous part of the proposition.

\item
What remains to show is that the number of particles to the right of position $(f-1)\tau/\kappa$ after time $\tau$ is about $\tau/\kappa$, i.e.\ the integral of the particle density between $(f-1)/\kappa$ and $1$ is $1/\kappa$.
By parameterizing the integration interval by $\tilde\theta$ that ranges from $\theta$ to $0$, the integral can be written as
\begin{equation}\label{densityintegral}
\int_0^\theta-\frac{\log q}{\log q+\log(1-q)+\Psi_q(\tilde\theta)}\left(\frac{\d}{\d\tilde\theta}\frac{f-1}\kappa\right)(\tilde\theta)\,\d\tilde\theta.
\end{equation}
By \eqref{rho1} and \eqref{implicitderivative}, the negative ratio in the integrand of \eqref{densityintegral} can be rewritten as the ratio of the derivatives on the left-hand side of \eqref{implicitderivative} taken at $\tilde\theta$.
This yields that the integral \eqref{densityintegral} is equal to $1/\kappa$ as required.
\end{enumerate}
\end{proof}

\section{Finite time formula}\label{s:finite}
In order to introduce the kernel that appears in our finite time formula, we define some integration contours in the complex plane. Some of them are also shown on Figure~\ref{fig:contours}.
\begin{figure}
\begin{center}
\def\svgwidth{180pt}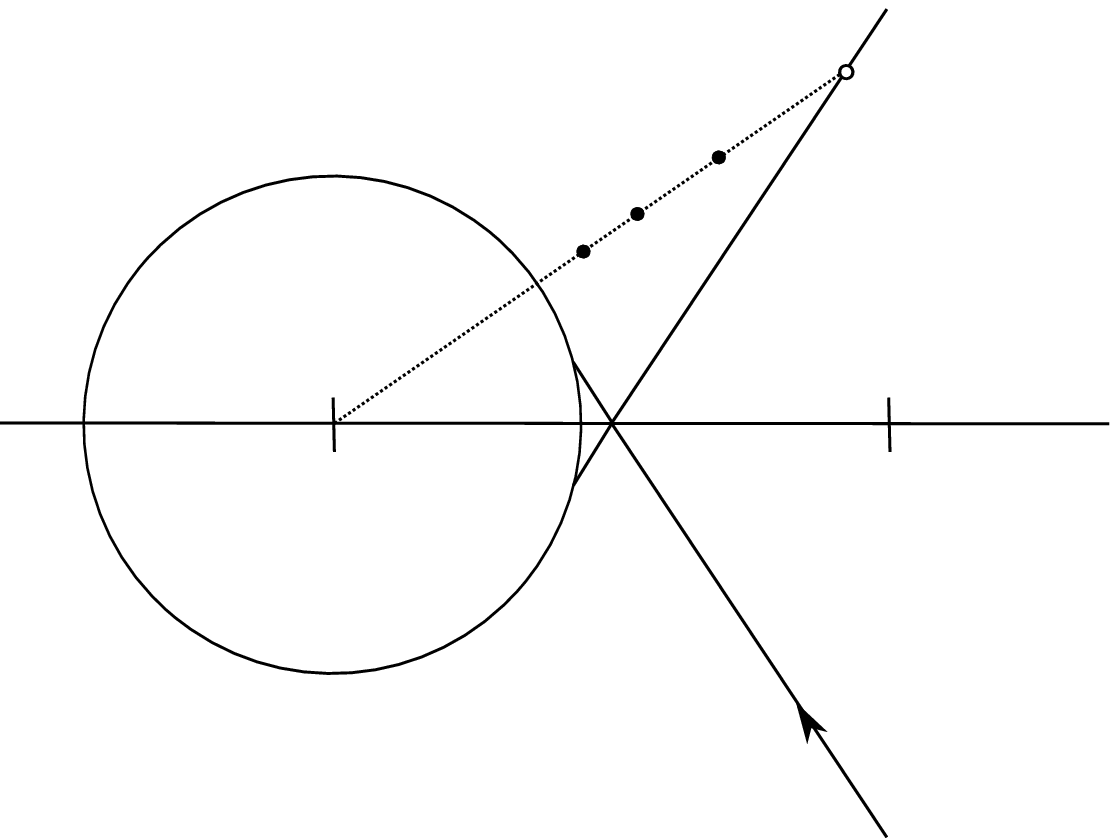
\hspace*{10pt}
\raisebox{10pt}{\def\svgwidth{180pt}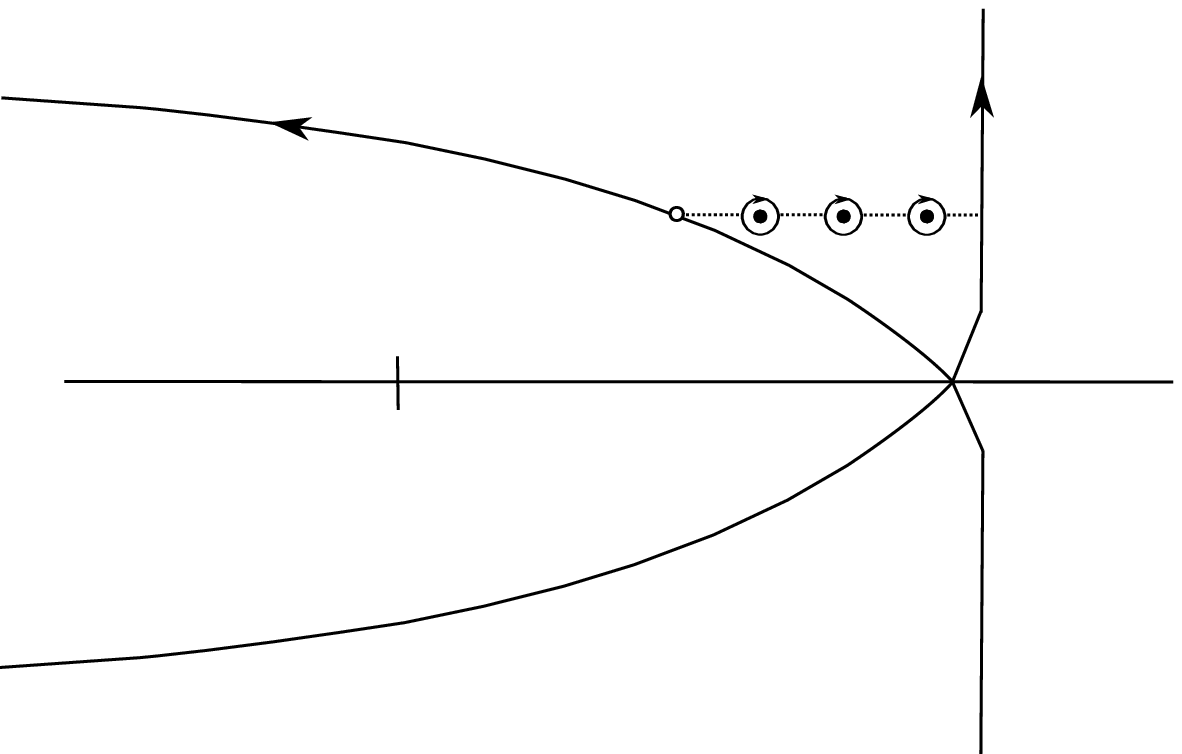}
\end{center}
\caption{The integration contours of the kernels.
The right-hand side is the image of the left-hand side under the map $x\mapsto\log_q x$:
$\C_\varphi=\log_q\wt\C_\varphi$; the circle around the origin on the left-hand side is the inverse image of the vertical line of $\D_W$ with the small modifications at $\theta$ and at $q^\theta$;
the images of the residues are also indicated.
\label{fig:contours}}
\end{figure}

\begin{definition}[Contours]\label{def:contours}
Let us fix a $q\in(0,1)$ and a $\theta>0$.
For a $\varphi\in(0,\pi/2]$, we define the contour $\wt\C_\varphi=\{q^\theta+e^{\I\varphi\sgn(y)}|y|:y\in\R\}$.
Let $\C_\varphi$ be its image under the map $x\mapsto\log_qx$, that is, $\C_\varphi=\{\log_q(q^\theta+e^{\I\varphi\sgn(y)}|y|):y\in\R\}$.
See Figure~\ref{fig:contours}.

For every $w\in\wt\C_\varphi$, define the contour $\wt\D_w$ that goes by straight lines from $R-\I \infty$ to $R-\I d$, to $1/2-\I d$, to $1/2+\I d$, to $R+\I d$ and to $R+\I\infty$
where $R,d>0$ are such that the following holds: let $b=\pi/4-\varphi/2$, then $\arg(w(q^s-1))\in(\pi/2+b,3\pi/2-b)$; and $q^sw$, $s\in\wt\D_w$ stays to the left of $\wt\C_\varphi$.

Let $\sigma>0$ be a small but fixed number, its value to be chosen later.
For $W\in\C_\varphi$, let $d>0$ sufficiently small and define the contour $\D_W$ as the previous contour shifted by $W$.
More precisely, $\D_W$ consists of straight lines from $\theta+\sigma-\I\infty$ to $\theta+\sigma+\I(\Im(W)-d)$, to $W+1/2+\I(\Im(W)-d)$, to $W+1/2+\I(\Im(W)+d)$,
to $\theta+\sigma+\I(\Im(W)+d)$ and to $\theta+\sigma+\I\infty$ such that $\D_W$ does not intersect $\C_\varphi$.
\end{definition}

The starting formula for the calculations of the present paper is a consequence of Theorem 4.13 in~\cite{BCF12}.

\begin{theorem}\label{thm:starting}
Let $\zeta\in\mathbb C\setminus\R_+$, then
\begin{equation}\label{startingid}
\E\left(\frac{1}{\left(\zeta q^{X_N(\tau)+N};q\right)_{\infty}}\right) = \det(\id-\wt K_{\zeta})_{L^2(\wt\C_\varphi)}
\end{equation}
for any $\varphi\in(0,\pi/4)$.
The operator $\wt K_{\zeta}$ is defined in terms of its integral kernel
\begin{equation}\label{defKtil}
\wt K_{\zeta}(w,w')=\frac1{2\pi\I}\int_{\wt\D_w}\d s\,\Gamma(-s)\Gamma(1+s)(-\zeta)^s g_{w,w'}(q^s),
\end{equation}
where
\begin{equation}\label{gwwprimeeqn}
g_{w,w'}(q^s) = \frac{\exp(\tau w(q^{s}-1))}{q^s w - w'} \left(\frac{(q^{s}w;q)_{\infty}}{(w;q)_{\infty}}\right)^N
\end{equation}
and the contours $\wt\C_\varphi$ and $\wt\D_w$ are as in Definition~\ref{def:contours}.
\end{theorem}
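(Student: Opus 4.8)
The identity is Theorem~4.13 of~\cite{BCF12} carried over to the contours of Definition~\ref{def:contours}, so the work consists in checking that $(\wt\C_\varphi,\wt\D_w)$ is an admissible pair of contours for that theorem and that the resulting Fredholm determinant does not change under the deformation (nor with $\varphi\in(0,\pi/4)$). First I would list the geometric hypotheses imposed on the contours in~\cite{BCF12} and verify them one at a time. The outer contour must be positioned with the poles of $1/(w;q)_\infty^N$, located at $w\in\{q^{-k}:k\ge0\}\subset[1,\infty)$, on the prescribed side; this is built into Definition~\ref{def:contours} through the vertex of $\wt\C_\varphi$ sitting at $q^\theta<1$. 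On $\wt\D_w$ one needs $\Re\bigl(w(q^s-1)\bigr)<0$ so that $\exp(\tau w(q^s-1))$ stays bounded, which is exactly the condition $\arg\bigl(w(q^s-1)\bigr)\in(\pi/2+b,3\pi/2-b)$, with $b=\pi/4-\varphi/2$, that the definition builds in. Finally $q^sw$ must stay to the left of $\wt\C_\varphi$ for $s\in\wt\D_w$, which yields a uniform lower bound on $|q^sw-w'|$ for $w,w'\in\wt\C_\varphi$ and so neutralises the factor $1/(q^sw-w')$ in \eqref{gwwprimeeqn}; making this compatible with the preceding two conditions, uniformly along the arms of $\wt\C_\varphi$, is what forces $\varphi<\pi/4$ and dictates the detours of $\wt\D_w$ to $\Re s=1/2$ and of $\D_W$ near $\Im W$.

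Second I would check that the right-hand side is meaningful. Along $\wt\D_w$, as $|\Im s|\to\infty$ with $\Re s$ fixed, $\Gamma(-s)\Gamma(1+s)=-\pi/\sin(\pi s)$ decays like $e^{-\pi|\Im s|}$; since $\zeta\notin\R_+$ the factor $(-\zeta)^s$ grows only like $e^{c|\Im s|}$ with $c=|\arg(-\zeta)|<\pi$, so the sine inverse dominates, while the remaining factors are bounded by the first paragraph ($\exp(\tau w(q^s-1))$ because its exponent has negative real part, and $(q^sw;q)_\infty/(w;q)_\infty$ as a ratio of convergent products). Hence the $s$-integral in \eqref{defKtil} converges absolutely, the detour to $\Re s=1/2$ keeps it off the poles of the sine inverse at $s\in\Z$, and the kernel it defines is smooth in $(w,w')$; its decay along the non-compact arms of $\wt\C_\varphi$, again through $\exp(\tau w(q^s-1))$ with $|w|\to\infty$, gives the trace-class bound, so that $\det(\id-\wt K_\zeta)_{L^2(\wt\C_\varphi)}$ is well defined and analytic in $\zeta\in\mathbb C\setminus\R_+$.

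Finally I would carry out the deformation: homotope the inner contour of~\cite{BCF12} to $\wt\D_w$ for each fixed $w$, then homotope the outer contour to $\wt\C_\varphi$, at each stage tracking which singularities of the integrand are crossed — in the $s$-variable the points $s\in\Z$ (from $\Gamma(-s)\Gamma(1+s)$) and $q^sw=w'$, in the $w$-variable the $q^{-k}$ above and again $q^sw=w'$. One wants the homotopies arranged so that the set of enclosed poles is unchanged; where a pole of the sine inverse must be crossed, its residue produces a rank-one term whose contribution to the Fredholm determinant cancels against the one generated when the outer contour is moved (the ``extra residues from the sine inverse'' mentioned in the introduction). The finite-$N$ integrand decays fast enough at infinity — the $e^{-\pi|\Im s|}$ and $\exp(\tau w(q^s-1))$ factors — that the connecting arcs contribute nothing, and the same argument yields independence of $\varphi\in(0,\pi/4)$. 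I expect this last step to be the main obstacle: moving the outer and inner contours together under the coupling ``$q^sw$ to the left of $\wt\C_\varphi$,'' keeping track of the pole of $1/(w;q)_\infty$ closest to the contour against the integer poles of the sine inverse, and verifying that no net residue survives so that the determinant is genuinely unchanged.
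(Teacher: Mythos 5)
There is a genuine gap: your proposal never connects $q$-TASEP to the object that Theorem~4.13 of~\cite{BCF12} actually concerns. That theorem is a Fredholm determinant formula for $\E\bigl[1/(\zeta q^{\lambda_N};q)_\infty\bigr]$ where $\lambda_N$ is the marginal of the Macdonald ($q$-Whittaker, $t=0$) measure with parameters $\wt a_1,\dots,\wt a_N$ and Plancherel specialization $\tau$ — not a statement about the particle position $X_N(\tau)$. The entire content of the paper's proof is the identification supplied by Proposition~\ref{prop:MM=qTASEP} (Remark~3.3.4 of~\cite{BC11}): under the specialization $(1,\dots,1;\tau)$, i.e.\ $\wt a_i=1$, the marginal $\lambda_N$ is equal in distribution to $X_N(\tau)+N$, because the Markov dynamics on Gelfand--Tsetlin patterns driving the $q$-Whittaker 2d growth model projects on its edge to $q$-TASEP with step initial condition. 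Without this ingredient, even a complete verification of your contour admissibility and well-definedness claims would only reproduce \eqref{startingid} with the $q$-Whittaker marginal on the left-hand side; the passage to the $q$-TASEP law is precisely the step your proposal omits.

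Secondarily, the work you flag as "the main obstacle" — homotoping both contours while tracking residues of the sine inverse and arguing a net cancellation of rank-one terms — is not part of this step in the paper and is not needed. Definition~\ref{def:contours} is built to satisfy the contour hypotheses of Theorem~4.13 of~\cite{BCF12} verbatim (the condition $b=\pi/4-\varphi/2$ with $\arg(w(q^s-1))\in(\pi/2+b,3\pi/2-b)$, and $q^sw$ staying to the left of $\wt\C_\varphi$), so the pair $(\wt\C_\varphi,\wt\D_w)$ is admissible by construction and the theorem is simply cited with $\wt a_i=1$. The only contour deformation in the paper occurs later, after the change of variables \eqref{wWsZ}, where $\wt\D_w$ is traded for a vertical line plus small circles around the poles of the sine inverse; that deformation crosses no singularities and is controlled by the exponential decay of $1/\sin(\pi(W-Z))$, with no residue-cancellation mechanism of the kind you anticipate. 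The genuinely delicate residue analysis ("extra residues from the sine inverse" mentioned in the introduction) belongs to the asymptotic analysis of Section~\ref{s:asympt} (Lemmas~\ref{lemma:expdecay} and~\ref{lemma:residues}), not to the proof of Theorem~\ref{thm:starting}.
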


To see how Theorem~\ref{thm:starting} above follows from Theorem 4.13 of~\cite{BCF12}, the key ingredient is the following proposition (see Remark 3.3.4 in~\cite{BC11}).
The Macdonald measure with specialization $(1,\dots,1;\tau)$ is supported on partitions $(\lambda_1\ge\lambda_2\ge\dots\ge\lambda_N\ge0)$ of length at most $N$
if the number of $1$s in the specialization is $N$.

\begin{proposition}\label{prop:MM=qTASEP}
Let $q\in[0,1)$, $\tau>0$ and $N$ integer be fixed.
The marginal $\lambda_N$ under the Macdonald measure with parameters $q$ and $t=0$ under the specialization $(1,\dots,1;\tau)$ where the number of $1$s in the specialization is $N$
equals in distribution to $X_N(\tau)+N$ under the \mbox{$q$-TASEP} law.
\end{proposition}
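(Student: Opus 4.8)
The plan is to deduce Proposition~\ref{prop:MM=qTASEP} from the known dynamical correspondence between $q$-TASEP and the $q$-Whittaker $2d$-growth model, exactly as in~\cite{BC11}. First I would recall that the $q$-Whittaker $2d$-growth model is the Markov dynamics on Gelfand--Tsetlin patterns whose evolution was constructed in~\cite{BC11} so that, when started from the fully-packed (densely packed) initial pattern, the time-$\tau$ distribution of the top row $(\lambda_1,\dots,\lambda_N)$ is precisely the Macdonald measure with $t=0$, parameter $q$, and Plancherel-type specialization $(1,\dots,1;\tau)$ (that is, all variables set to $1$, the number of $1$s being $N$, together with the continuous Plancherel parameter $\tau$). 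This is the content of the commutation relation between the dynamics and the Macdonald operators; I would cite Remark~3.3.4 and the surrounding construction in~\cite{BC11} rather than rederive it.

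The second step is to identify the bottom-row marginal of this $2d$-dynamics with $q$-TASEP. Here one observes that in the Gelfand--Tsetlin pattern $\lambda^{(1)}\le\lambda^{(2)}\le\dots\le\lambda^{(N)}$, the bottom component $\lambda^{(1)}_1$ evolves as a pure birth process, and more generally the ``left edge'' variables $\lambda^{(k)}_k$, for $k=1,\dots,N$, form an autonomous Markov chain: the jump rate of $\lambda^{(k)}_k$ depends only on the gap $\lambda^{(k+1)}_{k+1}-\lambda^{(k)}_k$ through the factor $1-q^{\lambda^{(k+1)}_{k+1}-\lambda^{(k)}_k}$ (with the convention that $\lambda^{(N+1)}_{N+1}$ is a free right-most coordinate moving at rate $1$). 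Setting $X_k(\tau)+k := \lambda^{(N+1-k)}_{N+1-k}$, or in the relevant top-row normalization $X_N(\tau)+N := \lambda_N = \lambda^{(N)}_N$, one checks that these rates match the $q$-TASEP generator $L$ from the Introduction verbatim, and that the densely-packed initial pattern corresponds to step initial condition $X_N(0)=-N$. Hence the Markovian subsystem $(\lambda^{(k)}_k)_{k\le N}$ is a copy of $q$-TASEP, and in particular $\lambda_N = \lambda^{(N)}_N$ has the law of $X_N(\tau)+N$.

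Combining the two steps: the marginal $\lambda_N$ under the Macdonald measure with the stated parameters equals (in law) the top-edge coordinate of the $q$-Whittaker dynamics at time $\tau$ started from the packed pattern, which in turn equals $X_N(\tau)+N$ under the $q$-TASEP law with step initial condition. This is the assertion.

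The main obstacle is the verification that the left-edge process $(\lambda^{(k)}_k)_k$ is genuinely autonomous and has exactly the $q$-TASEP rates --- i.e.\ that the pushing/blocking interactions in the $2d$-growth dynamics project cleanly onto a closed one-dimensional dynamics with no dependence on the interior pattern variables. This is precisely the point that makes $q$-TASEP a Markovian subsystem, and it is handled in~\cite{BC11}; I would rely on that construction rather than reprove the intertwining. A secondary bookkeeping point is matching the normalizations: the Macdonald measure lives on partitions $\lambda_1\ge\dots\ge\lambda_N\ge0$, so one must verify that the shift by $N$ in $X_N(\tau)+N$ is exactly what is needed for nonnegativity and for the packed initial condition $\lambda^{(k)}_j(0)=0$ to correspond to $X_N(0)=-N$; this is immediate from the definitions but should be stated.
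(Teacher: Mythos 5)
Your overall route is exactly the paper's: the paper offers no independent argument for this proposition, but simply cites Remark~3.3.4 of~\cite{BC11}, i.e.\ the fact that the $q$-Whittaker 2d growth dynamics started from the packed Gelfand--Tsetlin pattern has the Macdonald ($t=0$) measure with Plancherel specialization $(1,\dots,1;\tau)$ as its fixed-time marginal on the top row, together with the fact that $q$-TASEP is a Markovian subsystem of that dynamics. So relying on the construction in~\cite{BC11} rather than reproving the intertwining is precisely what the authors do.

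However, the details of your identification are garbled, and the rate-matching check as you state it would fail. In the interlacing pattern one has $\lambda^{(k+1)}_{k+1}\le\lambda^{(k)}_k$, so the gap $\lambda^{(k+1)}_{k+1}-\lambda^{(k)}_k$ you write is non-positive and $1-q^{\lambda^{(k+1)}_{k+1}-\lambda^{(k)}_k}$ is not a jump rate; in the~\cite{BC11} dynamics the left-edge particle $\lambda^{(k)}_k$ jumps at rate $1-q^{\lambda^{(k-1)}_{k-1}-\lambda^{(k)}_k}$, i.e.\ the relevant gap is to the row \emph{above} (with one fewer entries), with the convention $\lambda^{(0)}_0=+\infty$ so that $\lambda^{(1)}_1$ is the free right-most particle moving at rate $1$ --- not $\lambda^{(N+1)}_{N+1}$, which would be the left-most coordinate. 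Correspondingly, the correct identification is $X_k(\tau)=\lambda^{(k)}_k(\tau)-k$ for $k=1,\dots,N$ (so that $X_{k-1}-X_k-1=\lambda^{(k-1)}_{k-1}-\lambda^{(k)}_k$ reproduces the $q$-TASEP rates and the packed pattern gives $X_k(0)=-k$), and in particular $X_N(\tau)+N=\lambda^{(N)}_N=\lambda_N$; your alternative formula $X_k(\tau)+k=\lambda^{(N+1-k)}_{N+1-k}$ contradicts this (it would give $X_N+N=\lambda^{(1)}_1$, the largest edge coordinate rather than the smallest part of the top row). With the indexing straightened out, the argument goes through and coincides with the paper's citation-based justification.
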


\begin{proof}[Proof of Theorem~\ref{thm:starting}]
Starting from Theorem 4.13 in~\cite{BCF12} with $\wt a_i=1$ for \mbox{$i=1,\dots,N$}, we can apply Proposition~\ref{prop:MM=qTASEP} to get Theorem~\ref{thm:starting}.
\end{proof}

For later use, we first do the following change of variables in \eqref{startingid}--\eqref{defKtil}:
\begin{equation}\label{wWsZ}
w=q^W,\quad w'=q^{W'},\quad s+W=Z.
\end{equation}
We obtain the kernel
\begin{equation}\label{Ktil2}
\wh K_\zeta(W,W')=\frac{q^W\log q}{2\pi\I}\int_{\D_W}\frac{\d Z}{q^Z-q^{W'}}\frac\pi{\sin(\pi(W-Z))}
\frac{(-\zeta)^Z\exp(\tau q^Z+N\log(q^Z;q)_\infty)}{(-\zeta)^W\exp(\tau q^W+N\log(q^W;q)_\infty)}
\end{equation}
where the contour for $W$ and $W'$ is now $\C_\varphi$ for some $\varphi\in(0,\pi/4)$ and the $Z$-contour can be chosen to be $\D_W$ as in Definition~\ref{def:contours},
since we do not cross any singularity of the integrand and the $Z$-integral remains bounded due to the exponential decay of the sine inverse along a vertical line.

Note that $\D_W$ can be replaced by a vertical line and sufficiently small circles around the residues coming from the sine at $W+1,W+2,\dots,W+k_W$ where $k_W$ is the number of residues.
More precisely, for fixed but arbitrarily small $\sigma>0$ and for each $W\in\C_\varphi$, the corresponding vertical line will be either $\theta+\sigma+\I\R$ or $\theta+2\sigma+\I\R$
such that it is at distance at least $\sigma/2$ from the closest pole of the sine inverse.
It is also possible to modify the vertical line in a small neighbourhood of $\theta$ and replace it by the wedge $\{\theta+e^{\I\varphi\sgn(y)}|y|:y\in[-\delta,\delta]\}$.
We will refer also to this new contour as $\D_W$ which now consists of the vertical line perturbed at $\theta$ and the small circles.

The contours $\C_\varphi$ and $\D_W$ for the kernel $\wh K_\zeta$ are on the right-hand side of Figure~\ref{fig:contours}.
The left-hand side of the figure is the inverse image of these contours under the map $x\mapsto\log_q x$.
After transforming the Fredholm determinant to the contours shown on the right-hand side of Figure~\ref{fig:contours},
we will mostly work with these variables, but a part of the argument can be seen more easily in terms of the variables on the left-hand side of the figure.

\section{Convergence of $q$-Laplace transform}\label{s:qLaplace}

Let us choose
\begin{equation}\label{defzeta}
\zeta=-q^{-fN-cN^{2/3}+\beta_x\frac{N^{1/3}}{\log q}}\in\mathbb C\setminus\R_+
\end{equation}
where
\begin{equation}\label{defbeta}
\beta_x=c^2\frac{(\log q)^4}{4\chi}-\chi^{1/3}x.
\end{equation}
With the definition \eqref{defxi} and the choice of $\zeta$ above, the left-hand side of \eqref{startingid} becomes
\begin{equation}\label{exprewrite}
\E\left(\frac1{\left(\zeta q^{X_N(\tau)+N};q\right)_{\infty}}\right)
=\E\Bigg(\frac1{\big(-q^{\frac{\chi^{1/3}}{\log q}N^{1/3}(\xi_N-x)};q\big)_\infty}\Bigg).
\end{equation}
The function which appears on the right-hand side under the expectation above, has the following asymptotic property.

\begin{lemma}\label{lemma:unifconv}$ $
\begin{enumerate}
\item
The function
\begin{equation}
f_t(y)=\frac1{(-q^{yt};q)_\infty}
\end{equation}
is increasing for all $t>0$ and it is decreasing for all $t<0$.
\item
For each $\delta>0$, $f_t(y)\to\id(y>0)$ converges uniformly on $y\in\R\setminus[-\delta,\delta]$ as $t\to\infty$.
\end{enumerate}
\end{lemma}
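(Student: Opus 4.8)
The plan is to analyze the function $f_t(y)=1/(-q^{yt};q)_\infty=\prod_{k=0}^\infty(1+q^{yt+k})^{-1}$ directly from this product representation. For the monotonicity in part (1), I would fix $t>0$ and differentiate $\log f_t(y)=-\sum_{k=0}^\infty\log(1+q^{yt+k})$ with respect to $y$. Since $q\in(0,1)$, $\log q<0$, and each summand $\log(1+q^{yt+k})$ is a decreasing function of its argument $yt+k$ (as $q^{yt+k}$ decreases when $yt$ increases), so $\log f_t$ is increasing in $y$; hence $f_t$ is increasing. For $t<0$ the substitution $t\mapsto -t$, i.e.\ $y\mapsto -y$, flips the sign and gives that $f_t$ is decreasing. (One has to check the differentiated series converges uniformly on compacts, which is immediate since $q^{yt+k}$ decays geometrically in $k$ uniformly for $y$ in a compact set and $t$ fixed; termwise differentiation is then justified.)

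For part (2), fix $\delta>0$ and let $t\to+\infty$. On $y\ge\delta$ we have $q^{yt+k}\le q^{\delta t+k}\to 0$, so $0\le -\log f_t(y)=\sum_k\log(1+q^{yt+k})\le\sum_k q^{yt+k}=\frac{q^{yt}}{1-q}\le\frac{q^{\delta t}}{1-q}\to0$ uniformly in $y\ge\delta$; hence $f_t(y)\to1$ uniformly there. On $y\le-\delta$ we have $q^{yt}=q^{-|y|t}\ge q^{-\delta t}\to+\infty$; using just the $k=0$ factor, $f_t(y)\le(1+q^{yt})^{-1}\le(1+q^{-\delta t})^{-1}\to0$ uniformly in $y\le-\delta$ (all other factors are in $(0,1]$, so they only help). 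Combining the two regimes gives $f_t(y)\to\id(y>0)$ uniformly on $\R\setminus[-\delta,\delta]$.

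I do not expect a serious obstacle here; the statement is elementary once the infinite product form is written down. The only mild care needed is (a) justifying termwise differentiation of the logarithmic series for part (1) — handled by the uniform geometric bound on $q^{yt+k}$ for $y$ ranging over a compact set — and (b) making sure the bounds in part (2) are genuinely uniform in $y$ on the two half-lines $y\ge\delta$ and $y\le-\delta$, which they are because the estimates depend on $y$ only through $q^{yt}$, which is monotone in $y$ and thus controlled by its value at the endpoint $y=\pm\delta$. If one prefers a slicker argument for part (1), monotonicity also follows without differentiation: for $y_1<y_2$ and $t>0$, each factor satisfies $1+q^{y_2 t+k}<1+q^{y_1 t+k}$, so the (convergent, positive-term) infinite products compare, giving $f_t(y_2)>f_t(y_1)$ directly; this avoids the differentiability discussion entirely, and I would likely present it this way.
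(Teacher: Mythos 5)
Your proposal is correct and follows essentially the same route as the paper: monotonicity is read off factor by factor from the product $\prod_{k\ge0}(1+q^{yt+k})^{-1}$, and the uniform convergence on $\{|y|\ge\delta\}$ comes from uniform control of the factors (the paper states this tersely; you merely make the uniformity explicit via the geometric bound $\sum_k q^{yt+k}\le q^{\delta t}/(1-q)$ on $y\ge\delta$ and the single $k=0$ factor on $y\le-\delta$). No gaps.
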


\begin{proof}
By the definition of the $q$-Pochhammer symbol \eqref{defqpochhammer},
the factor $\frac1{1+q^{yt+k}}$ increases in $y$ for positive values of $t$ and decreases for negative values of $t$ for each $k$.
This proves the monotonicity.

The factor $\frac1{1+q^{yt+k}}$ is uniformly close to $1$ or to $0$ on $y\in(\delta,\infty)$ or on $y\in(-\infty,-\delta)$ respectively for each $k$ if $t$ is large enough.
Hence one can easily get the uniform convergence as stated in the lemma.
\end{proof}

We can use the next elementary probability lemma.
\begin{lemma}[Lemma~4.1.39 of~\cite{BC11}]\label{lemma:problemma1}
Consider a sequence of functions $(f_n)_{n\geq 1}$ mapping $\R\to [0,1]$ such that for each $n$, $f_n(y)$ is strictly decreasing in $y$ with a limit of $1$ at $y=-\infty$ and $0$ at $y=\infty$,
and for each $\delta>0$, on $\R\setminus[-\delta,\delta]$, $f_n$ converges uniformly to $\id(y<0)$. Consider a sequence of random variables $X_n$ such that for each $r\in\R$,
\begin{equation}
\E[f_n(X_n-r)] \to p(r)
\end{equation}
and assume that $p(r)$ is a continuous probability distribution function.
Then $X_n$ converges weakly in distribution to a random variable $X$ which is distributed according to \mbox{$\P(X<r) = p(r)$}.
\end{lemma}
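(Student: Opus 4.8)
The plan is to sandwich each $f_n$ between two shifted indicator functions, up to a vanishing error, integrate against the law of $X_n-r$, and then pass to the limit successively in $n$, in the uniform-approximation error $\epsilon$, and in the spatial shift $\delta$, the final step relying on the continuity of $p$.

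First I would fix $\delta>0$ and $\epsilon>0$ and use the uniform convergence hypothesis: for all $n$ large enough, $f_n(y)\ge 1-\epsilon$ whenever $y<-\delta$ and $f_n(y)\le\epsilon$ whenever $y>\delta$. Combined with $0\le f_n\le 1$ everywhere, this gives the deterministic bound $(1-\epsilon)\,\id(y<-\delta)\le f_n(y)\le\epsilon+\id(y\le\delta)$ for all $y\in\R$. Substituting $y=X_n-r$ and taking expectations, for all large $n$ one gets $(1-\epsilon)\,\P(X_n<r-\delta)\le\E[f_n(X_n-r)]\le\epsilon+\P(X_n\le r+\delta)$. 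Next I would let $n\to\infty$: by hypothesis the middle term tends to $p(r)$, so $(1-\epsilon)\limsup_n\P(X_n<r-\delta)\le p(r)$ and $p(r)\le\epsilon+\liminf_n\P(X_n\le r+\delta)$. Letting $\epsilon\downarrow 0$, then using the elementary inclusions $\P(X_n\le a)\le\P(X_n<b)\le\P(X_n\le c)$ for $a<b<c$ to relate the two shifted quantities, and finally letting $\delta\downarrow 0$ while invoking the continuity of $p$ to absorb the $\pm\delta$ displacements, I would conclude that $\lim_{n\to\infty}\P(X_n\le r)=\lim_{n\to\infty}\P(X_n<r)=p(r)$ for every $r\in\R$. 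Since $p$ is a continuous probability distribution function, it is the distribution function of some random variable $X$ with $\P(X<r)=\P(X\le r)=p(r)$, and every $r$ is a continuity point of $p$; hence $\P(X_n\le r)\to\P(X\le r)$ at all $r$, which is precisely weak convergence $X_n\Rightarrow X$.

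The only delicate point, and essentially the whole content of the argument, is the order in which the three limits are taken: the hypothesis controls only $n\to\infty$, so that must come first; $\epsilon\downarrow 0$ comes next; and only then can $\delta\downarrow 0$, a step that genuinely uses the assumed continuity of $p$. The strict monotonicity of the $f_n$ and their limits at $\pm\infty$ play essentially no role in this proof; they are simply the natural form in which the $q$-TASEP application of Section~\ref{s:qLaplace} supplies the functions $f_n$.
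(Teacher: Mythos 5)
Your proof is correct: the two-sided bound $(1-\epsilon)\,\id(y<-\delta)\le f_n(y)\le\epsilon+\id(y\le\delta)$, applied to $y=X_n-r$, together with the limit order $n\to\infty$, then $\epsilon\downarrow0$, then $\delta\downarrow0$ using continuity of $p$, yields $\P(X_n\le r)\to p(r)$ at every $r$ and hence weak convergence. The paper itself gives no proof of this statement (it is quoted from Lemma~4.1.39 of~\cite{BC11}), and your sandwich argument is essentially the one given there; your side remark is also accurate that only $0\le f_n\le1$ and the uniform convergence off $[-\delta,\delta]$ are used, the strict monotonicity and limits at $\pm\infty$ being relevant only for how the lemma is applied in Section~\ref{s:qLaplace}.
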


\begin{proof}[Proof of Theorem~\ref{thm:main}]
The sequence of functions
\[f_N(y)=\frac1{\left(-q^{\frac{\chi^{1/3}}{\log q}N^{1/3}y};q\right)_\infty}\]
satisfies the requirements of Lemma~\ref{lemma:problemma1} by Lemma~\ref{lemma:unifconv} since $\log q<0$.
By Theorem~\ref{thm:Fredholmconv} below and the equation \eqref{startingid}, \eqref{exprewrite} also converges to $F_{\rm GUE}(x)$ the Tracy--Widom distribution function,
hence we can use Lemma~\ref{lemma:problemma1} to conclude that the sequence $\xi_N$ converges in law to the Tracy--Widom distribution.
\end{proof}

\section{Asymptotic analysis}\label{s:asympt}

This section is devoted to prove the next convergence result for Fredholm determinants which is the key fact for the Tracy--Widom limit of the current fluctuation of \mbox{$q$-TASEP}.

\begin{theorem}\label{thm:Fredholmconv}
Let $x\in\R$ be fixed and choose $\zeta$ according to \eqref{defzeta}.
Then
\begin{equation}
\det(\id-\wt K_\zeta)_{L^2(\wt\C_\varphi)}\to F_{\rm GUE}(x)
\end{equation}
as $N\to\infty$.
\end{theorem}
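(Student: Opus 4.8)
The plan is to perform a steep descent (saddle point) analysis of the Fredholm determinant $\det(\id-\wt K_\zeta)_{L^2(\wt\C_\varphi)}$, working with the transformed kernel $\wh K_\zeta$ on the contours $\C_\varphi$ and $\D_W$ from the right-hand side of Figure~\ref{fig:contours}. First I would recast the exponent of the integrand of $\wh K_\zeta$ in \eqref{Ktil2} using the choice of $\zeta$ in \eqref{defzeta}: writing everything in terms of a single function
\begin{equation}
G(z) = \tau q^z + N\log(q^z;q)_\infty + Nf\,\log q\cdot z + cN^{2/3}\log q\cdot z,\notag
\end{equation}
the ratio $(-\zeta)^Z\exp(\tau q^Z+N\log(q^Z;q)_\infty)/[(-\zeta)^W\exp(\tau q^W+N\log(q^W;q)_\infty)]$ becomes $\exp(G(Z)-G(W))$ up to the subleading $\beta_x N^{1/3}/\log q$ term, which will survive into the limit and produce the variable $x$. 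Using \eqref{defkappa}--\eqref{defalpha} one checks that, at the reference time $\tau=\kappa N$, the point $z=\theta$ is a double critical point of the leading part of $G$: $G'(\theta)=G''(\theta)=0$, with $G'''(\theta)$ proportional to $\chi$. The scaling exponents $N^{2/3}$ and $N^{1/3}$ in \eqref{deftau}--\eqref{defp} are precisely those that, after the local rescaling $Z=\theta+(\log q)^{-1}\chi^{-1/3}N^{-1/3}\tilde z$ (and similarly for $W$), turn $G(Z)-G(W)$ into the Airy-type cubic $\tilde z^3/3 - \tilde w^3/3 + x(\tilde z-\tilde w) + c(\tilde z^2-\tilde w^2)\cdot(\text{const})$, so that the kernel $\wh K_\zeta$ rescaled by the Jacobian converges pointwise to the (shifted/conjugated) Airy kernel whose Fredholm determinant on the appropriate contour is $F_{\rm GUE}(x)$.

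The core analytic work has three parts. (i) \emph{Steep descent geometry}: I must verify that $\C_\varphi$ is a steep descent path for $\Re G$ with its maximum at $W=\theta$, and that $\D_W$ is a steep descent path for $-\Re G$ with its minimum near $Z=\theta$; this is where the delicate contour choices of Definition~\ref{def:contours} enter, in particular that the vertical part of $\D_W$ lies at $\theta+\sigma$ (or $\theta+2\sigma$) to the right of the critical point, the wedge modification at $\theta$ matching the opening angle $\varphi$ of $\C_\varphi$, and the periodicity of $\Re G$ in the imaginary direction (coming from $q^z=q^{z+2\pi\I/\log q}$) being handled by keeping $\C_\varphi$ within one period while the sine factor $\pi/\sin(\pi(W-Z))$ supplies the decay making the vertical $Z$-integral converge. (ii) \emph{Localization}: standard steep-descent estimates show the contribution to the kernel from $|W-\theta|,|Z-\theta|\ge\delta$ is exponentially small in $N$, so only an $N^{-1/3}$-neighbourhood of $\theta$ matters; on that neighbourhood a Taylor expansion of $G$ to third order, with the rescaling above, gives convergence of the kernel. (iii) \emph{Control of the extra residues}: replacing $\D_W$ by a vertical line picks up the simple poles of $\pi/\sin(\pi(W-Z))$ at $Z=W+1,\dots,W+k_W$; I must show these residue terms either cancel appropriately or are uniformly exponentially negligible — this is exactly where the hypothesis $q^\theta\le 1/2$ is used, since it is what guarantees simultaneously that the residue locations stay in a region where $\Re(G(W+k)-G(W))$ is sufficiently negative and that the steep descent property is not destroyed.

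To upgrade pointwise kernel convergence to convergence of the Fredholm determinant I would produce a uniform integrable bound: an estimate of the form $|\wh K_\zeta(W,W')|\le C e^{-c'|W-\theta|}e^{-c'|W'-\theta|}$ (after conjugating the kernel by a suitable factor, which does not change the determinant), valid uniformly in $N$, together with the Hadamard bound on the determinant expansion, so that dominated convergence applies term by term in the Fredholm series $\sum_{n\ge0}\frac{(-1)^n}{n!}\int\det[\wh K_\zeta(W_i,W_j)]\,dW_1\cdots dW_n$. The main obstacle I anticipate is part (iii) combined with (i): simultaneously arranging a genuine steep descent path and keeping the sine-inverse poles under control is the technical heart of the paper (as the authors flag in the introduction and the remark after Theorem~\ref{thm:main}), and it is what forces the restriction $q^\theta\le 1/2$; carefully choosing $\sigma$, $d$, $R$ and the wedge opening $\varphi$ so that all the competing constraints of Definition~\ref{def:contours} hold at once, and then extracting quantitative decay estimates that are uniform in $N$ and in $W\in\C_\varphi$, is where most of the effort will go.
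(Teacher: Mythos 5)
Your proposal follows essentially the same route as the paper's own proof: a steep descent analysis at the double critical point $\theta$ of the leading exponent (the paper's $f_0$, with $f_0'(\theta)=f_0''(\theta)=0$ and third derivative giving $\chi$), deformation of the angle $\varphi$ towards $\pi/2$, localization to an $N^{-1/3}$-neighbourhood of $\theta$, control of the sine-inverse residues where $q^\theta\le 1/2$ enters, dominated convergence in the Fredholm series via uniform exponential/integrable bounds, and identification of the rescaled limit with the Airy-kernel determinant $F_{\rm GUE}(x)$ — i.e.\ the content of Propositions~\ref{prop:kerneldeformation}--\ref{prop:rewritekernel} and Lemmas~\ref{lemma:steep}, \ref{lemma:expdecay}, \ref{lemma:residues}. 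The only blemishes are cosmetic: a sign slip in your $G$ (the factor $(-\zeta)^Z$ contributes $-fN\log q\,Z$, recovering the paper's $f_0$) and the correspondingly reversed orientation of your steep descent statements.
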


If we substitute \eqref{deftau} and \eqref{defzeta} into \eqref{Ktil2}, then
\begin{equation}
\det(\id-\wt K_\zeta)_{L^2(\wt\C_\varphi)} = \det(\id-K_x)_{L^2(\C_\varphi)},
\end{equation}
where
\begin{equation}\label{Ktil3}
K_x(W,W')=\frac{q^W\log q}{2\pi\I}\int_{\D_W}\frac{\d Z}{q^Z-q^{W'}}\frac\pi{\sin(\pi(W-Z))}
\frac{e^{Nf_0(Z)+N^{2/3}f_1(Z)+N^{1/3}f_2(Z)}}{e^{Nf_0(W)+N^{2/3}f_1(W)+N^{1/3}f_2(W)}}
\end{equation}
with
\begin{align}
f_0(Z)&=-f(\log q)Z+\kappa q^Z+\log(q^Z;q)_\infty,\\
f_1(Z)&=-c(\log q)Z+cq^{Z-\theta},\\
f_2(Z)&=\beta_xZ
\end{align}
and $\beta_x$ as in \eqref{defbeta}.
Then Theorem~\ref{thm:Fredholmconv} is proved via the following series of propositions.

\begin{proposition}\label{prop:kerneldeformation}
For fixed $q\in(0,1)$, $\theta>0$ and $N$ large enough, the contour $\C_\varphi$ with $\varphi\in(0,\pi/4)$ for the kernel $K_x$ can be extended to any $\varphi\in(0,\pi/2)$
without effecting the Fredholm determinant $\det(\id-K_x)_{L^2(\C_\varphi)}$.
\end{proposition}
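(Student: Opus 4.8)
The plan is to deform the $W$- and $W'$-contour $\C_\varphi$ to larger opening angles by a standard Cauchy-type argument, checking that no poles of the integrand are crossed and that the resulting integrals remain absolutely convergent. Concretely, I would first observe that the Fredholm determinant $\det(\id-K_x)_{L^2(\C_\varphi)}$ is the sum $\sum_{n\geq0}\frac{(-1)^n}{n!}\int_{\C_\varphi^n}\det[K_x(W_i,W_j)]_{i,j=1}^n\,\prod\d W_i$, so it suffices to show that each $n$-fold integrand is, as a function of the contour, invariant under the deformation $\varphi\mapsto\varphi'$ for $\varphi<\varphi'<\pi/2$. Since the kernel entry $K_x(W,W')$ itself is (for fixed $W'$) an analytic function of $W$ in the relevant region, and likewise analytic in $W'$ away from the poles coming from the factor $1/(q^Z-q^{W'})$ inside the $Z$-integral, the deformation reduces to: (i) identifying the singularities of $W\mapsto K_x(W,W')$ and $W'\mapsto K_x(W,W')$ in the wedge-shaped region swept out between $\C_\varphi$ and $\C_{\varphi'}$; (ii) bounding the contribution of the contour near infinity so that the arcs closing the deformation vanish.

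For step (i), the candidate singularities of $W\mapsto K_x(W,W')$ are the zeros of $\sin(\pi(W-Z))$ — but these are handled precisely by the structure of $\D_W$, which consists of the vertical line together with small circles around $W+1,\dots,W+k_W$, so the pole at $Z=W$ never lies on $\D_W$ and the sine-inverse residues are tracked explicitly; and the poles of $\log(q^W;q)_\infty$, which occur at $W\in\Z_{\leq0}$, i.e.\ outside the wedge near $\theta>0$. The other potential issue is the factor $q^W$ and the exponential $e^{-Nf_0(W)-\cdots}$, which are entire in $W$. For $W'$, the factor $1/(q^Z-q^{W'})$ contributes a pole at $W'$ such that $q^{W'}=q^Z$ for some $Z\in\D_W$; because $\D_W$ is built to stay to the right of $\C_\varphi$ (equivalently, $\D_W$ does not intersect $\C_\varphi$, and $q^s w$ stays left of $\wt\C_\varphi$ in the original variables), one must check this separation persists as $\C_\varphi$ is fattened — this is where one uses that $\D_W$ can simultaneously be pushed out, since its defining constraints ($\sigma$ small, $d$ small, the vertical line at $\theta+\sigma$ or $\theta+2\sigma$) leave room. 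So the heart of step (i) is a bookkeeping argument that the pair of contours $(\C_{\varphi'},\D_W)$ can still be chosen compatibly, with $k_W$ possibly changing (more residues get picked up as $W$ moves), but the total integrand — vertical-line integral plus residue sum — remaining the meromorphic continuation of the same object.

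For step (ii), the decay at infinity: along $\wt\C_\varphi=\{q^\theta+e^{\I\varphi\sgn(y)}|y|\}$, as $|y|\to\infty$ we have $\Re(W)=\log_q|q^\theta+e^{\I\varphi\sgn(y)}|y||\to-\infty$, so $q^W\to\infty$; but this is compensated by $e^{-N\kappa q^W}$ with $\Re(q^W)\to+\infty$ along the relevant directions (the whole point of the steep-descent choice), giving super-exponential decay of the integrand, uniformly for $\varphi$ in a compact subinterval of $(0,\pi/2)$. This uniformity is what lets the closing arcs be sent to infinity. I would package this as: fix $\varphi<\varphi'$, let $\C_\varphi^R$ and $\C_{\varphi'}^R$ be the truncations at $|y|\leq R$, close them up by an arc $A_R$ at radius $\sim R$, apply Cauchy's theorem (no poles enclosed, by step (i)), and let $R\to\infty$ using the bound from step (ii) to kill $\int_{A_R}$.

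The main obstacle I expect is step (i) — specifically, maintaining the delicate compatibility between the fattened $W$-contour and the $Z$-contour $\D_W$ while keeping track of the changing number $k_W$ of sine-inverse residues. The paper's own introduction flags exactly this (``the extra residues coming from the sine inverse can be controlled''), and the subtlety is that as $\varphi$ increases, $\Re(W)$ can become very negative, so $W$ can be many units to the left of $\theta$, forcing $k_W$ to grow; one must verify that the residue sum plus vertical-line integral still assembles into an analytic function of $W$ with no spurious singularity appearing inside the wedge, and that the associated bounds (needed for Fubini and for the Fredholm expansion to converge) are uniform. Everything else — analyticity of the other factors, the decay estimate at infinity, the reduction to the $n$-fold integrands — is routine given the steep-descent structure already set up in Definition~\ref{def:contours}.
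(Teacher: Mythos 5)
Your outline coincides with the paper's strategy: reduce to the term-by-term deformation of the Fredholm series, use analyticity of the integrand in the swept region (the sine poles being tracked by $\D_W$, the zeros of $(q^W;q)_\infty$ lying on the positive real axis in the $w$-picture, hence outside the region swept between $\wt\C_\varphi$ and $\wt\C_{\varphi'}$), and justify everything by a decay bound that is uniform for $\varphi$ in compact subintervals of $(0,\pi/2)$; the paper packages this as the bound \eqref{kernelbound} plus dominated convergence. The genuine gap is exactly at the point you defer to as ``the main obstacle'': your step (ii) decay argument is not valid for the whole kernel. The compensation by $e^{-N\kappa \Re(q^W)}$ only controls the contribution of the vertical part of the $Z$-contour. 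The kernel also contains the residue terms at $Z=W+1,\dots,W+k_W$, and at the $j$th residue the relevant factor is $e^{N(f_0(W+j)-f_0(W))+\cdots}$, in which $(-\zeta)^{j}=q^{-fNj-cN^{2/3}j+\cdots}$ (cf.\ \eqref{defzeta}) is exponentially \emph{large} in $N$; whether the net exponent is negative is a genuine competition involving all three terms of $f_0$, not a consequence of the steep-descent factor alone. Since $k_W$ grows as $\Re W\to-\infty$ (i.e.\ as $\varphi$ is increased and $s$ gets large), one must prove a quantitative statement of the form $\Re\bigl(f_0(W(s))-f_0(W(s)+j)\bigr)\ge c\,s$ up to a bounded correction for the last few poles near $\theta$, uniformly in $j=1,\dots,k_W$, together with the observation that $k_W$ is only logarithmic in $s$. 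Without this, neither the vanishing of your closing arcs $A_R$ nor the uniform domination of the Fredholm expansion is justified.

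This missing estimate is precisely the paper's Lemma~\ref{lemma:expdecay} (with Lemma~\ref{lemma:residues} playing the analogous role later), and it is the technical core of the proposition: one reaches the pole $W(s)+j$ in two steps, first along $\C_\varphi$ to its vertical projection $W(s_j)$, where the derivative formula \eqref{f0W} yields a loss of $\Re(f_0)$ of order $s$, and then along a vertical segment, where the vertical derivative \eqref{f0Z} shows that $\Re(f_0)$ can increase by at most a bounded amount (with a separate short horizontal detour for a possible last pole to the right of $\theta$). You correctly identify where the difficulty sits, but the proposal does not carry out this control, so as written it does not establish the uniform bound on which the whole deformation argument rests. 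Two minor further points: the decay one actually obtains is exponential, $e^{-cNs}$ as in \eqref{expdecay}, not super-exponential; and near $s=0$ the kernel has a logarithmic singularity, which is why the paper's integrable bound \eqref{kernelbound} carries the $(\log|s|)_-(\log|s'|)_-$ term --- a detail your absolute-convergence step would also need.
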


\begin{proposition}\label{prop:localization}
For any fixed $\delta>0$ and $\varepsilon>0$ small enough, there is an $N_0$ such that
\begin{equation}
\left|\det(\id-K_x)_{L^2(\C_\varphi)}-\det(\id-K_{x,\delta})_{L^2(\C_\varphi^\delta)}\right|<\varepsilon
\end{equation}
for all $N>N_0$ where $\C_\varphi^\delta=\C_\varphi\cap\{w:|w-\theta|\le\delta\}$ and
\begin{equation}\label{defKxdelta}
K_{x,\delta}(W,W')=\frac{q^W\log q}{2\pi\I}\int_{\D_W^\delta}\frac{\d Z}{q^Z-q^{W'}}\frac\pi{\sin(\pi(W-Z))}
\frac{e^{Nf_0(Z)+N^{2/3}f_1(Z)+N^{1/3}f_2(Z)}}{e^{Nf_0(W)+N^{2/3}f_1(W)+N^{1/3}f_2(W)}}
\end{equation}
and $\D_W^\delta=\D_W\cap\{z:|z-\theta|\le\delta\}$.
\end{proposition}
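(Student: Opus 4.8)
The plan is to show that the contribution to the Fredholm determinant coming from the parts of the contours $\C_\varphi$ and $\D_W$ away from the critical point $\theta$ is exponentially small in $N$, so that one may truncate both contours to a $\delta$-neighbourhood of $\theta$ at negligible cost. The key quantitative input is the steep-descent behaviour of $\Re f_0$: since $\theta$ is a double critical point of $f_0$ (this is exactly the content of Definition~\ref{def:kfa} and the discussion around \eqref{defp}), one has $f_0'(\theta)=f_0''(\theta)=0$ and $f_0'''(\theta)\neq 0$, so along the contour $\C_\varphi$ the real part $\Re f_0(W)$ has a strict local minimum at $W=\theta$, while along $\D_W$ the real part $\Re f_0(Z)$ has a strict local maximum at $Z=\theta$. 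First I would establish that $\C_\varphi$ is a steep descent path for $-\Re f_0$ (equivalently that $\Re f_0$ increases as one moves away from $\theta$ along $\C_\varphi$) and that the vertical part of $\D_W$ together with the small circles around the residues is a steep descent path for $\Re f_0$ in the opposite sense; this is where the choice $\varphi<\pi/2$, the perturbation of the vertical line near $\theta$ into the wedge $\{\theta+e^{\I\varphi\sgn(y)}|y|\}$, and the periodicity of $\Re f_0$ in the imaginary direction all get used. The lower-order terms $N^{2/3}f_1$ and $N^{1/3}f_2$ are then subdominant and do not spoil the estimate provided $\delta$ is fixed.

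Granting the steep-descent statements, the second step is a standard kernel estimate: split the $Z$-integral in $K_x(W,W')$ into the part over $\D_W^\delta$ and the part over $\D_W\setminus\D_W^\delta$, and correspondingly split each $W,W'$ integration in the Fredholm expansion into $\C_\varphi^\delta$ and its complement. On $\D_W\setminus\D_W^\delta$ we gain a factor $e^{-cN}$ from $\Re(f_0(Z)-f_0(\theta))\le -c\delta^{2}$ (up to constants), and on $\C_\varphi\setminus\C_\varphi^\delta$ similarly $e^{-cN}$ from $\Re(f_0(\theta)-f_0(W))\le -c\delta^2$; the remaining factors — the ratio $\pi/\sin(\pi(W-Z))$, the Gamma-type factors, $1/(q^Z-q^{W'})$, and $q^W\log q$ — are bounded uniformly, using that $\D_W$ stays at distance $\ge\sigma/2$ from the poles of the sine inverse and that the sine inverse decays exponentially along the vertical direction to make the truncated $Z$-integral over $\D_W$ absolutely convergent with a bound uniform in $W\in\C_\varphi$. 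The residue circles picked up from the sine contribute only a bounded number $k_W$ of terms, each controlled the same way. One then invokes the standard bound $|\det(\id-A)-\det(\id-B)|\le \|A-B\|_1\,e^{1+\|A\|_1+\|B\|_1}$ for trace-class operators, where the trace norms are controlled by Hadamard's inequality applied to the kernel bounds above; the difference $\|K_x-K_{x,\delta}\|_1$ is then $O(e^{-cN})<\varepsilon$ for $N$ large.

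The main obstacle I expect is the first step, namely verifying the steep-descent property of the contours \emph{uniformly} in the relevant parameters. The difficulty singled out in the introduction and in the remark after Theorem~\ref{thm:main} is genuine here: one must simultaneously keep $\D_W$ a steep descent path for $\Re f_0$ and keep it clear of the simple poles of $1/\sin(\pi(W-Z))$ at $Z=W+1,W+2,\dots$, and the compatibility of these two demands is exactly what forces the hypothesis $q^\theta\le 1/2$. Concretely, one has to analyze $\Re\big(\kappa q^Z + \log(q^Z;q)_\infty\big)$ along the perturbed vertical line $\theta+\sigma+\I\R$ (and along the wedge near $\theta$), show it is maximized near $Z=\theta$ within one period, and check that the small circles around $W+k$ lie in the region where $\Re f_0(Z)<\Re f_0(\theta)$; the periodicity of $\Re f_0$ in $\Im Z$ means this must be done period-by-period, with the $q^\theta\le1/2$ condition ensuring the logarithmic term does not create a competing near-maximum. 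Once these geometric facts are in hand — and Proposition~\ref{prop:kerneldeformation} guarantees we may take $\varphi$ as close to $\pi/2$ as convenient to sharpen the descent — the estimate reduces to the routine bookkeeping sketched above.
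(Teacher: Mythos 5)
Your overall architecture (steep descent plus dominated convergence in the Fredholm series) is the same as the paper's, but the central quantitative claim of your second step is wrong, and it is wrong precisely because of the feature you only mention in passing: the vertical periodicity of $\Re(f_0)$. You claim that on $\D_W\setminus\D_W^\delta$ one gains a uniform factor $e^{-cN}$ from $\Re(f_0(Z)-f_0(\theta))\le-c\delta^2$. This fails on the vertical part of $\D_W$: since $\Re(f_0)$ is periodic in $\Im Z$ with period $2\pi/|\log q|$, near every point $Z\approx\theta+\sigma+2\pi\I k/\log q$, $k\neq0$, the value $\Re(f_0(Z))$ returns to within $\O(\sigma^3)$ of $\Re(f_0(\theta))$, so there is no exponential-in-$N$ suppression there at all; the only decay available at those heights is the $N$-independent factor $e^{-\pi|\Im Z|}$ from the sine inverse. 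Consequently the truncation of the $Z$-contour to $\D_W^\delta$ cannot be justified at exponential cost. The paper's proof handles this differently: it first uses the steep-descent property of $\Re(f_0)$ along vertical lines \emph{within one period} to restrict the $Z$-integral to the union $\cup_{k\in\Z}I_k$ of $\delta$-neighbourhoods of all the periodic near-critical points $\theta+2\pi\I k/\log q$, and only then discards the $k\neq0$ pieces using the sine decay $e^{-2\pi^2|k|/|\log q|}$ combined with the $N^{-1/3}$ scale of the local rescaling, which yields an error of order $N^{-1/3}$ — polynomially, not exponentially, small. Your claimed trace-norm bound $\|K_x-K_{x,\delta}\|_1=\O(e^{-cN})$ is therefore not achievable; only a vanishing (not exponentially small) error is, which is all the proposition needs but is not what your argument delivers. (A minor point: since $\theta$ is a cubic critical point, the gain away from it is of order $\delta^3$, not $\delta^2$.)

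A second, smaller gap concerns the residue circles at $Z=W+1,\dots,W+k_W$. These lie at distance at least $1$ from $\theta$, so they are part of $\D_W\setminus\D_W^\delta$, and their suppression is \emph{not} "controlled the same way" as the steep-descent estimate: one must show $\Re\bigl(f_0(W)-f_0(W+j)\bigr)\ge\varepsilon_\varphi>0$ uniformly in $W\in\C_\varphi$, which in the paper is a separate geometric lemma (Lemma~\ref{lemma:residues}) requiring $\varphi$ close to $\pi/2$, the sector/circle-avoidance picture, and the hypothesis $q^\theta\le1/2$; it is also the input needed to show the kernel vanishes for $W\in\C_\varphi\setminus\C_\varphi^\delta$. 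You correctly identify this as the main obstacle in your last paragraph, but the proof as sketched treats it as routine bookkeeping, and in combination with the false uniform $e^{-cN}$ bound on the vertical part of $\D_W$ the argument as written would not go through.
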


It follows immediately from the Cauchy theorem that in the Fredholm determinant $\det(\id-K_x)_{L^2(\C_\varphi^\delta)}$,
the contour $\C_\varphi^\delta$ can be deformed to $\{\theta+e^{\I(\pi-\wt\varphi)\sgn(y)}|y|,y\in[-\delta,\delta]\}$ with another angle $\wt\varphi$ close to $\pi/2$
chosen such that the two contours have the same endpoints.
With a slight abuse of notation, we also use $\varphi$ for the new angle $\wt\varphi$.
Similarly, the integration contour for $Z$ in \eqref{defKxdelta} can be deformed without changing the integral to the contour
$\D_\varphi^\delta=\{\theta+e^{\I\varphi\sgn(t)}|t|:t\in[-\delta,\delta]\}$ if $\sigma$ in Definition~\ref{def:contours} is chosen such that the endpoints of the two contours are the same.

Let us consider the rescaled kernel
\begin{equation}\label{defKxd}
K_{x,\delta}^N(w,w')=N^{-1/3}K_{x,\delta N^{1/3}}(\theta+wN^{-1/3},\theta+w'N^{-1/3})
\end{equation}
with $\D_W^\delta$ replaced by $\D_\varphi^\delta$ in the definition \eqref{defKxdelta} of the kernel $K_{x,\delta N^{1/3}}$.
Let us also define the contour $\C_{\varphi,L}=\{e^{\I(\pi-\varphi)\sgn(y)}|y|,y\in[-L,L]\}$ for any $L>0$ including also $L=\infty$ in which case we mean the infinite contour with $y\in\R$ in the definition.
Note that by the above deformation argument and by simple rescaling,
\begin{equation}
\det(\id-K_{x,\delta})_{L^2(\C_\varphi^\delta)}=\det(\id-K_{x,\delta}^N)_{L^2(\C_{\varphi,\delta N^{1/3}})}.
\end{equation}
Therefore, the proof of Theorem~\ref{thm:Fredholmconv} is a consequence of the following assertions which are proved in subsequent sections.

\begin{proposition}\label{prop:kernelconv}
Let $\varphi\in(0,\pi/2)$ be sufficiently close to $\pi/2$ and let $\varepsilon>0$ be fixed.
There is a small $\delta>0$ and an $N_0$ such that for any $N>N_0$,
\begin{equation}
\left|\det(\id-K_{x,\delta}^N)_{L^2(\C_{\varphi,\delta N^{1/3}})}-\det(\id-K_{x,\delta N^{1/3}}')_{L^2(\C_{\varphi,\delta N^{1/3}})}\right|<\varepsilon
\end{equation}
where
\begin{equation}\label{kernelK'}
K_{x,L}'(w,w')=\frac1{2\pi\I}\int_{\D_{\varphi,L}}\frac{\d z}{(z-w')(w-z)}\frac{e^{\chi z^3/3+c(\log q)^2z^2/2+\beta_x z}}{e^{\chi w^3/3+c(\log q)^2w^2/2+\beta_x w}}.
\end{equation}
on the contour $\D_{\varphi,L}=\{e^{\I\varphi\sgn(y)}|y|:y\in[-L,L]\}$ for $L>0$ including also $L=\infty$ in which case we mean the infinite contour with $y\in\R$ in the definition and the corresponding kernel.
\end{proposition}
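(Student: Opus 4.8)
The plan is a localized steep-descent (Laplace) analysis at the double critical point $\theta$, combined with a dominated-convergence argument for the Fredholm series. Throughout, set $W=\theta+wN^{-1/3}$, $Z=\theta+zN^{-1/3}$, $W'=\theta+w'N^{-1/3}$, so that $w,w'$ run over $\C_{\varphi,\delta N^{1/3}}$ and $z$ over $\D_{\varphi,\delta N^{1/3}}$. First I would record that $\theta$ is a double critical point of $f_0$: using $\frac{\d}{\d z}\log(q^z;q)_\infty=-\log(1-q)-\Psi_q(z)$, which follows from $\eqref{defqgamma}$, one gets $f_0'(z)=-f\log q+\kappa q^z\log q-\log(1-q)-\Psi_q(z)$, $f_0''(z)=\kappa q^z(\log q)^2-\Psi_q'(z)$, $f_0'''(z)=\kappa q^z(\log q)^3-\Psi_q''(z)$, and plugging in $\eqref{defkappa}$--$\eqref{defalpha}$ gives $f_0'(\theta)=f_0''(\theta)=0$, $f_0'''(\theta)=2\chi$; a short computation with the series in $\eqref{kappafseries}$ moreover shows $\chi=-(\log q)^3\sum_{k\ge0}q^{2(\theta+k)}(1-q^{\theta+k})^{-3}>0$. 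Likewise $f_1'(\theta)=0$, $f_1''(\theta)=c(\log q)^2$, and $f_2$ is linear with slope $\beta_x$. Since $f_0$ is analytic on a fixed disc about $\theta$, Taylor's theorem gives, uniformly for $|w|\le\delta N^{1/3}$, that $Nf_0(W)+N^{2/3}f_1(W)+N^{1/3}f_2(W)$ equals a $w$-independent constant $\Theta_N$ plus $\tfrac{\chi}{3}w^3+\tfrac{c(\log q)^2}{2}w^2+\beta_x w+O(|w|^4N^{-1/3})$, and similarly in $z$; the $\Theta_N$ cancels between numerator and denominator in $\eqref{defKxdelta}$.

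\emph{Pointwise convergence and the uniform bound.} After the substitution $\d Z=N^{-1/3}\,\d z$, and for $w,w',z$ in a bounded set one has $q^W\log q\to q^\theta\log q$, $q^Z-q^{W'}=q^\theta\log q\cdot N^{-1/3}(z-w')(1+o(1))$ and $\pi/\sin(\pi(W-Z))=N^{1/3}(w-z)^{-1}(1+o(1))$; collecting the powers of $N$ (one $N^{-1/3}$ from the definition of $K_{x,\delta}^N$, one from $\d Z$, one $N^{1/3}$ from each of the two denominators) shows that the integrand of $K_{x,\delta}^N$ converges to that of $K'_{x,\infty}$, hence $K_{x,\delta}^N(w,w')\to K'_{x,\infty}(w,w')$, and since trivially $K'_{x,\delta N^{1/3}}\to K'_{x,\infty}$, also $K_{x,\delta}^N-K'_{x,\delta N^{1/3}}\to0$ pointwise. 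The quantitative input is a steep-descent bound: for $\varphi$ close enough to $\pi/2$ and $\delta$ small, on $\C_\varphi^\delta$ one has $\Re((W-\theta)^3)\ge c_\varphi|W-\theta|^3$ (as $3(\pi-\varphi)$ sits just above $3\pi/2$), so that, absorbing the quartic remainder of $f_0$ (which for $\delta$ small is at most half the cubic term) and the lower-order contributions of $f_1,f_2$ (which are $O(|w|^2)+O(|w|)$, whatever the signs of $c,x$), one obtains $\Re\bigl(Nf_0(W)+N^{2/3}f_1(W)+N^{1/3}f_2(W)\bigr)-\Re\Theta_N\ge c|w|^3-C$ uniformly in $N$ and in $W\in\C_{\varphi,\delta N^{1/3}}$, with the reversed inequality $\le-(c|z|^3-C)$ on $\D_{\varphi,\delta N^{1/3}}$. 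Combined with $|q^W|$ bounded away from $0$ and $\infty$ and the separation of the two contour families (they meet only at $\theta$, at positive opening angle, so $|z-w|,|z-w'|\gtrsim|z|+|w|,|z|+|w'|$ and hence $|q^Z-q^{W'}|$ and $|\sin(\pi(W-Z))|$ are $\gtrsim N^{-1/3}$ times those), this gives $|K_{x,\delta}^N(w,w')|\le C\,e^{-c|w|^3}\bigl(1+\log^+(1/\min(|w|,|w'|))\bigr)$ uniformly in $N$, and the same bound for $K'_{x,\delta N^{1/3}}$.

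\emph{Conclusion.} Expanding both Fredholm determinants as series $\sum_n\frac{(-1)^n}{n!}\int\det[\,\cdot\,(w_i,w_j)]_{i,j=1}^n\prod\d w_i$ over $\C_{\varphi,\delta N^{1/3}}$, the uniform bound above together with Hadamard's inequality dominates the $n$-th terms by an $N$-independent, $n$-summable, integrable majorant; the pointwise convergence of the integrands then gives, by dominated convergence, that each term of $\det(\id-K_{x,\delta}^N)$ converges to the corresponding term of $\det(\id-K'_{x,\delta N^{1/3}})$. Summing, $\det(\id-K_{x,\delta}^N)_{L^2(\C_{\varphi,\delta N^{1/3}})}-\det(\id-K'_{x,\delta N^{1/3}})_{L^2(\C_{\varphi,\delta N^{1/3}})}\to0$, and fixing first $\varphi$ close to $\pi/2$ and $\delta$ small enough for the two preceding steps, then $N_0$ large, proves the proposition. (Together with the elementary fact $\det(\id-K'_{x,\delta N^{1/3}})\to\det(\id-K'_{x,\infty})$, used in the next proposition, this is the saddle-point step in the proof of Theorem~\ref{thm:Fredholmconv}.)

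\emph{Main obstacle.} The real work is the steep-descent estimate: showing that the \emph{truncated} arcs of $\C_\varphi$ and $\D_\varphi$ — not the full vertical contours, along which $\Re f_0$ is merely periodic, not monotone — are genuine steep-descent/ascent paths for $\Re f_0$, and that the bound $c|w|^3-C$ survives uniformly over a rescaled contour of length $\sim\delta N^{1/3}$. This forces a $q$- and $\theta$-dependent choice of $\delta$ to dominate the quartic and higher Taylor remainder of $f_0$, and one must check that the subleading exponents $N^{2/3}f_1$, $N^{1/3}f_2$ (whose quadratic/linear parts may carry the unfavourable sign for some $(c,x)$) are harmless because they are of lower order in $|w|$. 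A secondary point is the $\frac1{(z-w')(w-z)}$ factor near the common endpoint $\theta$, which is the origin after rescaling: it contributes only an integrable logarithmic singularity once one uses that $\C_\varphi$ and $\D_\varphi$ are separated at a positive opening angle for $\varphi<\pi/2$ — the one residual trace here of the pole-control difficulty that dominates the rest of the paper.
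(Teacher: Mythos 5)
Your proposal is correct and takes essentially the same route as the paper's proof: pointwise convergence of the rescaled kernel via the Taylor expansions \eqref{f0Taylor}--\eqref{f2Taylor} at the double critical point $\theta$, a uniform cubic-decay bound obtained by choosing $\delta$ small enough that the quartic remainder of $f_0$ (and the lower-order $f_1,f_2$ contributions) is absorbed by the $\chi\Re(w^3)/3$ term, and then dominated convergence applied to the Fredholm series. Your extra details --- checking $f_0'(\theta)=f_0''(\theta)=0$, $f_0'''(\theta)=2\chi>0$, the power counting in $N$, and the integrable logarithmic singularity where the $w$- and $z$-contours meet --- merely flesh out steps the paper treats tersely (the last point being handled there by the analogue of the bound \eqref{kernelbound}).
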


\begin{proposition}\label{prop:kernelextend}
With the notation as above,
\begin{equation}
\det(\id-K_{x,\delta N^{1/3}}')_{L^2(\C_{\varphi,\delta N^{1/3}})}\to\det(\id-K_{x,\infty}')_{L^2(\C_{\varphi,\infty})}
\end{equation}
as $N\to\infty$.
\end{proposition}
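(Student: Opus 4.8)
\textbf{Proof proposal for Proposition~\ref{prop:kernelextend}.}

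The plan is to show that the Fredholm determinant $\det(\id-K_{x,L}')_{L^2(\C_{\varphi,L})}$, where $L=\delta N^{1/3}$, converges as $L\to\infty$ to the corresponding Fredholm determinant on the infinite contours. Since both kernels $K_{x,L}'$ and $K_{x,\infty}'$ are given by the same formula \eqref{kernelK'}, only the integration contours differing (a truncated wedge of half-length $L$ for the variables $w,w'$ and for the inner variable $z$ versus the full infinite wedge), this amounts to a tail estimate: one must control the contribution of the parts of the contours with $|w|,|w'|>L$ or $|z|>L$. First I would expand both Fredholm determinants as their defining series $\sum_{n\ge0}\frac{(-1)^n}{n!}\int\cdots\int\det[K'(w_i,w_j)]_{i,j=1}^n\,\prod\d w_i$, so that it suffices to show termwise convergence together with a dominated-convergence bound uniform in $n$ that makes the series exchange of limits legitimate.

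The key analytic input is the decay of the integrand of \eqref{kernelK'} along the wedge contours. On $\C_{\varphi,\infty}$ with $\varphi$ close to $\pi/2$, one has $w=e^{\I(\pi-\varphi)\sgn(y)}|y|$, so $\Re(w^3)$ behaves like $-|y|^3\cos(3(\pi-\varphi))$; choosing $\varphi$ close enough to $\pi/2$ makes $\cos(3(\pi-\varphi))>0$ so that $\Re(\chi w^3/3)\to-\infty$ cubically, hence $|e^{-\chi w^3/3-c(\log q)^2w^2/2-\beta_x w}|$ decays like $e^{-c_1|y|^3}$ for some $c_1>0$ (the quadratic and linear terms being lower order). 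Symmetrically, on $\D_{\varphi,\infty}=\{e^{\I\varphi\sgn(y)}|y|\}$ the variable $z$ gives $\Re(\chi z^3/3)\to+\infty$ cubically, so $|e^{\chi z^3/3+\cdots}|$ decays like $e^{-c_2|z|^3}$. The denominator factor $1/((z-w')(w-z))$ is harmless away from the common endpoints; near the endpoints where the wedges for $z$ and for $w,w'$ meet, I would note that the contours are chosen (via the $\sigma$-perturbation in Definition~\ref{def:contours}, i.e. $\D_{\varphi}$ lies to the right of $\C_\varphi$ near $\theta$, which after rescaling and translation means the $z$-wedge is separated from the $w$-wedge) so that $|z-w'|$ and $|w-z|$ stay bounded below, giving a uniform bound on $|K_{x,L}'(w,w')|\le C e^{-c_1(|w|^3+|w'|^3)}$ with $C,c_1$ independent of $L$. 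By Hadamard's inequality $|\det[K'(w_i,w_j)]|\le n^{n/2}\prod_i\sup_j|K'(w_i,\cdot)|$, which together with the Gaussian-beating cubic decay yields a summable bound $\sum_n \frac{n^{n/2}}{n!}M^n<\infty$ for the series, uniformly in $L\in(0,\infty]$.

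With this uniform integrable bound in hand, the convergence follows by dominated convergence: as $L\to\infty$ the truncated contours $\C_{\varphi,L}$ and $\D_{\varphi,L}$ exhaust the infinite ones, the integrand of each fixed-$n$ term converges pointwise (indeed the missing tails vanish by the cubic decay), and the dominating bound controls the full series. Hence each term of the Fredholm series converges and the whole series converges to $\det(\id-K_{x,\infty}')_{L^2(\C_{\varphi,\infty})}$. \textbf{The main obstacle} I anticipate is the bookkeeping near the common endpoints of the $w$- and $z$-wedges: one has to make sure that after the rescaling \eqref{defKxd} and the endpoint-matching deformations described just before the statement, the inner contour $\D_{\varphi,L}$ and the outer contour $\C_{\varphi,L}$ remain genuinely separated (so $|w-z|,|z-w'|$ are bounded away from $0$ uniformly in $L$), since the two wedges share the same apex direction and a naive choice would let them touch; this is exactly where the small shift $\sigma$ and the perturbation at $\theta$ in Definition~\ref{def:contours} are used, and the argument must invoke that geometry carefully rather than treating the denominator as a harmless factor.
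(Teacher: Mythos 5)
Your proposal follows essentially the same route as the paper, which simply observes that the integrand of \eqref{kernelK'} has cubic exponential decay in $w$ and $z$ along $\C_{\varphi,\infty}$ and $\D_{\varphi,\infty}$ and concludes by dominated convergence on the Fredholm series, exactly as you do. Two small slips worth fixing: the signs in your intermediate claims are reversed (for $\varphi$ close to $\pi/2$ one has $\Re(\chi w^3/3)\to+\infty$ along $\C_{\varphi,\infty}$ and $\Re(\chi z^3/3)\to-\infty$ along $\D_{\varphi,\infty}$, which is precisely what yields the decay you correctly assert), and the wedges $\C_{\varphi,L}$ and $\D_{\varphi,L}$ as defined share their apex at the origin, so $|w-z|$ and $|z-w'|$ are \emph{not} uniformly bounded below; instead one tolerates an at worst logarithmic singularity at the apex, as in the bound \eqref{kernelbound} used earlier, which is square-integrable and does not affect the Hadamard/dominated-convergence argument.
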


\begin{proposition}\label{prop:rewritekernel}
We can rewrite the Fredholm determinant
\begin{equation}
\det(\id-K_{x,\infty}')_{L^2(\C_{\varphi,\infty})}=\det(\id-K_{\Ai,x})_{L^2(\R_+)}=F_{\rm GUE}(x)
\end{equation}
where
\begin{equation}
K_{\Ai,x}(a,b)=\int_0^\infty\d\lambda\Ai(x+a)\Ai(x+b)
\end{equation}
and $F_{\rm GUE}$ is the GUE Tracy--Widom distribution function.
\end{proposition}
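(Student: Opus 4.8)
The plan is to recognize $K_{x,\infty}'$ as a conjugated and reparametrized version of the standard Airy kernel in its ``double contour integral'' form, then apply the classical change of variables that turns this representation into the projection-type formula $K_{\Ai,x}(a,b)=\int_0^\infty \d\lambda\,\Ai(x+a+\lambda)\Ai(x+b+\lambda)$ (the displayed formula in the statement appears to have a typo, suppressing the $\lambda$-shifts; I would state it with the shifts and note $F_{\rm GUE}(x)=\det(\id-K_{\Ai})_{L^2(x,\infty)}$, the two being equivalent). The first step is cosmetic: in \eqref{kernelK'} with $L=\infty$, complete the cube by the affine substitution that removes the quadratic term. Writing $\chi z^3/3+c(\log q)^2 z^2/2+\beta_x z$, I would set $z=\chi^{-1/3}\tilde z-c(\log q)^2/(2\chi)$ and similarly for $w$; the Jacobians from $\d z$ and from the factors $(z-w')(w-z)$ cancel appropriately (each contour variable scales by $\chi^{-1/3}$, the measure $\d z$ contributes one such factor, the product $(z-w')(w-z)$ contributes two in the denominator, and the rescaling of the $L^2$ space on $\C_{\varphi,\infty}$ contributes one more), so that after collecting the shift into the definition of a new real parameter — which one checks equals $-\chi^{1/3}x$ plus the explicit constant, i.e.\ reproduces exactly $\beta_x$ up to the intended $x$ — the kernel becomes the standard Airy double-contour kernel evaluated with a spectral shift by $x$.

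The second step is to identify the resulting object with the Airy kernel. The standard fact (see e.g.\ Tracy--Widom~\cite{TW94} or the exposition in~\cite{BCF12}) is that
\[
\frac1{2\pi\I}\int_{\langle}\d z\,\frac1{2\pi\I}\oint_{\rangle}\d w\,\frac{e^{z^3/3-xz}}{e^{w^3/3-xw}}\frac{1}{(z-w)}\cdot(\cdots)
\]
with appropriately oriented contours — $z$ along a wedge opening to the left around angle $\varphi$ near $\pi/2$, $w$ along a wedge opening to the right — reproduces the Airy kernel. In our case the $w$-contour is $\C_{\varphi,\infty}=\{e^{\I(\pi-\varphi)\sgn(y)}|y|\}$ (a leftward-opening wedge) and the $z$-contour is $\D_{\varphi,\infty}=\{e^{\I\varphi\sgn(y)}|y|\}$ (rightward-opening), and the denominator $(z-w')(w-z)$ with one factor of each variable is exactly the kernel-composition structure $\int K_1(w,z)K_2(z,w')$; so after the rescaling of Step~1 the double integral is literally the contour representation of $K_{\Ai}(x+a,x+b)$ in the convention where $\Ai(x)=\frac1{2\pi\I}\int_{\D_{\varphi,\infty}}e^{z^3/3-xz}\,\d z$. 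I would verify the convergence of these integrals (the cubic exponential gives decay along both wedges as long as $\varphi$ is close enough to $\pi/2$, which is exactly the hypothesis inherited from the earlier propositions) and the exchangeability of the $z$- and $w$-integrations with the Fredholm expansion, which is routine given the Gaussian-type bounds. Finally, the last equality $\det(\id-K_{\Ai})_{L^2(x,\infty)}=F_{\rm GUE}(x)$ is the definition of the GUE Tracy--Widom distribution~\cite{TW94}, so nothing further is needed.

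The main obstacle is bookkeeping rather than conceptual: one must track precisely how the affine shift in Step~1 interacts with the three factors $f_0,f_1,f_2$ that were bundled into the exponents $\chi w^3/3+c(\log q)^2 w^2/2+\beta_x w$, and confirm that the constant produced by completing the cube combines with $\beta_x$ from \eqref{defbeta} to give exactly a clean spectral shift by $x$ — i.e.\ that the $c$-dependent pieces cancel. This is the reason $\beta_x$ was defined with the particular constant $c^2(\log q)^4/(4\chi)$ in \eqref{defbeta}; the check is that $c^2(\log q)^4/(4\chi) - \chi\cdot\bigl(c(\log q)^2/(2\chi)\bigr)^2\cdot\tfrac13\cdot 3 = 0$ up to the cubic-correction term, so the residual linear coefficient is precisely $-\chi^{1/3}x$. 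Once this is confirmed the rest is the standard reduction, so I would allocate most of the write-up to displaying the substitution and the resulting standard kernel cleanly, and only a sentence to the classical identification with $F_{\rm GUE}$.
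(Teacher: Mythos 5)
Your proposal is correct and, at bottom, runs on the same engine as the paper's proof; the differences are a matter of ordering and of what you outsource to a citation. The paper keeps the quadratic term $c(\log q)^2z^2/2$ in the exponent, factorizes $K_{x,\infty}'=AB$ through $\frac1{z-w}=\int_0^\infty e^{-\lambda(z-w)}\,\d\lambda$ (valid since $\Re(z-w)\ge0$ for $z\in\D_{\varphi,\infty}$, $w\in\C_{\varphi,\infty}$), cycles to $\det(\id-BA)_{L^2(\R_+)}$, and evaluates both contour integrals with the general identity $\frac1{2\pi\I}\int e^{az^3/3+bz^2+cz}\,\d z=a^{-1/3}e^{2b^3/(3a^2)-bc/a}\Ai\bigl(b^2/a^{4/3}-c/a^{1/3}\bigr)$; the $c$-dependence then survives only in a factor $e^{c(\log q)^2(a-b)/(2\chi)}$, which disappears under conjugation, and a final rescaling gives $K_{\Ai,x}$. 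You instead complete the cube first --- and your bookkeeping is right: with $\beta_x$ as in \eqref{defbeta} the residual linear coefficient is exactly $-\chi^{1/3}x$, so after rescaling by $\chi^{-1/3}$ one gets the standard cube-only double-contour kernel with spectral shift $x$ --- and then quote the standard identification with $F_{\rm GUE}$. What your route buys is that no conjugating prefactor ever appears; what it costs is (i) a small deformation step you gloss over, since the affine substitution shifts both wedges off the origin and they must be moved back (harmless, by the cubic decay, and the relative position of the $z$- and $w$-contours, hence $\Re(z-w)\ge0$, is preserved because both are shifted by the same real constant), and (ii) the ``standard fact'' you invoke is itself proved by exactly the $AB$/$BA$ factorization and Airy contour representation that the paper writes out, so nothing is really saved unless you are content to cite it from \cite{TW94} or \cite{BCF12}. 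Two accuracy points: your display of the standard fact is garbled (the kernel should carry the full denominator $(z-w')(w-z)$, not a lone $1/(z-w)$ with unspecified extra factors), and you are right that the statement's formula for $K_{\Ai,x}$ omits the $\lambda$-shifts --- the paper's own computation produces $\int_0^\infty\d\lambda\,\Ai(x+a+\lambda)\Ai(x+b+\lambda)$, as you noted.
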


\begin{remark}
The reason for extending the Fredholm determinant formula for \mbox{$\varphi\in(0,\pi/2)$} in Proposition~\ref{prop:kerneldeformation} is that
for $\varphi<\pi/4$ the contour $\C_\varphi$ is not steep descent for $-\Re(f_0)$.
Not only the proof of Lemma~\ref{lemma:steep} breaks down, but it happens in general for $\varphi<\pi/4$ that there are higher values of $-\Re(f_0)$ along $\C_\varphi$ than at $\theta$.
\end{remark}

\subsection{Steep descent contours}

Using the values of $\kappa$, $f$ and $\chi$ from Definition~\ref{def:kfa} and the formulas from Definition~\ref{def:qdigamma},
one can rewrite the derivatives of the functions $f_0$, $f_1$ and $f_2$ in terms of $q$-digamma functions.
One has the following Taylor expansions of these functions around $\theta$:
\begin{align}
f_0(Z)&=f_0(\theta)+\frac\chi3(Z-\theta)^3+\O((Z-\theta)^4),\label{f0Taylor}\\
f_1(Z)&=f_1(\theta)+\frac{c(\log q)^2}2(Z-\theta)^2+\O((Z-\theta)^3),\label{f1Taylor}\\
f_2(Z)&=f_2(\theta)+\beta_x(Z-\theta).\label{f2Taylor}
\end{align}

\begin{figure}
\begin{center}
\includegraphics[width=250pt]{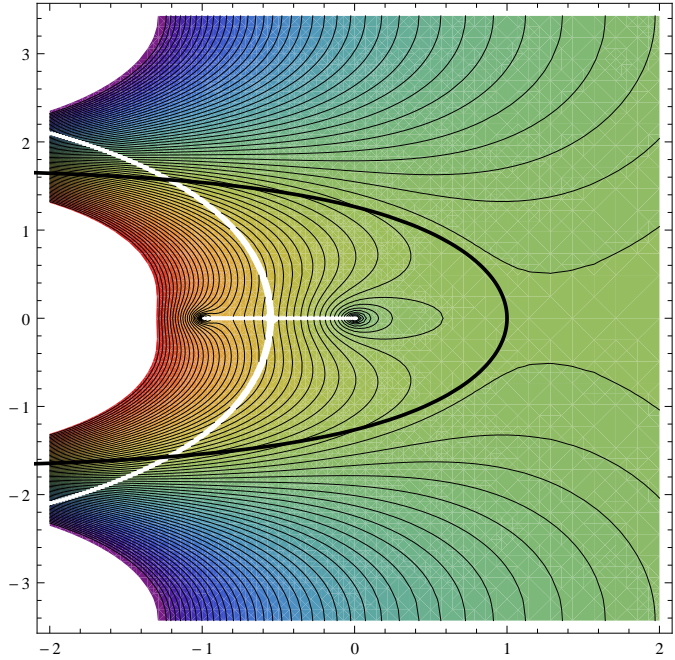}
\end{center}
\caption{The contour plot of the function $\Re(f_0)$ and the integration contour $\C_\varphi$ in thick black.
The function $\Re(f_0)$ is periodic in the vertical direction, one period is shown.
The steep descent property of $\C_\varphi$ can be seen.\label{fig:function_and_curves}}
\end{figure}

For the asymptotic analysis, the behaviour of the function $\Re(f_0)$ is crucial, since it governs the leading term of the exponent in the kernel, see \eqref{Ktil3}.
A contour plot of this function is shown on Figure~\ref{fig:function_and_curves}.
An interesting feature of $\Re(f_0)$ is that it is vertically periodic with period $\I2\pi/|\log q|$.
As it can be seen on the figure, the imaginary part of a point along the contour $\C_\varphi$ remains bounded,
therefore the convergence of the Fredholm determinant does not come from the decay of the sine inverse but from that of the term $q^W$ in $f_0(W)$.
On the other hand, because of the vertical periodicity of $\Re(f_0)$, to show the convergence of the integral in $Z$, we need to use the decay from the sine inverse.
A similar issue of periodicity does not appear in~\cite{BCF12}, hence it was not needed at the corresponding point of the analysis for the semi-discrete directed polymers to use the decay from the sine inverse.
However this decay was used in the derivation of the pre-asymptotic Laplace transform formula for the semi-discrete directed polymer partition function in~\cite{BCF12} and also in~\cite{BC11}.

The derivative $f_0'$ can be expressed as
\begin{equation}\label{f0'}
f_0'(Z)=\frac{\Psi_q'(\theta)}{\log q}(q^{Z-\theta}-1)+\Psi_q(\theta)-\Psi_q(Z).
\end{equation}
It follows by derivation from Definition~\ref{def:qdigamma} that the $q$-digamma function and its derivative have the useful series representations
\begin{align}
\Psi_q(Z)&=-\log(1-q)+\log q\sum_{k=0}^\infty \frac{q^{Z+k}}{1-q^{Z+k}},\label{Psiseries}\\
\Psi_q'(Z)&=(\log q)^2\sum_{k=0}^\infty \frac{q^{Z+k}}{(1-q^{Z+k})^2}.\label{Psi'series}
\end{align}
Combining \eqref{f0'} with \eqref{Psiseries}--\eqref{Psi'series}, we get
\begin{equation}\label{f0'series}
f_0'(Z)=-\log q \sum_{k=0}^\infty \frac{q^{2k}(q^\theta-q^Z)^2}{(1-q^{\theta+k})^2(1-q^{Z+k})}.
\end{equation}
Note that the series \eqref{kappafseries} for $\kappa$ and $f$ also follow from \eqref{Psiseries}--\eqref{Psi'series}.

\begin{lemma}\label{lemma:steep}$ $
\begin{enumerate}
\item
If $\varphi\in[\pi/4,\pi/2]$ and $q^\theta\le1/2$, then the contour $\C_\varphi$ given in Definition~\ref{def:contours} is steep descent for the function $-\Re(f_0)$
in the sense that $-\Re(f_0)$ attains its maximum at $\theta$ and it is decreasing along $\C_\varphi$ towards both ends.

\item
The function $\Re(f_0)$ is periodic on $\{\theta+\gamma+\I t,t\in\R\}$ with period $2\pi/|\log q|$.
The contour $\{\theta+\gamma+\I t,t\in[\pi/\log q,-\pi/\log q]\}$ for any $\gamma\ge0$ is steep descent for the function $\Re(f_0)$ in the same sense as above.
\end{enumerate}
\end{lemma}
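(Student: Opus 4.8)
The plan is to establish both parts by working directly with the series representation \eqref{f0'series} for $f_0'(Z)$, which already factors out the vanishing at $Z=\theta$ and shows that each term in the sum carries the universal prefactor $(q^\theta-q^Z)^2$. The key observation is that along a curve $Z(y)$ parametrized by arclength (or by the real parameter $y$ in Definition~\ref{def:contours}), the derivative $\tfrac{\d}{\d y}\Re(f_0(Z(y)))$ equals $\Re\bigl(f_0'(Z(y))\,Z'(y)\bigr)$, so it suffices to show this quantity has the correct sign on each half of the contour. Because the series \eqref{f0'series} is a sum of terms of the form $-\log q\,(q^\theta-q^Z)^2/\bigl((1-q^{\theta+k})^2(1-q^{Z+k})\bigr)$ with $-\log q>0$ and $(1-q^{\theta+k})^2>0$, the sign of each term is governed by $\Re\bigl((q^\theta-q^Z)^2 Z'(y)/(1-q^{Z+k})\bigr)$; if I can show that for all $k\ge 0$ this real part is negative (resp.\ positive) on the upper (resp.\ lower) half of $\C_\varphi$, summing over $k$ gives the monotonicity. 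The maximum of $-\Re(f_0)$ at $\theta$ then follows since $f_0'(\theta)=0$ and $-\Re(f_0)$ decreases away from $\theta$ along the whole contour.

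For part (1), I would first compute $Z'(y)$ on $\C_\varphi = \{\log_q(q^\theta+e^{\I\varphi\sgn(y)}|y|)\}$. Writing $x(y)=q^\theta+e^{\I\varphi\sgn(y)}|y|$ so that $Z(y)=\log_q x(y)=\log x(y)/\log q$, we have $q^Z=x$ and $Z'(y)=x'(y)/(x(y)\log q)$ where $x'(y)=e^{\I\varphi\sgn(y)}\sgn(y)$. Substituting $q^Z=x$ and $q^\theta=x(0)$, the quantity $(q^\theta-q^Z)^2 Z'(y)$ becomes $(x(0)-x)^2 e^{\I\varphi\sgn(y)}\sgn(y)/(x\log q)$. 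On the upper half ($y>0$), $x(0)-x = -e^{\I\varphi}|y|$, so $(x(0)-x)^2 = e^{2\I\varphi}|y|^2$ and the product is $e^{2\I\varphi}|y|^2 \cdot e^{\I\varphi}/(x\log q) = e^{3\I\varphi}|y|^2/(x\log q)$. With $\log q<0$, the sign of the contribution to $\tfrac{\d}{\d y}\Re f_0$ from the $k$-th term reduces to $-\Re\bigl(e^{3\I\varphi}/(x(y)(1-x q^k))\bigr)$ times a positive factor. The hard part here is controlling the argument of $x(y)(1-x(y)q^k)$: one needs that for all $y>0$ and all $k\ge 0$, $\arg\bigl(x(y)(1-x(y)q^k)\bigr) + 3\varphi$ stays in a half-plane making the real part negative. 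Since $x(y)=q^\theta+e^{\I\varphi}|y|$ traces a ray from $q^\theta$ into the upper half-plane, $\arg x(y)\in[0,\varphi)$, and $1-x(y)q^k$ ranges over $1-q^\theta q^k$ (positive, since $q^\theta q^k<1$) minus a ray of argument $\varphi$; using $q^\theta\le 1/2$ one checks $|x(y)q^k|$ stays bounded enough that $\arg(1-x(y)q^k)\in(-\pi/2,0]$ — more precisely in a cone that, combined with $\arg x\in[0,\varphi)$ and $3\varphi\le 3\pi/2$, keeps the total in $(\pi/2,3\pi/2)$ so the cosine (hence the real part) is negative. The condition $q^\theta\le 1/2$ enters exactly to guarantee $|1-x q^k|$ never has its argument swing too far negative. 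The symmetric computation handles $y<0$.

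For part (2), the periodicity is immediate: on the vertical segment $Z=\theta+\gamma+\I t$ we have $q^Z = q^{\theta+\gamma}e^{\I t\log q}$, which is periodic in $t$ with period $2\pi/|\log q|$, and $f_0(Z)-f_0(\theta)$ depends on $Z$ only through $q^Z$ (in the terms $\kappa q^Z$ and $\log(q^Z;q)_\infty$) and through the linear term $-f(\log q)(Z-\theta)$ whose real part is $-f(\log q)\gamma$, constant in $t$; hence $\Re(f_0)$ is $2\pi/|\log q|$-periodic in $t$. For steep descent, parametrize by $t$: $Z'=\I$, and from \eqref{f0'series}, $\tfrac{\d}{\d t}\Re f_0 = \Re(\I f_0'(Z)) = -\Im f_0'(Z)$; I then need $-\Im f_0'>0$ for $t\in(\pi/\log q,0)$ and $<0$ for $t\in(0,-\pi/\log q)$, i.e.\ $f_0'$ has negative imaginary part on the lower part of the segment and positive on the upper — equivalently $\Im\bigl((q^\theta-q^Z)^2/(1-q^{Z+k})\bigr)$ has a definite sign on each half, which one extracts from $q^Z=q^{\theta+\gamma}e^{\I t\log q}$ by a direct argument computation analogous to part (1) but simpler since now $|q^Z|=q^{\theta+\gamma}$ is constant. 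The main obstacle throughout is the uniform-in-$k$ argument bookkeeping needed to sum the series while preserving strict monotonicity; everything else is a mechanical consequence of \eqref{f0'series}.
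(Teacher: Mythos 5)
Your overall strategy (differentiate along the contour, use the series \eqref{f0'series}, and check the sign of each $k$-summand separately) is exactly the paper's, and your treatment of part (2) is essentially the paper's computation. The gap is in part (1), at precisely the step you flag as ``the hard part''. The per-$k$ sign you need, namely $\Re\bigl(e^{3\I\varphi}/(x(y)(1-x(y)q^k))\bigr)\le 0$ for all $y>0$, is true, but your phase bookkeeping does not establish it. First, the claimed cone $\arg(1-x(y)q^k)\in(-\pi/2,0]$ is false: as $y\to\infty$ the point $1-x(y)q^k$ moves along a ray of direction $-e^{\I\varphi}$, so its argument tends to $\varphi-\pi$, which is below $-\pi/2$ for every $\varphi<\pi/2$. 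Second, even granting the correct ranges $\arg x\in[0,\varphi)$ and $\arg(1-xq^k)\in(\varphi-\pi,0]$, the decoupled estimate only puts the total phase $3\varphi-\arg x-\arg(1-xq^k)$ in the interval $(2\varphi,2\varphi+\pi)$, which leaves $[\pi/2,3\pi/2]$ as soon as $\varphi>\pi/4$ --- i.e.\ exactly in the regime $\varphi$ close to $\pi/2$ that the rest of the paper needs. The two arguments are coupled through the same $y$, and without exploiting that coupling the half-plane argument does not close. Relatedly, your explanation of where $q^\theta\le1/2$ enters (controlling $\arg(1-xq^k)$) is not correct; that hypothesis plays no role in the geometry of $1-xq^k$.

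The paper closes this step by brute force rather than by phase estimates: writing $W(s)=\log_q(q^\theta+e^{\I\varphi}s)$ and rationalizing, the real part of each summand becomes, up to an explicit positive factor,
\begin{equation*}
-q^k s^2\cos\varphi+(1-2q^{\theta+k})\,s\cos2\varphi+q^\theta(1-q^{\theta+k})\cos3\varphi ,
\end{equation*}
and for $\varphi\in[\pi/4,\pi/2]$ one has $\cos\varphi\ge0$, $\cos2\varphi\le0$, $\cos3\varphi\le0$, while $q^\theta\le1/2$ gives $1-2q^{\theta+k}\ge0$; hence all three terms are nonpositive and the steep descent follows term by term. This explicit expansion is exactly the coupling your sketch is missing, and it shows the true role of the hypothesis $q^\theta\le1/2$ (it pairs with $\cos2\varphi\le0$). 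To repair your proof you should carry out this expansion (or an equivalent exact evaluation of the real part) instead of the cone argument; the remaining points you raise (summing over $k$, and the vertical-line computation in part (2), where $|1-q^{Z+k}|\ge1-q^{\theta+\gamma+k}\ge1-q^{\theta+k}$ for $\gamma\ge0$ gives the sign) are unproblematic.
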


\begin{proof}
\begin{enumerate}
\item
Consider $W(s)=\log_q(q^\theta+e^{\I\varphi}s)$ for $s\ge0$ which parameterizes the part of $\C_\varphi$ above the real axis.
It is elementary to see using \eqref{f0'series} that
\begin{equation}\begin{aligned}
&-\frac{\d}{\d s}\Re(f_0(W(s)))\\
&\qquad=\Re\sum_{k=0}^\infty \frac{q^{2k}e^{3\I\varphi}s^2}{(1-q^{\theta+k})^2(1-q^{\theta+k}-e^{\I\varphi}sq^k)(q^\theta+e^{\I\varphi}s)}\\
&\qquad=\sum_{k=0}^\infty \frac{q^{2k}s^2(-q^ks^2\cos\varphi+(1-2q^{\theta+k})s\cos2\varphi+q^\theta(1-q^{\theta+k})\cos3\varphi)}
{(1-q^{\theta+k})^2|1-q^{\theta+k}-e^{\I\varphi}sq^k|^2|q^\theta+e^{\I\varphi}s|^2}.\label{f0W}
\end{aligned}\end{equation}
It is clear that for the given choice of $\varphi$, $q$ and $\theta$, all the three terms in the numerator of \eqref{f0W} are non-positive,
hence the derivative \eqref{f0W} is non-positive (and negative except for $s=0$).
The proof for negative values of $s$ is similar.

\item
Periodicity is obvious.
Let $Z(t)=\theta+\gamma+\I t$.
Then it can be seen that
\begin{equation}\label{f0Z}
\frac{\d}{\d t}\Re(f_0(Z(t)))
=-\sin(t\log q)\log q\sum_{k=0}^\infty q^{\theta+\gamma+k}\left(\frac1{(1-q^{\theta+k})^2}-\frac1{|1-q^{\theta+\gamma+\I t+k}|^2}\right).
\end{equation}
As long as $\gamma\ge0$, the difference in parentheses on the right-hand side of \eqref{f0Z} is also non-negative (and positive except for $t=0$).
Keeping in mind that $\log q<0$, this proves the steep descent property.
\end{enumerate}
\end{proof}

\begin{remark}
Remark that for $\varphi=\pi/4$, Lemma~\ref{lemma:steep} holds for any $\theta>0$.
\end{remark}

\subsection{Contour deformation and localization}

In this section, we prove Propositions~\ref{prop:kerneldeformation} and~\ref{prop:localization} about the deformation and localization of the Fredholm determinant of the kernel $K_x$.
In both proofs we will parameterize the variable $W$ as
\begin{equation}\label{Wparam}
W(s)=\log_q(q^\theta+e^{\I\varphi\sgn(s)}|s|),
\end{equation}
but we only focus on the values $s\ge0$ since the argument for $s<0$ is the same by symmetry.

\begin{lemma}\label{lemma:expdecay}
The kernel $K_x(W(s),W')$ is exponentially small in $s$.
More precisely,
\begin{equation}\label{expdecay}
|K_x(W(s),W')|\le\exp(-cNs)
\end{equation}
for some constant $c>0$ which is uniformly positive if $\varphi$ is bounded away from $\pi/2$ and for all $N>N_0$ and $s>s_0$ with some thresholds $N_0$ and $s_0$.
\end{lemma}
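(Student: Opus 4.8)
The plan is to bound the kernel $K_x(W(s),W')$ in \eqref{Ktil3} by estimating the ratio of exponentials, the sine-inverse factor, the prefactor $q^W\log q$, and the denominator $q^Z-q^{W'}$ along the integration contour $\D_W$. The dominant contribution comes from the leading term $e^{N(f_0(Z)-f_0(W))}$, so the crucial point is to show that $\Re(f_0(W(s)))-\Re(f_0(\theta))$ grows at least linearly in $s$ for $s$ large, while $\Re(f_0(Z))$ stays controlled along $\D_W$ by the steep-descent estimates of Lemma~\ref{lemma:steep}. First I would record the exact form of $f_0$ and observe that $\log(q^W;q)_\infty$ is bounded on $\C_\varphi$ (the argument stays in a bounded region since $|q^W|<1$ there), so the $s$-dependence of $\Re(f_0(W(s)))$ is governed by the term $-f(\log q)W(s)+\kappa q^{W(s)}$. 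Since $W(s)=\log_q(q^\theta+e^{\I\varphi\sgn(s)}|s|)\to\infty$ roughly like $\log_q|s|$ as $s\to\infty$ with a positive real part growing, one gets $-\Re(f_0(W(s)))\to-\infty$; more precisely $-\Re(\log q)\cdot\Re(W(s))$ dominates and is comparable to $\log|s|$, while a careful look shows the needed linear-in-$s$ decay actually comes from combining the steep-descent monotonicity on the bounded part with the genuine growth of $\Re(W(s))$ for large $s$ — so I would split into the region $s\le s_0$, handled by Lemma~\ref{lemma:steep} part (1) giving strict decrease of $-\Re(f_0)$ away from $\theta$, and the region $s>s_0$, where $|q^{W(s)}|$ is small and $\Re(W(s))$ is large.

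Next I would handle the $Z$-integral over $\D_W$. On the vertical part $\theta+\sigma+\I\R$ (or $\theta+2\sigma+\I\R$), the factor $\pi/\sin(\pi(W-Z))$ decays exponentially in $|\Im Z|$ because $\Im(W(s))$ stays bounded (as emphasized after \eqref{f0'series} and in the discussion of Figure~\ref{fig:function_and_curves}), so the vertical integral converges and is bounded by a constant times $e^{N\max_{Z\in\D_W}\Re(f_0(Z))}$ times lower-order exponentials in $N^{2/3}$ and $N^{1/3}$ coming from $f_1,f_2$. By Lemma~\ref{lemma:steep} part (2), $\Re(f_0)$ on the vertical line $\theta+\sigma+\I t$ is maximized at $t=0$, where its value is $\Re(f_0(\theta+\sigma))$, which for $\sigma$ small is $\le\Re(f_0(\theta))+\O(\sigma^3)$ (in fact $\ge$, since $\theta$ is a local max of $-\Re f_0$; one uses the Taylor expansion \eqref{f0Taylor} with $\chi>0$, so $\Re(f_0(\theta+\sigma))=\Re(f_0(\theta))+\frac{\chi}{3}\sigma^3+\O(\sigma^4)$). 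Thus $N\Re(f_0(Z))\le N\Re(f_0(\theta))+\O(N\sigma^3)$. The residue circles around $W+1,\dots,W+k_W$ contribute terms of the same type since at those points $q^Z=q^{W+j}$ has even smaller modulus, so $\Re(f_0)$ there is also $\le\Re(f_0(\theta))+$ small; I would absorb these using the same bounds. The prefactor contributes $|q^W\log q|\le C|q^{W(s)}|$, which for $s>s_0$ is itself exponentially small — this is a bonus but not strictly needed — and the denominator $|q^Z-q^{W'}|$ is bounded below because $q^Z$ lies near $q^\theta\cdot q^\sigma$ and $q^{W'}$ ranges over a bounded set separated from it by the geometry of the contours (this separation is exactly what Definition~\ref{def:contours} arranges).

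Putting it together: $|K_x(W(s),W')|\le C\exp\bigl(N(\Re(f_0(\theta+\sigma))-\Re(f_0(W(s))))+\O(N^{2/3})\bigr)$, and since $\Re(f_0(\theta+\sigma))=\Re(f_0(\theta))+\O(\sigma^3)$ while $\Re(f_0(\theta))-\Re(f_0(W(s)))\ge c_1 s$ for $s>s_0$ by the first step, the exponent is $\le -c_1 Ns/2$ once the $\O(N^{2/3})$ and $\O(N\sigma^3)$ pieces are dominated by choosing $\sigma$ small and $N$ large; this gives \eqref{expdecay} with $c=c_1/2$. For $s\le s_0$ one does not need the exponential statement — it is \eqref{expdecay} only in the regime $s>s_0$ as stated — so uniformity over $\varphi$ bounded away from $\pi/2$ follows because all the constants ($c_1$ from the steep-descent derivative \eqref{f0W}, the sine decay rate, the contour-separation bound) are uniform there; as $\varphi\to\pi/2$ the coefficient $\cos\varphi,\cos2\varphi,\cos3\varphi$ in \eqref{f0W} and hence $c_1$ degrade, which is why the hypothesis excludes that limit.

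\textbf{Main obstacle.} The delicate part is the linear-in-$s$ lower bound $\Re(f_0(\theta))-\Re(f_0(W(s)))\ge c_1 s$ for large $s$: the steep-descent lemma only gives monotonic (not quantitatively linear) decrease, and the natural growth of $\Re(W(s))$ is only logarithmic in $s$. I expect one must instead extract the linear rate directly from \eqref{f0W}, integrating the derivative $-\frac{\d}{\d s}\Re(f_0(W(s)))$: for large $s$ the $k=0$ term behaves like $\frac{-q^0 s^2\cos\varphi+\cdots}{|{-e^{\I\varphi}s}|^2|e^{\I\varphi}s|^2}\sim \frac{-\cos\varphi}{s^2}\cdot s^2\cdot\frac{1}{s^2}=\O(s^{-2})$, which integrates to $\O(1/s)$, \emph{not} linear — so the bound $\exp(-cNs)$ as literally written must actually be read together with the prefactor $|q^{W(s)}|^N$, which \emph{is} exponentially small in $N$ but only polynomially small in $s$. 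I would therefore re-examine whether the intended statement is $|K_x|\le \exp(-cN)\cdot(\text{something summable in }s)$ and adjust the bookkeeping accordingly — reconciling the stated clean form \eqref{expdecay} with the honest decay rates is the step I would spend the most care on, very likely by exploiting that $N$ is large first (so $|q^{W(s)}|^N\le e^{-cN}$ uniformly for $s\ge s_0$) and that the residual $s$-dependence is integrable, which is all that the Fredholm-determinant estimates downstream actually require.
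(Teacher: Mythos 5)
Your overall skeleton---extract the decay from $e^{N(f_0(Z)-f_0(W(s)))}$, use the sine for the vertical part of $\D_W$, treat the residues separately---is the same as the paper's, but the central quantitative step goes wrong, and the conclusion you draw from it (that \eqref{expdecay} cannot hold as written and should be weakened to $e^{-cN}$ times something summable in $s$) is incorrect. The issue is the large-$s$ behaviour of $\Re(f_0(W(s)))$. By the very construction of the contours, $q^{W(s)}=q^\theta+e^{\I\varphi}s$, so $|q^{W(s)}|\asymp s\to\infty$ and $\Re W(s)=\log|q^\theta+e^{\I\varphi}s|/\log q\to-\infty$; your reading that $\Re W(s)$ grows positively while $|q^{W(s)}|$ becomes small is backwards (for the same reason $\log(q^{W};q)_\infty$ is not bounded on $\C_\varphi$, though it is subdominant, of order $(\log s)^2$). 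Hence the dominant term of $\Re f_0(W(s))$ is $\kappa\,\Re(q^{W(s)})\approx\kappa s\cos\varphi$, which grows \emph{linearly} in $s$, and this linear growth is exactly what produces $|e^{-Nf_0(W(s))}|\le e^{-cNs}$ with $c$ uniform for $\varphi$ bounded away from $\pi/2$. Your check via \eqref{f0W} confirms the wrong picture only because of an algebra slip: the $k$-th numerator is $q^{2k}s^2\bigl(-q^ks^2\cos\varphi+\cdots\bigr)\sim-q^{3k}s^4\cos\varphi$ (you dropped the outer factor $s^2$), while the denominator is $\asymp q^{2k}(1-q^{\theta+k})^2s^4$, so each term tends to $-q^k\cos\varphi/(1-q^{\theta+k})^2$ and $-\frac{\d}{\d s}\Re(f_0(W(s)))\to-\kappa\cos\varphi$ by \eqref{kappafseries}, not $\O(s^{-2})$. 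This limiting negative constant is precisely how the paper obtains the linear rate; no reformulation of the lemma is needed, and your fallback (that $|q^{W(s)}|^N\le e^{-cN}$) fails for the same reason, since $|q^{W(s)}|\approx s$ is large.

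Beyond this, the residue contributions---which occupy most of the paper's proof---are dismissed too quickly. The claim that at the poles $W(s)+j$ the value of $\Re f_0$ is at most $\Re f_0(\theta)$ plus a small error because $q^{W+j}$ has smaller modulus is unsubstantiated and is not the relevant comparison: since $|q^{W(s)+j}|=q^j|q^{W(s)}|\asymp s$, the value $\Re f_0(W(s)+j)$ itself still grows linearly in $s$, and what must be shown is that it falls short of $\Re f_0(W(s))$ by an amount of order $s$. The paper does this in two steps: it compares $W(s)+j$ with its vertical projection $W(s_j)$ onto $\C_\varphi$, where the steep descent estimate gives $\Re(f_0(W(s))-f_0(W(s_j)))\asymp s$, and then controls the variation of $\Re f_0$ along the vertical segment via \eqref{f0Z}, with a separate bounded-increase argument for the finitely many poles whose real part lies within a fixed distance of $\theta$ and for a possible last pole to the right of $\theta$; it also uses that the number of poles $k_W$ is only logarithmic in $s$. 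Your treatment of the vertical part of $\D_W$ (sine decay plus the steep descent of $\Re f_0$ along vertical lines from Lemma~\ref{lemma:steep}) is broadly in line with the paper, but without the corrected linear rate for $\Re f_0(W(s))$ and a genuine argument for the residues, the proof of \eqref{expdecay} is incomplete.
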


\begin{proof}
We use the form \eqref{Ktil3} for the kernel $K_x(W,W')$ and we investigate its behaviour as $W=W(s)$ for $s$ large.
By \eqref{f0W}, for any fixed $\varphi\in[\pi/4,\pi/2)$, the derivative $-\frac{\d}{\d s}\Re(f_0(W(s)))$ converges to a negative constant
\begin{equation}
-c_\varphi=-\cos\varphi\sum_{k=0}^\infty \frac{q^k}{(1-q^{\theta+k})^2}.
\end{equation}
The constant $c_\varphi$ is uniformly positive as long as $\varphi$ is bounded away from $\pi/2$.
The derivatives of $\Re(f_1(W(s)))$ and $\Re(f_2(W(s)))$ are also at most constants, but since their prefactor is a lower power of $N$, they are negligible if $N$ is large enough.

Let $s$ be large and take $W=W(s)$ with \eqref{Wparam}.
We rewrite the integral over $\D_W$ as the integral over the vertical line $\theta+\delta+\I\R$ and some residues coming from the sine function in the denominator.
The integrand in $Z$ has an exponential decay in $Z$ along the vertical line due to the sine in the denominator for any $W$.
Hence by the observations above, the integral over the vertical part of the $Z$ contour can be bounded by $\exp(-cNs)$ for some $c>0$ and for $N$ and $s$ large enough.
Therefore it is enough to prove the same for the residues in order to establish \eqref{expdecay}.

We prove in what follow that the residues are each at most $\exp(-cNs)$, however the number of residues is only logarithmic in $s$.
The positions of the poles corresponding to $W(s)$ are $W(s)+1,\dots,W(s)+k_W$ where $k_W$ is the number of the poles (which might also be $0$).
Let us choose $s_j>0$ for $j=1,\dots,k_W$ such that $\Re W(s_j)=\Re W(s)+j$, i.e., $W(s_j)$ is the vertical projection of the $j$th pole to $\C_\varphi$, see Figure~\ref{fig:residue_approach}.
The strategy is to approach the residue at $W(s)+j$ in two steps: first we go along the contour $\C_\varphi$ until $W(s_j)$, then we proceed along a vertical line to the pole.
This enables us to use the information about $\Re(f_0)$ given in the proof of Lemma~\ref{lemma:steep}.

\begin{figure}
\begin{center}
\def\svgwidth{180pt}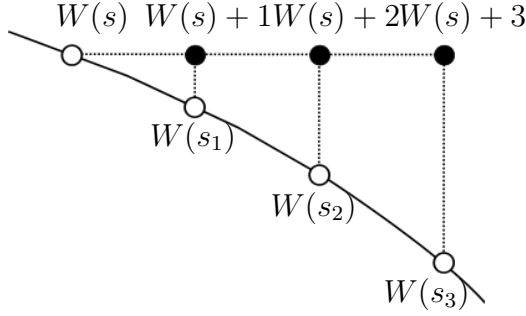
\end{center}
\caption{For a $W(s)\in\C_\varphi$, the poles are at positions $W(s)+1,W(s)+2,\dots$,
the points $W(s_1),W(s_2),\dots$ are the vertical projections of the positions of the poles to $\C_\varphi$.\label{fig:residue_approach}}
\end{figure}

We now show that $\Re(f_0(W(s))-f_0(W(s_1)))\asymp s$ where $\asymp$ means that the ratio of the two sides goes to a non-trivial constant as $s\to\infty$.
By the definition of $s_1$, we have $|q^{W(s)}|=q^{-1}|q^{W(s_1)}|$ where $q^{W(s)}$ is the image of of $W(s)$ on the left-hand side of Figure~\ref{fig:contours}.
Since the contour $\wt\C_\varphi$ is parameterized by arc length, for large $s$, $|q^{W(s)}|\asymp s$ and $|q^{W(s_1)}|\asymp s_1$, that is, $s-s_1\asymp s$.
Since for large $s$, $\frac{\d}{\d s}\Re(f_0(W(s)))\asymp-c_\varphi$, we also get that $\Re(f_0(W(s))-f_0(W(s_1)))\asymp s$.

Next we show that $\Re(f_0(W(s)+1))$ is even smaller than $\Re f_0(W(s_1))$.
For this end, we consider the function $\Re(f_0)$ along a vertical line, so we can use the derivative \eqref{f0Z} computed in the proof of Lemma~\ref{lemma:steep}.
There, it was used for $\gamma\ge0$, but if $s$ is large in our case, $\gamma$ is negative with large absolute value, hence the derivative of $\Re(f_0)$ is non-positive.
Therefore the contribution of the first residue is exponentially small.

The first part of this argument, i.e., $\Re(f_0(W(s))-f_0(W(s_j)))\asymp s$ remains true for all the poles $j=1,\dots,k_W$.
But there is a finite integer $L$ which does not depend on $s$ or $N$ such that the second half of the argument (that is, $\Re(f_0(W(s)+j)-f_0(W(s_j)))\le0$) works for all but the last $L$ poles.
The existence of such an $L$ is simply because for a fixed $q$ and $\theta$,
the sum on the right-hand side of \eqref{f0Z} is positive uniformly in $t\in\R$ if $\gamma$ is smaller than a fixed negative constant.
For those poles which have real part $\Re W(s)+j$ in the $L$ neighbourhood of $\theta$, $\Re(f_0(W(s)+j))$ might not be smaller than $\Re(f_0(W(s_j)))$,
but as we will see that the difference can be upper bounded by a fixed amount.
We again consider the derivative of $\Re(f_0)$ along a vertical line as in \eqref{f0Z}, but now in the domain $G_\varphi$
where $G_\varphi$ is $\{W:\Im(W)\in[\pi/\log q,-\pi/\log q]\}$ without the domain on the left-hand side of $\C_\varphi$ seen in the direction of its orientation.

It is enough to show that the derivative of $\Re(f_0)$ along a vertical line can be uniformly upper bounded on the domain $G_\varphi$,
since the distance of $W(s_j)$ and $W(s)+j$ is at most $2\pi/|\log q|$, the vertical period of $\Re(f_0)$.
In terms of Figure~\ref{fig:contours}, the image of the domain $G_\varphi$ on the left-hand side of the figure is on the left-hand side of the contour $\wt\C_\varphi$
which has a uniformly positive distance from $1$, that is, the difference in parentheses on the right-hand side of \eqref{f0Z} can be uniformly upper bounded.
Therefore we get that the difference $\Re(f_0(W(s)+j)-f_0(W(s_j)))$ is at most a bounded amount.

It may also happen that the last residue $W(s)+k_W$ on the left-hand side of the vertical part of $\D_W$
(which is at $\theta+\sigma+\I\R$ or $\theta+2\sigma+\I\R$, but $\sigma$ is as small as we want) has a larger real part than $\theta$.
In this case, we approach it from $W(s)$ as follows.
We first proceed to $\theta$ along $\C_\varphi$ where $\Re(f_0)$ decreases.
Then we continue on a short horizontal line which is not longer than $2\sigma$ until $\Re W(s)+k_W$, the derivative of $\Re(f_0)$ \eqref{f0'series} is bounded here.
Finally we go vertically to $W(s)+k_W$ where $\Re(f_0)$ is again non-increasing by the steep descent property.
Also in this case, $\Re(f_0)$ can only increase by a bounded amount.
Therefore if $s$ is large enough, $\Re(f_0(W(s_j))-f_0(W(s)))$ dominates and we get exponential decay of the kernel $K_x(W(s),W')$ in $N$ and $s$ as stated.
\end{proof}

\begin{remark}
In order to control the values of $\Re(f_0)$ at the residues, we follow the contour $\C_\varphi$ first and then go along a vertical line,
since the derivative of the function along a vertical line given in \eqref{f0Z} is relatively simple.
It could be a natural choice to look at the derivative of the function along a horizontal line, but unlike in~\cite{BCF12}, the function is not monotonic along these lines,
hence we found it more difficult to analyse.
\end{remark}

The proof of Proposition~\ref{prop:kerneldeformation} now easily follow from Lemma~\ref{lemma:expdecay}.

\begin{proof}[Proof of Proposition~\ref{prop:kerneldeformation}]
We give an integrable bound \eqref{kernelbound} on the kernel $K_x$ along the contour $\C_\varphi$.
Let $N$ be large and fixed such Lemma~\ref{lemma:expdecay} ensures that \eqref{expdecay} holds uniformly in $|s|\ge s_0$ for some threshold $s_0$.
If $s$ varies in the compact interval $|s|\le s_0$, the kernel $K(W(s),W')$ diverges logarithmically in the neighbourhood of $s=0$, but it is bounded otherwise.
The behaviour in the second variable of the kernel $K_x$ is similar, therefore we have
\begin{equation}\label{kernelbound}
|K_x(W(s),W(s'))|\le Ce^{-cNs}+C(\log|s|)_-(\log|s'|)_-.
\end{equation}
Hence the series expansion of the Fredholm determinant can be uniformly bounded by a convergent series as $\varphi$ varies in any given closed subinterval of $(0,\pi/2)$.
Therefore, by the dominated convergence theorem, one can perform the contour deformation in each term of the series expansion of the determinant yielding the validity of the contour deformation in the determinant.
\end{proof}

\begin{lemma}\label{lemma:residues}
For any $W\in\C_\varphi$, let $W+j$ be the positions of poles of the sine from which there is a residue contribution for $j=1,\dots,k_W$.
The angle $\varphi$ can be chosen so close to $\pi/2$ that $\Re(f_0(W)-f_0(W+j))\ge\varepsilon_\varphi$ for all $j=1,\dots,k_W$ with some $\varepsilon_\varphi>0$ which does not depend on $W$.
\end{lemma}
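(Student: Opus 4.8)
The plan is to exploit the two pieces of information already assembled in the proof of Lemma~\ref{lemma:expdecay}: that moving along $\C_\varphi$ strictly decreases $\Re(f_0)$ (steep descent), and that moving along a vertical line in the region to the left of $\C_\varphi$ the derivative of $\Re(f_0)$ has a favourable sign. For a fixed $W\in\C_\varphi$, whose position depends on the arc-length parameter $s$ as in \eqref{Wparam}, the poles of the sine contributing residues sit at $W+1,\dots,W+k_W$. As before, I would let $W(s_j)$ denote the vertical projection of $W+j$ back onto $\C_\varphi$, so that $\Re W(s_j)=\Re W+j$, and split the estimate as
\begin{equation*}
\Re(f_0(W)-f_0(W+j))=\Re(f_0(W)-f_0(W(s_j)))+\Re(f_0(W(s_j))-f_0(W+j)).
\end{equation*}
The second difference is controlled by integrating \eqref{f0Z} along the vertical segment from $W(s_j)$ up (or down) to $W+j$: for points strictly to the left of $\C_\varphi$ the quantity in parentheses in \eqref{f0Z} is non-negative, so this difference is $\le 0$, except possibly for the finitely many poles whose real part lands within a fixed distance $L$ of $\theta$, where it is still bounded below by a (possibly negative) constant. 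So the whole burden falls on the first difference $\Re(f_0(W)-f_0(W(s_j)))$.

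The key quantitative point is that this first difference is \emph{uniformly} positive and can be made large by pushing $\varphi\to\pi/2$. On the left-hand side of Figure~\ref{fig:contours} the points $q^{W}$ and $q^{W(s_j)}$ lie on $\wt\C_\varphi$ with $|q^{W(s_j)}|=q^{-j}|q^W|\ge q^{-1}|q^W|$, so the arc-length separation of $W$ and $W(s_j)$ along $\C_\varphi$ is bounded below by a fixed amount (it does not shrink to zero, since $j\ge1$). Combining this with the lower bound on $-\frac{\d}{\d s}\Re(f_0(W(s)))$ coming from \eqref{f0W}, I would obtain $\Re(f_0(W)-f_0(W(s_j)))\ge c_\varphi'$ for a constant $c_\varphi'>0$ independent of $W$ and of $j$; and inspecting \eqref{f0W} or its limiting constant $c_\varphi=\cos\varphi\sum_k q^k/(1-q^{\theta+k})^2$ shows $c_\varphi\to 0$ as $\varphi\to\pi/2$ but is \emph{bounded away from $0$ uniformly in the separation} once we track the full integrated decrease — more precisely, the integrated drop over a segment of fixed arc length is itself bounded below by a positive quantity that, crucially, does \emph{not} vanish as $\varphi\to\pi/2$ because it is dominated by the $\cos3\varphi$ and $\cos2\varphi$ terms near $s$ of order $1$. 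Choosing $\varphi$ close enough to $\pi/2$ then makes this integrated drop exceed the bounded deficit incurred by the $O(L)$ exceptional poles, yielding the claimed $\varepsilon_\varphi>0$.

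The main obstacle is precisely this last bookkeeping: showing that the gain $\Re(f_0(W)-f_0(W(s_j)))$ from travelling along $\C_\varphi$ genuinely beats the loss $\Re(f_0(W(s_j))-f_0(W+j))$ that can occur for the finitely many poles close to $\theta$, \emph{uniformly in $W$ and uniformly in $j$}, and that the favourable balance is achieved by a single choice of $\varphi$ (rather than a $W$- or $j$-dependent one). The delicate case is $W$ near $\theta$ itself, where the gain term is small (it is cubic in the separation by \eqref{f0Taylor}) while $W+1$ may still be near $\theta$; here I would argue that for $W$ in a fixed neighbourhood of $\theta$ the pole $W+1$ already has real part $\ge\theta+1/2$ or so, placing it well to the left of $\C_\varphi$ where \eqref{f0Z} gives a strict decrease, so the cubic smallness of the first term is irrelevant. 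Away from $\theta$ the first term is linear in $s$ and dominates trivially. Assembling these cases and extracting one $\varepsilon_\varphi$ is routine once the geometry of $\C_\varphi$ versus the vertical shifts is pinned down, which is exactly what the proof of Lemma~\ref{lemma:expdecay} and Lemma~\ref{lemma:steep} already provide.
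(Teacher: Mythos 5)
Your decomposition of $\Re(f_0(W)-f_0(W+j))$ into a piece along $\C_\varphi$ (from $W$ to the projection $W(s_j)$) and a vertical piece (from $W(s_j)$ to $W+j$) is the same as the paper's, and your treatment of $W$ far from $\theta$ and of the exceptional pole whose real part exceeds $\theta$ is fine. But the heart of the lemma — a uniform $\varepsilon_\varphi>0$ for $W$ in a compact neighbourhood of $\theta$, where the troublesome poles have real part within a bounded distance to the \emph{left} of $\theta$ — is where your argument has a genuine gap. You concede that on these vertical segments $\Re(f_0(W(s_j))-f_0(W+j))$ may be a loss, bounded below only by some negative constant, and you propose to beat this loss by the gain along $\C_\varphi$, ``choosing $\varphi$ close enough to $\pi/2$.'' There is no mechanism behind this: the loss is not shown to shrink as $\varphi\to\pi/2$, and the contour gain is not shown to grow (you yourself note $c_\varphi\to0$; at best the integrated drop over a fixed arc is bounded below \emph{independently} of $\varphi$). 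Comparing two $\varphi$-independent constants cannot be decided by sending $\varphi\to\pi/2$, so the claimed $\varepsilon_\varphi$ does not follow. Your fallback for $W$ near $\theta$ (that $W+1$ then has real part $\ge\theta+1/2$ and lies where \eqref{f0Z} decreases) does not address the relevant case either: poles with real part larger than $\theta+\sigma$ produce no residue at all, and the dangerous poles are precisely those with real part slightly below $\theta$, where the sign of \eqref{f0Z} is not automatic.

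The paper assigns the roles the other way around, and this is what the hypothesis ``$\varphi$ close to $\pi/2$'' is actually for. The contour piece is only used with its sign, $\Re(f_0(W)-f_0(W_j))>0$ (steep descent); the uniform gap $\varepsilon_\varphi$ is extracted from the \emph{vertical} piece. First, there is a horizontal strip around the real axis (a sector $\{\arg w\in[-\psi,\psi]\}$ in the $w$-plane) in which $k_W=0$, so it can be discarded. Outside this sector, one shows that for $\varphi$ close enough to $\pi/2$ the region to the left of $q^k\wt\C_\varphi$ keeps a positive distance from the circle of radius $1-q^{\theta+k}$ centred at $1$; by the criterion \eqref{geoineq} this makes the $k$-th summand in \eqref{f0Z} strictly negative for $k=0,\dots,K$, and the exponentially small tail is absorbed by taking $K$ large. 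Hence the derivative of $\Re(f_0)$ along the relevant vertical lines is uniformly negative, i.e.\ there is no loss at all on the vertical segments; combined with a uniform lower bound on the distances $|W+j-W_j|$ for $W$ in a compact neighbourhood of $\theta$, this yields $\Re(f_0(W_j)-f_0(W+j))\ge\varepsilon_\varphi$, and the last-pole case only costs $\O(\sigma)$. This geometric step (Figure~\ref{fig:intersection}) is the missing idea in your proposal. (A minor slip besides: the projection satisfies $|q^{W(s_j)}|=q^{j}|q^{W}|$, not $q^{-j}|q^{W}|$, though the lower bound on the separation of $W$ and $W(s_j)$ survives since their real parts differ by $j\ge1$.)
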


\begin{proof}
The strategy is similar to the proof of Lemma~\ref{lemma:expdecay}.
For a fixed \mbox{$W\in\C_\varphi$}, let \mbox{$W_j\in\C_\varphi$} be the unique point on the same side of the real axis such that \mbox{$\Re W_j=\Re W+j$} for $j=1,\dots,k_W$.
As in the proof of Lemma~\ref{lemma:expdecay}, if $W$ is far enough from $\theta$ along $\C_\varphi$,
then the decay along $\C_\varphi$ is much larger than the possible increase along the vertical line which is proved to be at most bounded,
hence $\Re(f_0(W)-f_0(W+j))\ge\varepsilon_\varphi$ holds for all $j=1,\dots,k_W$.

So it is enough to prove the lemma for $W$ in a compact neighbourhood of $\theta$.
In this compact set, the vertical distances $|W+j-W_j|$ can be uniformly lower bounded.
By the steep descent property stated in Lemma~\ref{lemma:steep}, $\Re(f_0(W)-f_0(W_j))>0$.
In what follows, we choose $\varphi$ such that also
\begin{equation}\label{vertdiff}
\Re(f_0(W_j)-f_0(W+j))\ge\varepsilon_\varphi>0
\end{equation}
which is enough to complete the proof.
There is a horizontal strip around the real axis such that if $\varphi\in[\pi/4,\pi/2]$, then there is no residue term for $W\in\C_\varphi$ in this strip, i.e., $k_W=0$,
because the position of the first pole at $W+1$ is already on the right-hand side of $D_W$.
This strip on the right-hand side of Figure~\ref{fig:contours} corresponds to a sector $\{w:\arg(w)\in[-\psi,\psi]\}$ on the left-hand side for some angle $\psi>0$
which can be chosen uniformly for $\varphi\in[\pi/4,\pi/2]$ (see Figure~\ref{fig:intersection}).
So it is enough to prove the lemma for the complement of the sector.

Since $W_j$ and $W+j$ are on the same vertical line $t\mapsto\theta+\gamma+\I t$ in the complex plane for some $\gamma<0$, in order to show \eqref{vertdiff},
we can use again the formula \eqref{f0Z} for the derivative of $\Re(f_0)$ along a vertical line.
For $t>0$, the summands in the sum on the right-hand side of \eqref{f0Z} are negative if
\begin{equation}\label{geoineq}
|1-q^{\theta+k}|<|1-q^{\theta+k}q^{\gamma+\I t}|.
\end{equation}
This is equivalent to $q^{\theta+k}q^{\gamma+\I t}$ being outside the circle of radius $1-q^{\theta+k}$ around $1$.
Let $S^k_\varphi$ be the domain on the left-hand side of $q^k\wt\C_\varphi$ intersected with the complement of the sector $\{w:\arg(w)\in[-\psi,\psi]\}$.
As illustrated on Figure~\ref{fig:intersection}, $\varphi$ can be chosen so close to $\pi/2$ that $S^k_\varphi$ has a positive distance from the circle of radius $1-q^{\theta+k}$ around $1$.
It makes it possible to ensure that the $k$th summand in \eqref{f0Z} is negative for $k=0,\dots,K$ and that they add up to something less than $-\epsilon$ with $\epsilon>0$.
By the exponential decay in the sum on the right-hand side of \eqref{f0Z},
we can find $K$ large enough that makes the tail smaller than $\epsilon/2$ in absolute value which ensure that the whole sum is uniformly negative.
(Increasing $K$ might require another $\varphi$ which is closer to $\pi/2$, but with this the sum of the first finitely many negative terms will not increase.)

Since the vertical distances have a lower bound, the argument gives a lower bound on $\Re(f_0(W_j)-f_0(W+j))$ which is independent of $W$.
The case when the position of the last pole has larger real part than $\theta$ can be similarly handled as in the proof of Lemma~\ref{lemma:expdecay},
since the difference can be at most $2\sigma$ which can be made arbitrarily small.
This completes the proof.

\begin{figure}
\begin{center}
\def\svgwidth{220pt}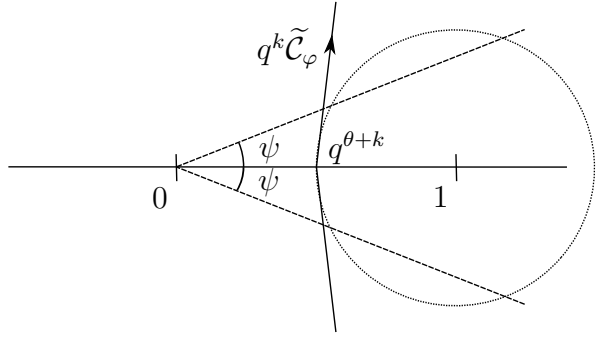
\end{center}
\caption{The angle $\varphi$ can be chosen so close to $\pi/2$ that the domain on the left-hand side of $q^k\wt\C_\varphi$
without the sector $\{w\in\wt\C_\varphi:\arg(w)\in[-\psi,\psi]\}$ does not intersect the circle of radius $1-q^{\theta+k}$ around $1$.
\label{fig:intersection}}
\end{figure}
\end{proof}

\begin{proof}[Proof of Proposition~\ref{prop:localization}]
We consider the Fredholm determinant expansion
\begin{equation}\label{Fredholmseries}
\det(\id-K_x)_{L^2(\C_\varphi)}=\sum_{k=0}^\infty \frac{(-1)^k}{k!} \int_\R\d s_1\dots\int_\R\d s_k \det\left(K_x(W(s_i),W(s_j))\frac{\d}{\d s_i}W(s_i)\right)_{i,j=1}^k.
\end{equation}
We use the bound \eqref{kernelbound} as in the proof of Proposition~\ref{prop:kerneldeformation} which is integrable and summable.
Hence the above Fredholm determinant is well-defined.
As $N\to\infty$, the kernel $K_x(W,W')$ converges to $0$ for $W\in\C_\varphi\setminus\C_\varphi^\delta$ also because of Lemma~\ref{lemma:residues}.
Hence if we replace all the integrations in \eqref{Fredholmseries} on $\R$ by integrals on a fixed neighbourhood of $0$, then the error what we make goes to $0$ by dominated convergence.

On the other hand, by a similar argument, one can localize the $Z$ integrals as well.
By linearity, one can take out the $Z$ integrations from the determinant in \eqref{Fredholmseries} to obtain a sum where the $k$th term is a $2k$-fold integration.
It is still integrable, since the behaviour in the $Z$ variables is $e^{-\pi\Im(Z)}$ due to the sine in the denominator.
The function $\Re(f_0)$ is steep descent along vertical lines by Lemma~\ref{lemma:steep}, and if we choose $\sigma$ small enough,
then the steep descent property remains valid along $\D_W$ by Taylor expansion.
In a similar way as for the $W$ variable, the $Z$ integration can also be localized.
The steep descent property and the periodicity implies that by making a small error we can restrict the $Z$ integral to the set $\cup_{k\in\Z}I_k$ where
$I_k=\D_W\cap\{z:|z-(\theta+\I k2\pi/\log q)|\le\delta\}$, in particular, $I_0=\D_\varphi^\delta$.

The integral over $\cup_{k\in\Z}I_k$ reduces to the integral over $I_0$ in the limit, because if $Z\in I_k$ for $k\neq0$, then
\begin{equation}
\frac{N^{-1/3}}{\sin(\pi(W-Z))}\asymp N^{-1/3}e^{-\pi|\Im Z|}\asymp N^{-1/3}e^{-\frac{2\pi^2}{|\log q|}|k|}
\end{equation}
which is summable and gives an $\O(N^{-1/3})$ error uniformly in $W$.
For $Z\in I_0$, we are in the neighbourhood of the pole of the sine inverse, so the integral on $I_0$ is not negligible. This completes the proof.
\end{proof}

\subsection{Convergence of the kernel}

\begin{proof}[Proof of Proposition~\ref{prop:kernelconv}]
We apply the change of variables
\begin{equation}\label{localchov}
W=\theta+wN^{-1/3},\quad W'=\theta+w'N^{-1/3},\quad Z=\theta+zN^{-1/3}
\end{equation}
as in \eqref{defKxd} and use the Taylor expansions \eqref{f0Taylor}--\eqref{f2Taylor} to see that $K_{x,\delta}^N(w,w')$ gets arbitrarily close to $K_{x,\delta N^{1/3}}'(w,w')$
for any $w,w'\in\C_{\varphi,\delta N^{1/3}}$ as $N\to\infty$.

In order to get that the Fredholm determinants are also close, we need a uniform fast decaying bound on $K_{x,\delta}^N(w,w')$.
The main term in the exponent is
\begin{equation}
-N\Re f_0(W)=-\chi\Re\frac{w^3}3+\O_\epsilon({N^{-1/3}}w^4)=-\chi\Re\frac{w^3}3+\O_\epsilon(w^3)
\end{equation}
where $\O_\epsilon(w^3)$ means that the error term is at most $\epsilon w^3$.
By taking $\delta$ small enough, $\epsilon$ can be arbitrarily small, hence the error term is negligible compared to the cubic behaviour of $-\chi\Re w^3/3$.
The lower order error terms are similarly dominated.
Hence the difference of the Fredholm determinants goes to $0$ as $N\to\infty$ by dominated convergence, which proves the proposition.
\end{proof}

\begin{proof}[Proof of Proposition~\ref{prop:kernelextend}]
Since the integrand in \eqref{kernelK'} has cubic exponential decay in $w$ and $z$ along the given contours $\C_{\varphi,\infty}$ and $\D_{\varphi,\infty}$ respectively,
the convergence of the Fredholm determinants follows by dominated convergence similarly to the earlier proofs.
\end{proof}

\subsection{Reformulation of the kernel}

\begin{proof}[Proof of Proposition~\ref{prop:rewritekernel}]
By definition \eqref{kernelK'}, we can write the kernel $K_{x,\infty}'$ on $L^2(\C_{\varphi,\infty})$ as
\begin{equation}\label{kernelAB}
K_{x,\infty}'(w,w')=(AB)(w,w')
\end{equation}
where
\begin{align}
A(w,\lambda)&=e^{-\chi w^3/3-c(\log q)^2w^2/2-\beta_x w+\lambda w}\\
B(\lambda,w')&=\frac1{2\pi\I}\int_{\D_{\varphi,\infty}}\frac{\d z}{z-w'}e^{\chi z^3/3+c(\log q)^2z^2/2+\beta_x z-\lambda z}
\end{align}
with $\lambda\in\R_+$ and the composition $AB$ on the right-hand side of \eqref{kernelAB} is also meant on $L^2(\R_+)$.
The equality \eqref{kernelAB} can be seen since
\[\frac1{z-w}=\int_0^\infty\d\lambda\,e^{-\lambda(z-w)}\]
as long as $\Re(z-w)>0$.

Hence we can write
\begin{equation}
\det(\id-K_{x,\infty}')_{L^2(\C_{\varphi,\infty})}=\det(\id-AB)_{L^2(\C_{\varphi,\infty})}=\det(\id-BA)_{L^2(\R_+)}
\end{equation}
with
\begin{equation}
(BA)(a,b)=\int_0^\infty\d\lambda\frac1{(2\pi\I)^2}\int_{\C_{\varphi,\infty}}\d w\int_{\D_{\varphi,\infty}}\d z
\frac{e^{\chi z^3/3+c(\log q)^2z^2/2+(\beta_x-a-\lambda)z}}{e^{\chi w^3/3+c(\log q)^2w^2/2+(\beta_x-b-\lambda)w}}.
\end{equation}
Using the general formula
\begin{equation}
\frac1{2\pi\I}\int_{\D_{\varphi,\infty}}\exp\left(a\frac{z^3}3+bz^2+cz\right)\d z=a^{-1/3}\exp\left(\frac{2b^3}{3a^2}-\frac{bc}a\right)\Ai\left(\frac{b^2}{a^{4/3}}-\frac c{a^{1/3}}\right),
\end{equation}
we get that
\begin{equation}
(BA)(a,b)=\chi^{-1/3}e^{c(\log q)^2(a-b)/(2\chi)}\int_0^\infty\d\lambda\Ai\left(\frac a{\chi^{1/3}}+x+\lambda\right)\Ai\left(\frac b{\chi^{1/3}}+x+\lambda\right)
\end{equation}
by \eqref{defbeta} and after a change of variable.
After conjugation by the exponential prefactor and by rescaling the Fredholm determinant, we get that
\begin{equation}
\det(\id-BA)_{L^2(\R_+)}=\det(\id-K_{\Ai,x})_{L^2(\R_+)}=F_{\rm GUE}(x)
\end{equation}
as required.
\end{proof}

With the proof of Proposition~\ref{prop:rewritekernel}, we also conclude Theorem~\ref{thm:Fredholmconv} which is the key for the main result of the present paper.
We add a final remark on an alternative way of proving Theorem~\ref{thm:Fredholmconv}.

\begin{remark}
There is a slightly different way of proving Theorem~\ref{thm:Fredholmconv}.
Instead of choosing $\varphi$ to be very close to $\pi/2$, it is also possible to take $\varphi=\pi/2$.
This alternative requires about the same complexity of the proof as the way followed here and it yields the following differences.
The limit of the contour $\C_\varphi$ as one rescales the neighbourhood of $\theta$ is $\I\R$,
so the contour $\C_\varphi$ has to be locally modified to another steep descent contour before the localization of the integral in Proposition~\ref{prop:localization}.
This needs more argument.
The decay of the kernel $K_x$ along $\C_{\pi/2}$ in Lemma~\ref{lemma:expdecay} is not exponential but polynomial in $s$,
but the power is proportional to $N$ which is still enough decay for the rest of the proof.
The argument in the proof of Lemma~\ref{lemma:residues} is slightly simpler.
\end{remark}

\section{Confirmation of KPZ scaling theory for \mbox{$q$-TASEP}}\label{s:scalingtheory}

This section contains the argument which shows that the KPZ scaling theory conjecture formulated in Conjecture~\ref{conj:KPZ} is satisfied for \mbox{$q$-TASEP}.
\begin{proof}[Proof of Theorem~\ref{thm:scalingtheory}]
We start with \eqref{currentrandomin} for $c=0$ which reads
\begin{equation}\label{currentrandomin2}
H\left(\tau,\frac{f-1}\kappa \tau+\frac{\chi^{1/3}}{\kappa^{1/3}\log q}\xi_\tau \tau^{1/3}\right)=\frac\tau\kappa
\end{equation}
where $\xi_\tau$ has asymptotically Tracy--Widom distribution.
In \eqref{currentrandomin2}, $(f-1)/\kappa$ and $1/\kappa$, the coefficients of $\tau$ are functions of $\theta$.
By modifying $\theta$ on the $\tau^{-2/3}$ scale, we can choose $\theta'$ such that the corresponding $(f-1)/\kappa$ is equal to the second argument of $H$ in \eqref{currentrandomin2}.
Then the right-hand side of \eqref{currentrandomin2} changes by Taylor expansion.
By using the series representation of $\kappa$ and $f$ given in \eqref{kappafseries}, one can see that
\begin{equation}\label{implicitderivative}
\frac\d{\d\theta}\frac{f-1}\kappa\left(\frac\d{\d\theta}\frac1\kappa\right)^{-1}=-(q^\theta\kappa-f+1).
\end{equation}
The first part of the theorem now follows by Taylor expansion.

Next we show that the scaling KPZ conjecture is satisfied.
The \mbox{$q$-TASEP} in~\cite{Spo13a} is defined such that particles are initially on the non-negative integer positions and they all try to jump to the left with the corresponding rates.
Therefore, the particle current $H$ defined in Remark~\ref{rem:current} and the height function $h$ used in~\cite{Spo13a} are related via $h(y,t)=2H(t,-y-1)+y$ (cf.~\eqref{hHrelation}), hence \eqref{currentrandomout} reads
\begin{equation}\label{currentSpohn}\begin{aligned}
h\left(-\frac{f-1}\kappa\tau,\tau\right)&=2H\left(\tau,-\left(-\frac{f-1}\kappa\right)\tau-1\right)+\frac{f-1}\kappa\tau\\
&=\left(\frac2\kappa+\frac{f-1}\kappa\right)\tau+\frac2{q^\theta\kappa-f+1}\frac{\chi^{1/3}}{\kappa^{1/3}\log q}\xi_\tau\tau^{1/3}.
\end{aligned}\end{equation}
The profile function is
\begin{equation}
\phi:-\frac{f-1}\kappa\mapsto\frac2\kappa+\frac{f-1}\kappa.
\end{equation}
Its derivative determines $\varrho$ in~\cite{Spo13a} that is an affine transform of the particle density given in Proposition~\ref{prop:lln}.
One has
\begin{equation}\label{rho1}
1+\varrho=1+\phi'=\frac2{q^\theta\kappa-f+1}=\frac{2\log q}{\log q+\log(1-q)+\Psi_q(\theta)}
\end{equation}
using \eqref{implicitderivative} in the second equality and \eqref{defkappa}--\eqref{deff} in the last one.
On the other hand, (2.21) in~\cite{Spo13a} yields
\begin{equation}\label{rho2}
1+\varrho=\frac2{1-\alpha\frac\d{\d\alpha}\log(\alpha;q)_\infty}=\frac2{1+\sum_{m=0}^\infty\frac{\alpha q^m}{1-\alpha q^m}}=\frac{2\log q}{\log q+\log(1-q)+\Psi_q(\log_q\alpha)}
\end{equation}
which means that the parameter of the stationary measure is determined via
\begin{equation}\label{alphatheta}
\alpha=q^\theta.
\end{equation}
This choice of parameters is in accordance with Proposition~\ref{prop:lln} and with the Legendre transform representation of $\phi$ given also in (2.10) of~\cite{Spo13a}.

Formulas (2.22)--(2.23) in~\cite{Spo13a} give that
\begin{equation}\label{jAqtasep}
j(\varrho)=-\alpha(1+\varrho),\qquad A(\varrho)=-\alpha(1+\varrho)\frac{\d\varrho}{\d\alpha}
\end{equation}
for \mbox{$q$-TASEP}.
By computing the first two derivatives of the inverse function $\alpha\mapsto\varrho$ given in \eqref{rho2}, one can see that
\begin{equation}\label{lambdacalc}
\lambda=-\frac{\d^2}{\d\varrho^2}j(\varrho)=\frac{q^\theta(\Psi_q'(\theta)\log q-\Psi_q''(\theta))(\log q+\log(1-q)+\Psi_q(\theta))^3}{2(\Psi_q'(\theta))^3}
\end{equation}
with the notation \eqref{alphatheta}.
By similar calculations for the second quantity in \eqref{jAqtasep}, we have
\begin{equation}\label{Acalc}
A=\frac{4\Psi_q'(\theta)\log q}{(\log q+\log(1-q)+\Psi_q(\theta))^3}.
\end{equation}
Putting \eqref{lambdacalc} and \eqref{Acalc} together, the coefficient of $st^{1/3}$ on the right-hand side of the inequality sign in \eqref{scalingtheory} is given by
\begin{equation}\label{coeff}
-(-{\textstyle\frac12}\lambda A^2)^{1/3}=\frac{2^{2/3}q^{\theta/3}((\log q)^2)^{1/3}(\Psi_q'(\theta)\log q-\Psi_q''(\theta))^{1/3}}{(\log q+\log(1-q)+\Psi_q(\theta))(\Psi_q'(\theta))^{1/3}}
\end{equation}
where $((\log q)^2)^{1/3}$ is understood as a positive number.
Note that this is the same as the coefficient of the $\xi_\tau\tau^{1/3}$ random term on the right-hand side of \eqref{currentSpohn}, which completes the proof since the coefficient in \eqref{coeff} is negative.
\end{proof}

\begin{proof}[Proof of Corollary~\ref{cor:density}]
By Remark~\ref{rem:techcond}, Theorem~\ref{thm:scalingtheory} certainly holds and confirms the law of large numbers \eqref{lln} for all $\theta$ for which $(f-1)/\kappa\in(-(1-q),0)$.
It yields that the value of the particle density and the current given in \eqref{rhojrho} follows by differentiation \eqref{implicitderivative} and by using \eqref{kappafseries}.
\end{proof}

\section*{Acknowledgements}
The authors thank Ivan Corwin for his valuable comments on the earlier version of the paper and for drawing their attention to the paper \cite{Spo13a}.
The work of P.L.\ Ferrari is supported by the German Research Foundation via the SFB 1060--B04 project.
B.\ Vet\H o is grateful for the generous support of the Humboldt Research Fellowship for Postdoctoral Researchers during his stay at the University
of Bonn and for the Postdoctoral Fellowship of the Hungarian Academy of Sciences.
His work is partially supported by OTKA (Hungarian National Research Fund) grant K100473.

\end{document}